\documentclass[11pt]{amsart}

\usepackage{amsmath}
\usepackage{amssymb}
\usepackage{amsthm}
\usepackage[mathscr]{eucal}
\usepackage{mathrsfs}
\usepackage{psfrag}
\usepackage{fullpage}
\usepackage{color}
\usepackage{eucal}
\usepackage[dvips]{graphicx}
\usepackage{amsfonts}%
\usepackage{amsaddr}
\setcounter{MaxMatrixCols}{30}

\setcounter{totalnumber}{1}

\pdfoptionpdfinclusionerrorlevel=0
\pdfoptionpdfminorversion=5

\providecommand{\U}[1]{\protect\rule{.1in}{.1in}}

\newtheorem{proposition}{Proposition}[section]
\newtheorem{theorem}[proposition]{Theorem}

\newtheorem{lemma}[proposition]{Lemma}

\newtheorem{definition}[proposition]{Definition}

\newtheorem{example}[proposition]{Example}

\newtheorem{condition}[proposition]{Condition}

\numberwithin{equation}{section}
\numberwithin{proposition}{section}

\begin{document}
\title{Quenched Large Deviations for Multiscale Diffusion Processes in Random Environments}

\author{Konstantinos Spiliopoulos}
\address{Department of Mathematics \& Statistics\\
Boston University\\
Boston, MA 02215}
\email{kspiliop@math.bu.edu}

\date{\today}

\begin{abstract}
We consider multiple time scales systems of stochastic differential equations with small noise in random environments. We prove a quenched large deviations
 principle with explicit characterization of the action functional. The random medium is assumed to be stationary and ergodic. In the course of the proof we also
 prove related quenched ergodic theorems for controlled diffusion processes in random environments that are of independent interest. The proof relies entirely on
 probabilistic arguments, allowing to obtain detailed information on how the rare event occurs. We derive a  control, equivalently a change of measure,
 that leads to the  large deviations lower bound. This information on the change of measure can motivate the design of asymptotically efficient
 Monte Carlo importance sampling schemes for multiscale systems in random environments.
\end{abstract}

\keywords{Keywords: Large deviations, multiscale diffusions, random coefficients, quenched
homogenization}

\subjclass{60F10$\cdot$60F99$\cdot$60G17$\cdot$60J60}


\maketitle

\section{Introduction}

\label{S:Intro}

Let $0<\varepsilon,\delta\ll 1$ and consider the process $\left(X^{\epsilon}, Y^{\epsilon}\right)=\left\{\left(X^{\epsilon}_{t}, Y^{\epsilon}_{t}\right), t\in[0,T]\right\}$ taking values in the space $\mathbb{R}^{m}\times\mathbb{R}^{d-m}$ that satisfies the system of stochastic differential equation (SDE's)

\begin{eqnarray}
dX^{\epsilon}_{t}&=&\left[  \frac{\epsilon}{\delta}b\left(Y^{\epsilon}_{t},\gamma\right)+c\left(  X^{\epsilon}_{t}%
,Y^{\epsilon}_{t},\gamma\right)\right]   dt+\sqrt{\epsilon}%
\sigma\left(  X^{\epsilon}_{t},Y^{\epsilon}_{t},\gamma\right)
dW_{t},\nonumber\\
dY^{\epsilon}_{t}&=&\frac{1}{\delta}\left[  \frac{\epsilon}{\delta}f\left(Y^{\epsilon}_{t},\gamma\right)  +g\left(  X^{\epsilon}_{t}%
,Y^{\epsilon}_{t},\gamma\right)\right] dt+\frac{\sqrt{\epsilon}}{\delta}\left[
\tau_{1}\left(  Y^{\epsilon}_{t},\gamma\right)
dW_{t}+\tau_{2}\left(Y^{\epsilon}_{t},\gamma\right)dB_{t}\right], \label{Eq:Main}\\
X^{\epsilon}_{0}&=&x_{0},\hspace{0.2cm}Y^{\epsilon}_{0}=y_{0}\nonumber
\end{eqnarray}
where $\delta=\delta(\epsilon)\downarrow0$ such that $\epsilon/\delta\uparrow\infty$ as $\epsilon\downarrow0$. Here,  $(W_{t}, B_{t})$ is a $2\kappa-$dimensional standard Wiener process. We assume that for each fixed $x\in\mathbb{R}^{m}$,  $b(\cdot,\gamma), c(x,\cdot,\gamma),\sigma(x,\cdot,\gamma),f(\cdot,\gamma)$, $g(x,\cdot,\gamma), \tau_{1}(\cdot,\gamma)$ and $\tau_{2}(\cdot,\gamma)$ are stationary and ergodic random fields. We denote by $\gamma\in\Gamma$ the element of the related probability space. If we want to emphasize the dependence on the initial point and on the random medium, we shall write
$\left(X^{\epsilon,(x_{0},y_{0}),\gamma}, Y^{\epsilon,(x_{0},y_{0}),\gamma}\right)$ for the solution to (\ref{Eq:Main}).

The system (\ref{Eq:Main}) can be interpreted as a small-noise perturbation of dynamical systems with multiple scales. The slow component is $X$ and the fast component is
$Y$.  We study the regime where the homogenization parameter goes faster to zero than the strength of the noise does. The goal of this paper
is to obtain the quenched large deviations principle associated to the component  $X$, that is associated with the slow motion. The case of large deviations for such
 systems in periodic media for all possible interactions between $\epsilon$ and $\delta$, i.e., $\epsilon/\delta\rightarrow 0, c\in(0,\infty)$ or $\infty$, was studied in \cite{Spiliopoulos2013}, see also \cite{Baldi,DupuisSpiliopoulos,FS}. In \cite{Spiliopoulos2013} (see also \cite{DupuisSpiliopoulosWang}), it was assumed that the coefficients
 are periodic with respect to the $y-$variable and based on the derived large deviations principle,  asymptotically efficient importance sampling Monte Carlo methods for estimating rare event probabilities
were obtained. In the current paper, we focus on quenched (i.e. almost sure with respect to the random environment) large deviations for the case $\epsilon/\delta\uparrow\infty$ and
the situation is more complex when compared to the periodic case since the coefficients are now random fields themselves and the fast motion does not take
values in a compact space.

We treat the large deviations problem via the lens of the weak convergence framework, \cite{DupuisEllis}, using entirely probabilistic arguments. This framework transforms the large deviations problem to convergence
of a stochastic control problem. The current work is certainly related to the literature in random homogenization, see
\cite{KomorowskiLandimOlla2012, KosyginaRezakhanlouVaradhan, Kozlov1979,Kozlov1989, LionsSouganidis2006,
Olla1994,OllaSiri2004,Osada1983,Osada1987, PapanicolaouVaradhan1982,Papaanicolaou1994, Rhodes2009a}. Our work is most closely related to \cite{KosyginaRezakhanlouVaradhan,LionsSouganidis2006},
where stochastic homogenization for Hamilton-Jacobi-Bellman (HJB) equations was studied. The authors in \cite{KosyginaRezakhanlouVaradhan, LionsSouganidis2006}
consider
the case $\delta=\epsilon$ with the fast motion being $Y=X/\delta$ and with the coefficients $b=f=0$ in a general Hamiltonian setting. In both papers the authors briefly
discuss large deviations for diffusions (i.e., when the Hamiltonian is quadratic) and the action functional is given as the Legendre-Fenchel transform of
the effective Hamiltonian and the case studied there is $\delta=\epsilon$. Moreover, in \cite{Kushner1, VeretennikovSPA2000} the large deviations principle for systems like (\ref{Eq:Main})
is considered in the case $\epsilon=\delta$ with the coefficients $b=f=0$. In \cite{Kushner1, VeretennikovSPA2000} the coefficients are deterministic (i.e., not random fields as in our case) and stability type conditions
for  the fast process $Y$ are assumed in order to guarantee ergodicity. Lastly, related annealed homogenization results (i.e. on average and not almost sure with respect to the medium)
for uncontrolled multiscale diffusions as in (\ref{Eq:Main}) in the case $\epsilon=1$, $\delta\downarrow 0$ and $Y=X/\delta$ have been recently obtained in \cite{Rhodes2009a}. Under different assumptions on the structure of the coefficients, the opposite case to ours where $\epsilon/\delta\downarrow 0$ has been partially considered in \cite{DupuisSpiliopoulos,FS,Souganidis1999,Spiliopoulos2013}.

In contrast to most of the aforementioned literature, in this paper, we study the case $\epsilon/\delta\uparrow \infty$ and we use entirely probabilistic arguments.
Because $\epsilon/\delta\uparrow \infty$, we also need to consider the additional
 effect of the macroscopic problem (i.e., what is called cell problem in the periodic homogenization literature) due to the highly oscillating term
$\frac{\epsilon}{\delta}\int_{0}^{T}b\left(Y^{\epsilon}_{t},\gamma\right)dt$. We use entirely probabilistic arguments and
because the homogenization parameter goes faster to zero that the
strength of the noise does, we are able to
derive an explicit characterization of the quenched large deviations principle and detailed information on the
change of measure leading to its proof, Theorem \ref{T:MainTheorem3}. Due to the presence of the highly oscillatory term $\frac{\epsilon}{\delta}\int_{0}^{T}b\left(Y^{\epsilon}_{t},\gamma\right)dt$, the change of measure in question depends on the macroscopic problem and we determine this dependence explicitly.
Additionally, in the course of the proof, we obtain quenched (i.e., almost sure with respect to the random environment)  ergodic theorems for uncontrolled and controlled random diffusion processes that may be of independent interest, Theorem \ref{T:MainTheorem1} and Appendix \ref{S:QuenchedErgodicTheorems}. It is of interest to note that for the purposes of proving the Laplace principle, which is equivalent to the large deviations principle, one can constrain the variational problem associated with the stochastic control representation of exponential functionals to a class of $L^{2}$ controls with specific dependence on $\delta,\epsilon$, Lemma \ref{L:RestrictingTheControl}.

Partial motivation for this work comes from chemical physics, molecular dynamics and climate modeling, e.g., \cite{EVMaidaTimofeyev2001,DupuisSpiliopoulosWang2,SchutteWalterHartmannHuisinga2005, Zwanzig}, where one is often interested in simplified models that preserve the large deviation properties of the system in the case where $\delta\ll\epsilon$, i.e., in the case where $\delta$ is orders of magnitude smaller than $\epsilon$.  Other related models where the regime of interest is $\epsilon/\delta\uparrow \infty$
have been considered in \cite{Baldi, DupuisSpiliopoulos, DupuisSpiliopoulosWang, FengFouqueKumar, FS, HorieIshii, Spiliopoulos2013}. When rare events are of interest, then large deviations theory comes into play. As mentioned before,  we are able to derive an explicit characterization of the quenched large deviations principle, Theorem \ref{T:MainTheorem3}.
The explicit form of the derived large deviations action functional and of the control achieving the large deviations bound give useful information
which can be used to design provably efficient importance sampling schemes for estimation of related rare event probabilities. In the case of a periodic fast motion,
the design of large deviations inspired efficient Monte Carlo importance sampling schemes was investigated in
\cite{DupuisSpiliopoulosWang,DupuisSpiliopoulosWang2,Spiliopoulos2013}. The paper \cite{DupuisSpiliopoulosWang} also includes
importance sampling numerical simulations in the case of diffusion moving in a random multiscale environment in dimension one. In the present paper, we
focus on rigorously developing the large deviations theory and the design of asymptotically efficient importance sampling schemes in random environments is
addressed in \cite{Spiliopoulos2014b}.

The rest of the paper is organized as follows. In Section \ref{S:AssumptionMainResult} we set-up notation, state our assumptions and review known results from the literature on random homogenization that will be useful for our purposes. In Section \ref{S:MainResults} we state our main results. Sections \ref{S:ProofLLN}, \ref{S:LDPrelaxedForm} and \ref{S:ProofTheoremExplicitLDP} contain the proofs of the main results of the paper, i.e., quenched homogenization results for pairs of controlled diffusions and occupation measures in random environments  and the large deviations principle with the explicit characterization of the action functional. The Appendix \ref{S:QuenchedErgodicTheorems} contains the proofs of the necessary quenched ergodic theorems for controlled diffusion processes in random environments.

\section{Assumptions, notation and review of useful known results}

\label{S:AssumptionMainResult}

In this section we setup notation and pose the main assumptions of the paper. In this section, and for the convenience of the reader, we also review well known results from the literature on random homogenization that will be useful for our purposes. The content of this section is classical.

We start by describing the properties of the random medium. Let $\left(\Gamma, \mathcal{G},\nu\right)$ be the probability space of the random medium and as in  \cite{JikovKozlovOleinik1994}, a group of measure-preserving transformations $\{\tau_{y},y\in\mathbb{R}^{d}\}$ acting ergodically on $\Gamma$.

\begin{definition}
\label{Def:medium}We assume that the following hold.

\begin{enumerate}
\item {$\tau_{y}$ preserves the measure, namely $\forall y\in\mathbb{R}^{d-m}$
and $\forall A\in\mathcal{G}$ we have $\nu(\tau_{y}A)=\nu(A)$.}

\item {The action of $\{\tau_{y}: y\in\mathbb{R}^{d-m}\}$ is ergodic, that is if
$A=\tau_{y}A$ for every $y\in\mathbb{R}^{d}$ then $\nu(A)=0$ or $1$.}

\item { For every measurable function $f$ on $\left(  \Gamma, \mathcal{G},
\nu\right)  $, the function $(y,\gamma)\mapsto f(\tau_{y}\gamma)$ is
measurable on $\left(  \mathbb{R}^{d-m}\times\Gamma, \mathbb{B}(\mathbb{R}%
^{d-m})\otimes\mathcal{G}\right)  $.}
\end{enumerate}
\end{definition}

For $\tilde{\phi}\in L^{2}(\Gamma)$ (i.e., a square integrable function in $\Gamma$), we define the operator $T_{y}\tilde{\phi}(\gamma)=\tilde{\phi}(\tau_{y}\gamma)$. It is known, e.g. \cite{Olla1994}, that $T_{y}$ forms
a strongly continuous group of unitary maps in $L^{2}(\Gamma)$. Moreover, if the
limit exists, the infinitesimal generator $D_{i}$ of $T_{y}$ in the direction
$i$ is defined by
\begin{equation}
D_{i}\tilde{\phi}=\lim_{h\downarrow0}\frac{T_{he_{i}}\tilde{\phi}-\tilde{\phi}}{h}.
\label{Eq:defofD}%
\end{equation}
and is a closed and densely defined generator.

Next, for $\tilde{\phi}\in L^{2}(\Gamma)$, we define $\phi(y,\gamma)=\tilde{\phi}(\tau_{y}\gamma)$. This definition guarantees that $\phi$
 will be a stationary and ergodic random field on $\mathbb{R}^{d-m}$. Similarly, for a measurable function $\tilde{\phi}:\mathbb{R}^{m}\times\Gamma\mapsto\mathbb{R}^{m}$
we consider the (locally) stationary random field $(x,y) \mapsto \tilde{\phi}(x,\tau_{y}\gamma)=\phi(x,y,\gamma)$.

We follow this procedure to define the random fields $b,c,\sigma,f,g,\tau_{1},\tau_{2}$ that play the role of the coefficients of (\ref{Eq:Main}),
which then guarantees that they are  ergodic and stationary random fields. In particular, we start with $L^{2}(\Gamma)$ functions $\tilde{b}(\gamma),\tilde{c}(x,\gamma),\tilde{\sigma}(x,\gamma),\tilde{f}(\gamma),\tilde{g}(x,\gamma)$, $\tilde{\tau}_{1}(\gamma),\tilde{\tau}_{2}(\gamma)$ and we define the coefficients of (\ref{Eq:Main}) via the relations
$b(y,\gamma)=\tilde{b}(\tau_{y}\gamma),c(x,y,\gamma)=\tilde{c}(x,\tau_{y}\gamma),\sigma(x,y,\gamma)=\tilde{\sigma}(x,\tau_{y}\gamma),f(y,\gamma)=\tilde{f}(\tau_{y}\gamma),
g(x,y,\gamma)=\tilde{g}(x,\tau_{y}\gamma),\tau_{1}(y,\gamma)=\tilde{\tau}_{1}(\tau_{y}\gamma)$ and $\tau_{2}(y,\gamma)=\tilde{\tau}_{2}(\tau_{y}\gamma)$.

The main assumption for the coefficients of (\ref{Eq:Main}) is as follows.

\begin{condition}
\label{A:Assumption1}

\begin{enumerate}
\item The functions $b(y,\gamma),c(x,y,\gamma),\sigma(x,y,\gamma), f(y,\gamma), g(x,y,\gamma), \tau_{1}(y,\gamma)$ and $\tau_{2}(y,\gamma)$ are
$C^{1}(\mathbb{R}^{d-m})$ in $y$ and $C^{1}(\mathbb{R}^{m})$ in $x$ with all
partial derivatives continuous and globally bounded in $x$ and $y $.

\item For every fixed $\gamma\in\Gamma$, the diffusion matrices $\sigma\sigma^{T}$ and $\tau_{1}\tau_{1}^{T}+\tau_{2}\tau_{2}^{T}$ are uniformly nondegenerate.
\end{enumerate}
\end{condition}

It is known that under Condition \ref{A:Assumption1}, there exists a filtered probability space $(\Omega,\mathcal{F},\mathfrak{F}_{t},\mathbb{P})$ such that for every given initial point $(x_{0},y_{0})\in\mathbb{R}^{m}\times\mathbb{R}^{d-m}$, for every $\gamma\in\Gamma$ and for every $\epsilon,\delta>0$ there exists a strong Markov process $\left(X^{\epsilon}_{t},Y^{\epsilon}_{t}, t\geq 0\right)$ satisfying (\ref{Eq:Main}). However, if we define a probability measure $\mathcal{P}=\nu\otimes\mathbb{P}$ on the product space
$\Gamma\times\Omega$, then when considered
on the probability space $(\Gamma\times\Omega,\mathcal{G}\otimes
\mathcal{F},\mathcal{P})$, $\{\left(X_{t}^{\epsilon}, Y^{\epsilon}_{t}\right),t\geq0\}$ is not a Markov process.

From the previous discussion it is easy to see that the periodic case is a special case of the previous setup. Indeed, we can consider  the periodic case with period $1$, $\Gamma$ to be the unit
torus and $\nu$ to be Lebesgue measure on $\Gamma$. For every
$\gamma\in\Gamma$, the shift operators $\tau_{y}\gamma
=(y+\gamma)\mod 1$ and we have $\phi(y,\gamma)=\tilde{\phi}(y+\gamma)$ for a periodic
function $\tilde{\phi}$ with period $1$.

For every $\gamma\in\Gamma$, we define next the operator
\[
\mathcal{L}^{\gamma}=f(y,\gamma)\nabla_{y}\cdot+\text{\emph{tr}}\left[\left(
\tau_{1}(y,\gamma)\tau^{T}_{1}(y,\gamma)+\tau_{2}(y,\gamma)\tau^{T}_{2}(y,\gamma)\right)\nabla_{y}\nabla_{y}\cdot\right] \label{Def:OperatorFastProcess}
\]
and we let $Y_{t}^{\gamma}$ to be the corresponding Markov process. It follows from \cite{PapanicolaouVaradhan1982,Osada1983,Olla1994}, that we can associate the
canonical process on $\Gamma$ defined by the environment $\gamma$, which is a Markov process on $\Gamma$ with continuous
transition probability densities with respect to $d$-dimensional Lebesgue
measure, e.g., \cite{Olla1994}. In particular, we let
\begin{align}
\gamma_{t}  &  =\tau_{Y_{t}^{\gamma}}\gamma\label{Eq:EnvironmentProcess}\\
\gamma_{0}  &  =\tau_{y_{0}}\gamma\nonumber
\end{align}

\begin{definition}
\label{Def:OperatorFastProcess2} We denote the infinitesimal generator of the
Markov process $\gamma_{t}$ by
\[
\tilde{L}=\tilde{f}(\gamma)D\cdot+\text{\emph{tr}}\left[ \left( \tilde{\tau
}_{1}(\gamma)\tilde{\tau}_{1}^{T}(\gamma)+\tilde{\tau
}_{2}(\gamma)\tilde{\tau}_{2}^{T}(\gamma)\right)D^{2}\cdot\right]  ,
\]
where $D$ was defined in (\ref{Eq:defofD}).
\end{definition}

Following \cite{Osada1983}, we assume the following condition on the structure of the operator defined in Definition
\ref{Def:OperatorFastProcess2}. This condition allows to have a closed form for the unique ergodic invariant measure for the environment process
$\{\gamma_{t}\}_{t\geq0}$, Proposition \ref{P:NewMeasureRandomCase}.

\begin{condition}
\label{A:Assumption2} We can write the operator $\tilde{L}$ in the following generalized divergence form
\[
\tilde{L}=\frac{1}{\tilde{m}(\gamma)}\left[  \sum_{i,j}D_{i}\left(  \tilde
{a}\tilde{a}_{i,j}^{T}(\gamma)D_{j}\cdot\right)  +\sum_{j}\tilde{\beta}%
_{j}(\gamma)D_{j}\cdot\right]
\]
where $\tilde{\beta}_{j}=\tilde{m}\tilde{f}_{j}-\sum_{i}D_{i}\left(
\left(\tilde{\tau}_{1}\tilde{\tau}_{1}^{T}+\tilde{\tau}_{2}\tilde{\tau}_{2}^{T}\right)_{i,j}\tilde{m}\right)  $ and $\tilde{a}%
\tilde{a}_{i,j}^{T}=
\left(\tilde{\tau}_{1}\tilde{\tau}_{1}^{T}+\tilde{\tau}_{2}\tilde{\tau}_{2}^{T}\right)_{i,j}\tilde{m}$. We
assume that $\tilde{m}(\gamma)$ is bounded from below and from above with
probability $1$, that there exist smooth $\tilde{d}_{i,j}(\gamma)$ such that
$\tilde{\beta}_{j}=\sum_{j}D_{j}\tilde{d}_{i,j}$ with $|\tilde{d}_{i,j}|\leq
M$ for some $M<\infty$ almost surely and
\[
\text{div }\tilde{\beta}=0\text{ in distribution},\quad\text{i.e.,}%
\int_{\Gamma}\sum_{j=1}^{d}\tilde{\beta}_{j}(\gamma)D_{j}\tilde{\phi}%
(\gamma)\nu(d\gamma)=0,\quad\forall\tilde{\phi}\in\mathcal{H}^{1},
\]
where the Sobolev space $\mathcal{H}^{1}=\mathcal{H}^{1}(\nu)$ is the Hilbert space  equipped with the
inner product
\[
(\tilde{f},\tilde{g})_{1}=\sum_{i=1}^{d}(D_{i}\tilde
{f},D_{i}\tilde{g}).
\]

\end{condition}
\begin{example}
A trivial example that satisfies Condition \ref{A:Assumption2} is the gradient case. Let $\tilde{f}(\gamma)=-D
\tilde{Q}(\gamma)$ and $\tilde{\tau}_{1}(\gamma)=\sqrt{2D}=\text{constant}$ and $\tilde{\tau}_{2}(\gamma)=0$. Then, we have  that
$\tilde{m}(\gamma)=\exp[-\tilde{Q}(\gamma)/D]$ and $\tilde{\beta}_{j}=0$
for all $1\leq j\leq d$. Moreover, if $\tilde{m}=1$ and
$\tilde{d}_{i,j}$ are constants then the operator is of divergence form.
\end{example}

Next, we recall some classical results from random homogenization.
\begin{proposition}
[\cite{Osada1983} and Theorem 2.1 in \cite{Olla1994}]%
\label{P:NewMeasureRandomCase} Assume Conditions \ref{A:Assumption1} and
\ref{A:Assumption2}. Define a measure on $(\Gamma,\mathcal{G})$ by%
\[
\pi(d\gamma)\doteq\frac{\tilde{m}(\gamma)}{\mathrm{E}^{\nu}\tilde{m}(\cdot
)}\nu(d\gamma).
\]
Then $\pi$ is the unique ergodic invariant measure for the environment process
$\{\gamma_{t}\}_{t\geq0}$.
\end{proposition}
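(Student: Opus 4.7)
The plan is to proceed in two separate steps: first verify that $\pi$ is infinitesimally invariant for the generator $\tilde{L}$, and then extract uniqueness/ergodicity from the Dirichlet form associated with $\tilde{L}$ on $L^{2}(\pi)$. The key algebraic input is that the infinitesimal generators $D_i$ of the group $T_y$ are closed, densely defined, and mean-zero on $L^{2}(\nu)$ (i.e., $\mathrm{E}^{\nu}[D_i \tilde{\phi}]=0$), because $T_y$ acts as a strongly continuous unitary group so each $D_i$ is skew-adjoint on $L^{2}(\nu)$ and hence orthogonal to the constants.

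First I would establish invariance. For a test function $\tilde{\phi}$ in a suitable core (smooth cylindrical functions, or at least $\mathcal{H}^{1}$ with $\tilde{a}\tilde{a}^{T}D_j\tilde{\phi}$ in the domain of $D_i$), write
\[
\int_{\Gamma}\tilde{L}\tilde{\phi}\,d\pi=\frac{1}{\mathrm{E}^{\nu}\tilde{m}}\int_{\Gamma}\!\Bigl[\sum_{i,j}D_i\!\bigl(\tilde{a}\tilde{a}^{T}_{i,j}D_j\tilde{\phi}\bigr)+\sum_j \tilde{\beta}_j D_j\tilde{\phi}\Bigr]\nu(d\gamma),
\]
using $\pi(d\gamma)=\tilde{m}(\gamma)\nu(d\gamma)/\mathrm{E}^{\nu}\tilde{m}$ to cancel the prefactor $1/\tilde{m}$ in $\tilde{L}$. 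The first sum vanishes term by term by the skew-symmetry of $D_i$ (mean-zero property above), while the second vanishes by the hypothesis $\operatorname{div}\tilde{\beta}=0$ in distribution from Condition \ref{A:Assumption2}. Hence $\int \tilde{L}\tilde{\phi}\,d\pi=0$ for $\tilde{\phi}$ in the core, and by a density argument $\pi$ is invariant for the semigroup generated by $\tilde{L}$.

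Next I would prove ergodicity via the Dirichlet form. Using the same integration by parts and the skew-symmetry of the drift term $\tilde{\beta}\cdot D$ (which contributes only to the antisymmetric part), one obtains
\[
\mathcal{E}(\tilde{\phi},\tilde{\phi})\;\Def\;-\langle \tilde{L}\tilde{\phi},\tilde{\phi}\rangle_{\pi}\;=\;\frac{1}{\mathrm{E}^{\nu}\tilde{m}}\int_{\Gamma}\sum_{i,j}\tilde{a}\tilde{a}^{T}_{i,j}(\gamma)\,D_i\tilde{\phi}(\gamma)D_j\tilde{\phi}(\gamma)\,\nu(d\gamma).
\]
If $\tilde{\phi}\in L^{2}(\pi)$ is invariant under the semigroup, i.e.\ $\tilde{L}\tilde{\phi}=0$, then $\mathcal{E}(\tilde{\phi},\tilde{\phi})=0$. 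Uniform nondegeneracy of $\tau_{1}\tau_{1}^{T}+\tau_{2}\tau_{2}^{T}$ in Condition \ref{A:Assumption1} together with the lower bound on $\tilde{m}$ from Condition \ref{A:Assumption2} gives uniform ellipticity of $\tilde{a}\tilde{a}^{T}$ on $\Gamma$, so $D_i\tilde{\phi}=0$ in $L^{2}(\nu)$ for every $i=1,\dots,d-m$. This forces $\tilde{\phi}$ to be invariant under the spatial shifts $T_y$, and ergodicity of the group $\{\tau_y\}$ on $(\Gamma,\mathcal{G},\nu)$ then implies $\tilde{\phi}$ is $\nu$-almost surely constant; equivalently, $\pi$-almost surely constant. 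Any other invariant probability measure $\pi'$ would therefore have to give the same expectation to every bounded measurable $\tilde{\phi}$ as $\pi$, and is thus equal to $\pi$.

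The main technical obstacle, and the point where I would have to be most careful, is the functional-analytic setup: justifying that the formal integration by parts used in both steps is legitimate on a core dense enough to determine the semigroup, and that the operators $\tilde{a}\tilde{a}^{T}_{i,j}D_j$ fall in the domain of $D_i$. This is handled by working first with smooth cylindrical functions of $\gamma$, approximating general $\tilde{\phi}\in\mathcal{H}^{1}$, and invoking the closedness of $D_i$ together with the boundedness of $\tilde{m}$ and its bounds from Condition \ref{A:Assumption2}; this is precisely the construction carried out in \cite{Osada1983} and summarized in \cite{Olla1994}, which I would follow.
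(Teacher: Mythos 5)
The paper does not prove this proposition; it simply cites \cite{Osada1983} and Theorem~2.1 of \cite{Olla1994} and treats the result as classical background. Your proposal therefore cannot be checked against a proof in the paper, but it does faithfully reconstruct the standard argument from those references. The invariance step is correct: writing $\tilde{L}$ in the generalized divergence form of Condition~\ref{A:Assumption2} and pairing against $\pi = \tilde{m}\,\nu/\mathrm{E}^{\nu}\tilde{m}$ cancels the $1/\tilde{m}$, the pure-divergence term integrates to zero by skew-adjointness of the $D_i$ on $L^{2}(\nu)$ (since $T_y$ is a unitary group preserving $\nu$ and $D_i 1 = 0$), and the drift term vanishes exactly by the hypothesis $\operatorname{div}\tilde\beta=0$ in distribution. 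The ergodicity step via the Dirichlet form, uniform ellipticity of $\tilde a\tilde a^{T}$ (from nondegeneracy of $\tau_1\tau_1^{T}+\tau_2\tau_2^{T}$ and the two-sided bound on $\tilde m$), and ergodicity of $\{\tau_y\}$ is also the right chain, and you correctly use that the antisymmetric drift contributes nothing to $\langle\tilde L\tilde\phi,\tilde\phi\rangle_\pi$.

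The one place you should tighten is the final uniqueness sentence. As written (``any other invariant $\pi'$ would give the same expectation... and is thus equal to $\pi$'') it does not follow from what precedes it, because two mutually singular invariant measures could exist a priori. The clean argument is: restrict to invariant measures absolutely continuous with respect to $\nu$, note $\pi\sim\nu$ because $\tilde m$ is bounded above and below, so $\pi'\ll\pi$; the density $d\pi'/d\pi$ is then an invariant function of the environment process, hence constant $\pi$-a.s.\ by the ergodicity you just established, and the constant must be $1$. Adding this sentence closes the argument; with it your proof is complete and matches the route in \cite{Osada1983,Olla1994}.
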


We will denote by $\mathrm{E}^{\nu}$ and by $\mathrm{E}^{\pi}$ the expectation
operator with respect to the measures $\nu$ and $\pi$ respectively. We remark here that since $\tilde{m}$ is bounded from above and from below,
$\mathcal{H}^{1}(\nu)$ and $\mathcal{H}^{1}(\pi)$ are equivalent.  We also need to introduce
 the macroscopic problem, known as cell problem in the periodic homogenization literature or corrector in the homogenization literature in
general. This is needed in order to address the situation $\tilde{b}\neq0$. For every $\rho>0$, we consider the solution to the auxiliary problem on $\Gamma$.
\begin{equation}
\rho\tilde{\chi}_{\rho}-\tilde{L}\tilde{\chi}_{\rho}=\tilde{b}.
\label{Eq:RandomCellProblem}%
\end{equation}

Let us review some well known facts related to the solution to this auxiliary problem, e.g.,
see \cite{Olla1994, KomorowskiLandimOlla2012}. By Lax-Milgram lemma,
equation (\ref{Eq:RandomCellProblem}) has a unique weak solution in the abstract Sobolev space
$\mathcal{H}^{1}$. Moreover, letting $\mathcal{R}_{\rho}\tilde
{h}(\gamma)=\int_{0}^{\infty}e^{-\rho t}\mathrm{E}_{\gamma}\tilde{h}%
(\gamma_{t})dt$, for every $\tilde{h}\in L^{2}(\Gamma)$, we have
\[
\tilde{\chi}_{\rho}(\cdot)=\mathcal{R}_{\rho}\tilde{b}(\cdot),
\]

As in \cite{Osada1983,PapanicolaouVaradhan1982}, there is a constant $K$ that is independent of $\rho$ such that
\[
\rho\mathrm{E}^{\pi}\left[  \tilde{\chi}_{\rho}(\cdot)\right]  ^{2}%
+\mathrm{E}^{\pi}\left[  D\tilde{\chi}_{\rho}(\cdot)\right]  ^{2}\leq K
\]

By Proposition 2.6 in
\cite{Olla1994} we then get that $\tilde{\chi}_{\rho}$ has an $\mathcal{H}^{1}$
strong limit, i.e., there exists a $\tilde{\chi}_{0}\in\mathcal{H}^{1}(\pi)$
such that
\[
\lim_{\rho\downarrow0}\left\Vert \tilde{\chi}_{\rho}(\cdot)-\tilde{\chi}%
_{0}(\cdot)\right\Vert _{1}=0
\]
and that
\[
\lim_{\rho\downarrow0}\rho\mathrm{E}^{\pi}\left[  \tilde{\chi}_{\rho}%
(\cdot)\right]  ^{2}=0.
\]

This implies that $D\tilde{\chi}_{\rho}\in L^{2}(\pi)$ and that it has a $L^{2}(\pi)$
strong limit, i.e., there exists a $\tilde{\xi}\in L^{2}(\pi)$ such that
\[
\lim_{\rho\downarrow0}\left\Vert D\tilde{\chi}_{\rho}-\tilde{\xi}\right\Vert
_{L^{2}}^{2}=0
\]

In addition, since $\tilde{b}$ is bounded under Condition \ref{A:Assumption1},
$\tilde{\chi}_{\rho}$ is also bounded. This follows because the resolvent
operator $\mathcal{R}_{\rho}$ corresponding to the operator $\rho
I-\mathcal{L}$ is associated to a $L^{\infty}(\Gamma)$ contraction semigroup,
see Section 2.2 of \cite{Olla1994}.

Moreover, as in Proposition 3.2. of \cite{Osada1983}, we have that for almost all $\gamma\in\Gamma$
\[
\delta \chi_{0}(y/\delta,\gamma)\rightarrow 0, \text{ as }\delta\downarrow 0, \text{ a.s. } y\in\mathcal{Y}.
\]

\section{Main results}\label{S:MainResults}

In this section we present the statement of the main results of the paper. In preparation for stating the large deviations theorem, we first recall the concept of a Laplace principle.

\begin{definition}
\label{Def:LaplacePrinciple} Let $\{X^{\epsilon},\epsilon>0\}$ be a family of
random variables taking values in a Polish space $\mathcal{S}$ and let $I$ be a rate function
on $\mathcal{S}$. We say that $\{X^{\epsilon},\epsilon>0\}$ satisfies the
Laplace principle with rate function $I$ if for every bounded and continuous
function $h:\mathcal{S}\rightarrow\mathbb{R}$
\begin{equation}
\lim_{\epsilon\downarrow0}-\epsilon\ln\mathbb{E}\left[  \exp\left\{
-\frac{h(X^{\epsilon})}{\epsilon}\right\}  \right]  =\inf_{x\in\mathcal{S}%
}\left[  I(x)+h(x)\right]. \label{Eq:LaplacePrinciple}%
\end{equation}

\end{definition}

If the level sets of the rate function (equivalently action functional) are compact, then the
Laplace principle is equivalent to the corresponding large deviations
principle with the same rate function (Theorems 2.2.1 and 2.2.3
in \cite{DupuisEllis}).

In order to establish the quenched Laplace principle, we make use of the representation theorem for functionals of the form $\mathbb{E}\left[e^{-\frac{1}{\epsilon}h\left(X^{\epsilon,\gamma}\right)}\right]$ in terms of a stochastic control problem. Such representations were first derived in \cite{BoueDupuis}.

Let $\mathcal{A}$ be the set of all $\mathfrak{F}_{s}-$progressively
measurable $n$-dimensional processes $u\doteq\{u(s),0\leq s\leq T\}$
satisfying
\begin{equation*}
\mathbb{E}\int_{0}^{T}\left\Vert u(s)\right\Vert ^{2}ds<\infty,
\label{A:AdmissibleControls}%
\end{equation*}
In the present case, let $Z(\cdot)=(W(\cdot),B(\cdot))$ and $n=2k$. Then, for the given $\gamma\in\Gamma$ we have the representation

\begin{equation}
-\epsilon\ln\mathbb{E}_{x_{0}, y_{0}}
\left[  \exp\left\{  -\frac{h(X^{\epsilon}%
)}{\epsilon}\right\}  \right]  =\inf_{u\in\mathcal{A}}\mathbb{E}_{x_{0},y_{0}%
}\left[  \frac{1}{2}\int_{0}^{T}\left[\left\Vert u_{1}(s)\right\Vert ^{2}+\left\Vert u_{2}(s)\right\Vert ^{2}\right]ds+h(\bar
{X}^{\epsilon})\right] \label{Eq:VariationalRepresentation}
\end{equation}
where the pair $(\bar{X}^{\epsilon},\bar{Y}^{\epsilon})$ is the unique strong solution to

\begin{eqnarray}
d\bar{X}^{\epsilon}_{t}&=&\left[  \frac{\epsilon}{\delta}b\left(\bar{Y}^{\epsilon}_{t},\gamma\right)+c\left(  \bar{X}^{\epsilon}_{t}%
,\bar{Y}^{\epsilon}_{t},\gamma\right)+\sigma\left(  \bar{X}_{t}^{\epsilon},\bar{Y}_{t}^{\epsilon},\gamma\right)  u_{1}(t)\right]   dt+\sqrt{\epsilon}%
\sigma\left(  \bar{X}^{\epsilon}_{t},\bar{Y}^{\epsilon}_{t},\gamma\right)
dW_{t}, \nonumber\\
d\bar{Y}^{\epsilon}_{t}&=&\frac{1}{\delta}\left[  \frac{\epsilon}{\delta}f\left(\bar{Y}^{\epsilon}_{t},\gamma\right)  +g\left(  \bar{X}^{\epsilon}_{t}
,\bar{Y}^{\epsilon}_{t},\gamma\right)+\tau_{1}\left(\bar{Y}^{\epsilon}_{t},\gamma\right)u_{1}(t)+
\tau_{2}\left(\bar{Y}^{\epsilon}_{t},\gamma\right)u_{2}(t)\right]   dt\nonumber\\
& &\hspace{5cm}+\frac{\sqrt{\epsilon}}{\delta}\left[
\tau_{1}\left(\bar{Y}^{\epsilon}_{t},\gamma\right)
dW_{t}+\tau_{2}\left(\bar{Y}^{\epsilon}_{t},\gamma\right)dB_{t}\right],\label{Eq:Main2}\\
\bar{X}^{\epsilon}_{0}&=&x_{0},\hspace{0.2cm}\bar{Y}^{\epsilon}_{0}=y_{0}\nonumber
\end{eqnarray}

This representation implies that in order to derive the Laplace principle for
$\{X^{\epsilon}\}$, it is enough to study the limit of the right
hand side of the variational representation
(\ref{Eq:VariationalRepresentation}). The first step in doing so is
to consider the weak limit of the slow motion $\bar{X}^{\epsilon}$
of the controlled couple (\ref{Eq:Main2}).

Fix $\gamma\in\Gamma$ and let us define for notational convenience $\mathcal{Z}=\mathbb{R}^{\kappa}$ and $\mathcal{Y}=\mathbb{R}^{d-m}$. Due to the involved
controls, it is convenient to introduce the following occupation
measure. Let $\Delta=\Delta(\epsilon )\downarrow0$ as
$\epsilon\downarrow0$ that will be chosen later on and is used to exploit a time-scale separation. Let $A_{1},A_{2},B,\Theta$ be Borel sets of
$\mathcal{Z},\mathcal{Z},\Gamma,[0,T]$ respectively. Let
$u^{\epsilon}_{i}\in A_{i}, i=1,2$ and let $(\bar{X}^{\epsilon
},\bar{Y}^{\epsilon})$ solve (\ref{Eq:Main2}) with
$u^{\epsilon}_{i}$ in place of $u_{i}$. We associate with
$(\bar{X}^{\epsilon},\bar{Y}^{\epsilon})$ and
$u^{\epsilon}_{i}$ a family of occupation measures
$\mathrm{P}^{\epsilon,\Delta,\gamma}$ defined by
\begin{equation*}
\mathrm{P}^{\epsilon,\Delta,\gamma}(A_{1}\times A_{2}\times B\times\Theta)=\int_{\Theta}\left[
\frac{1}{\Delta}\int_{t}^{t+\Delta}1_{A_{1}}(u_{1}^{\epsilon}(s))1_{A_{2}}(u_{2}^{\epsilon}(s))1_{B}\left(
\tau_{\bar{Y}_{s}^{\epsilon}}\gamma\right)  ds\right]  dt,
\label{Def:OccupationMeasures2}%
\end{equation*}
assuming that $u_{i}^{\epsilon}(t)=0$ for $i=1,2$ if $t>T$. Next, we introduce the notion of a viable pair, see also \cite{DupuisSpiliopoulos}. Such a notion will allow us to  characterize the limiting behavior of the pair $\left(\bar{X}^{\epsilon,\gamma}, \mathrm{P}^{\epsilon,\Delta,\gamma}\right)$.

\begin{definition}
\label{D:ViablePair} Define the function in $L^{2}(\Gamma)$
\[
\tilde{\lambda}(x,\gamma,z_{1},z_{2})   = \tilde{c}(x,\gamma)+\tilde{\xi}(\gamma) \tilde{g}(x,\gamma) +\tilde{\sigma}(x,\gamma)z_{1}+\tilde{\xi}(\gamma)\left(\tilde{\tau}_{1}(\gamma)z_{1}+\tilde{\tau}_{2}(\gamma)z_{2}\right)
\]
where $\tilde{\xi}$ is the $L^{2}$ limit of $D\tilde{\chi}_{\rho}$ as $\rho\downarrow 0$ that is defined in Section \ref{S:AssumptionMainResult}. Consider the operator
$\tilde{L}$ defined in Definition \ref{Def:OperatorFastProcess2}. We say that
a pair $(\psi,\mathrm{P})\in\mathcal{C}\left(  [0,T];\mathbb{R}^{m}\right)
\times\mathcal{P}\left(  \mathcal{Z}\times\mathcal{Z}\times\Gamma\times\lbrack0,T]\right)  $ is
viable with respect to $(\tilde{\lambda},\tilde{L})$ and we write
$(\psi,\mathrm{P})\in\mathcal{V}$, if the following hold.

\begin{itemize}
\item { The function $\psi$ is absolutely continuous and $\mathrm{P}$ is
square integrable in the sense that $\int_{\mathcal{Z}\times\mathcal{Z}\times\Gamma\times
[0,T]}|z|^{2}\mathrm{P}(dz_{1}dz_{2} d\gamma dt)<\infty$.}

\item {For all $t\in[0,T]$, $\mathrm{P}\left(  \mathcal{Z}\times\mathcal{Z}\times\Gamma
\times[0,t]\right)  =t$. Thus,  $\mathrm{P}$ can be decomposed as
$\mathrm{P}(dz_{1}dz_{2} d\gamma dt)=\mathrm{P}_{t}(dz_{1}dz_{2} d\gamma)dt$ such that
$\mathrm{P}_{t}(\mathcal{Z}\times\mathcal{Z}\times\Gamma)=1$. }

\item {For all $t\in\lbrack0,T]$, $\left(\psi,\mathrm{P}\right)$ satisfy the ODE
\begin{equation}
\psi_{t}=x_{0}+\int_{0}^{t}\left[  \int_{\mathcal{Z}\times\mathcal{Z}%
\times\Gamma}\tilde{\lambda}(\psi_{s},\gamma,z_{1},z_{2})\mathrm{P}_{s}%
(dz_{1}dz_{2}d\gamma)\right]  ds. \label{Eq:ViablePair1}%
\end{equation}
and for a given $\mathrm{P}$, there is a unique well defined $\psi$ satisfying (\ref{Eq:ViablePair1}).
}

\item {For a.e. $t\in\lbrack0,T]$,
\begin{equation}
\int_{\mathcal{Z}\times\mathcal{Z}\times\Gamma}\tilde{L}\tilde{f}(\gamma)\mathrm{P}%
_{t}(dz_{1}dz_{2}d\gamma)=0 \label{Eq:ViablePair2}%
\end{equation}
for all $\tilde{f}\in\mathcal{D}(\tilde{L})$.}
\end{itemize}
\end{definition}
For notational convenience later on, let us also define
\[
\tilde{\lambda}_{\rho}(x,\gamma,z_{1},z_{2})   = \tilde{c}(x,\gamma)+D\tilde{\chi}_{\rho}(\gamma) \tilde{g}(x,\gamma) +\tilde{\sigma}(x,\gamma)z_{1}+D\tilde{\chi}_{\rho}(\gamma)\left(\tilde{\tau}_{1}(\gamma)z_{1}+\tilde{\tau}_{2}(\gamma)z_{2}\right)
\]

Now, that we have defined the notion of a viable pair we are ready to present the law of large numbers results for controlled pairs $\left(\bar{X}^{\epsilon,\gamma},\mathrm{P}^{\epsilon,\Delta,\gamma}\right)$.

\begin{theorem}
\label{T:MainTheorem1} Assume Conditions \ref{A:Assumption1} and \ref{A:Assumption2}. Fix the initial point
 $(x_{0},y_{0})\in\mathbb{R}^{m}\times \mathcal{Y}$ and consider a family
$\{u^{\epsilon}=(u^{\epsilon}_{1},u^{\epsilon}_{2}),\epsilon>0\}$ of controls (that may depend on $\gamma$) in $\mathcal{A}$ satisfying a.s. with respect to $\gamma\in\Gamma$, the bound \ref{Eq:UniformlySquareIntegrableControlsAdditionalb} and
\begin{equation}
\sup_{\epsilon>0}\mathbb{E}\int_{0}^{T}\left[\left\Vert u_{1}^{\epsilon}(s)\right\Vert
^{2}+\left\Vert u_{2}^{\epsilon}(s)\right\Vert
^{2}\right]ds<\infty\label{Eq:Ubound}%
\end{equation}

Then the family $\{(\bar{X}^{\epsilon,\gamma
},\mathrm{P}^{\epsilon,\Delta,\gamma}),\epsilon>0\}$ is tight almost surely with respect to $\gamma\in\Gamma$. Given  any subsequence of $\{(\bar{X}^{\epsilon}%
,\mathrm{P}^{\epsilon,\Delta}),\epsilon>0\}$, there exists a
subsubsequence that converges in distribution with limit
$(\bar{X},\mathrm{P})$ almost surely with respect to $\gamma\in\Gamma$. With probability $1$, the limit
point
$(\bar{X},\mathrm{P})\in\mathcal{V}$,
according to Definition \ref{D:ViablePair}.
\end{theorem}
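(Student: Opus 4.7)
The plan is to prove the theorem in three stages: tightness of the family $(\bar{X}^{\epsilon,\gamma}, \mathrm{P}^{\epsilon,\Delta,\gamma})$, identification of subsequential limits via the macroscopic corrector of Section \ref{S:AssumptionMainResult}, and verification of the invariance property (\ref{Eq:ViablePair2}). All statements must be upgraded to hold almost surely in $\gamma\in\Gamma$ using the quenched ergodic theorems of Appendix \ref{S:QuenchedErgodicTheorems}.

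For tightness, the temporal marginal of $\mathrm{P}^{\epsilon,\Delta,\gamma}$ is uniform on $[0,T]$ by definition, tightness of the control marginal on $\mathcal{Z}\times\mathcal{Z}$ follows from Chebyshev's inequality applied to the uniform $L^2$ bound (\ref{Eq:Ubound}), and tightness of the $\Gamma$-marginal is a direct consequence of the quenched ergodic theorem applied to the environment process $\gamma^\epsilon_t = \tau_{\bar{Y}^\epsilon_t}\gamma$, which asymptotically concentrates the mass on compacts of $\pi$-measure arbitrarily close to one. For the slow motion $\bar{X}^{\epsilon,\gamma}$ the only obstacle is the oscillatory drift $\frac{\epsilon}{\delta}b(\bar{Y}^\epsilon_t,\gamma)$. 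I would apply It\^o's formula to $\Phi(y) := \tilde{\chi}_\rho(\tau_y\gamma)$ along $\bar{Y}^\epsilon$, observe that the leading generator of $\bar{Y}^\epsilon$ is $\frac{\epsilon}{\delta^2}\tilde{L}$, and use the cell-problem identity $\tilde{L}\tilde{\chi}_\rho = \rho\tilde{\chi}_\rho - \tilde{b}$ to obtain
\begin{equation*}
\tfrac{\epsilon}{\delta}\!\!\int_0^t\!\! b(\bar{Y}^\epsilon_s,\gamma)\,ds = -\delta\bigl[\tilde{\chi}_\rho(\tau_{\bar{Y}^\epsilon_t}\gamma)-\tilde{\chi}_\rho(\tau_{y_0}\gamma)\bigr] + \rho\tfrac{\epsilon}{\delta}\!\!\int_0^t\!\!\tilde{\chi}_\rho\,ds + \!\!\int_0^t\!\! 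D\tilde{\chi}_\rho\cdot[g + \tau_1 u^\epsilon_1 + \tau_2 u^\epsilon_2]\,ds + \delta M^\epsilon_t,
\end{equation*}
where $M^\epsilon_t$ is a martingale of controlled quadratic variation. Substituting into the $\bar{X}^\epsilon$ equation converts the rough term into a boundary piece of order $\delta$ (harmless since $\tilde{\chi}_\rho$ is $L^\infty$-bounded under Condition \ref{A:Assumption1}), an integral linear in $D\tilde{\chi}_\rho\in L^2(\pi)$ and the controls, and a vanishing martingale. Combined with (\ref{Eq:Ubound}) and Aldous-type moment estimates this yields tightness of $\bar{X}^{\epsilon,\gamma}$ in $\mathcal{C}([0,T];\mathbb{R}^m)$.

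To identify subsequential limits I would choose $\rho=\rho(\epsilon)\downarrow 0$ with $\rho\epsilon/\delta\to 0$, so that the $\rho\tilde{\chi}_\rho$ contribution above is asymptotically negligible, pass to a convergent subsubsequence, and appeal to the quenched ergodic theorem on continuous bounded functionals of $(\gamma,z_1,z_2)$ to obtain (\ref{Eq:ViablePair1}) with $\tilde{\lambda}_\rho$ in place of $\tilde{\lambda}$. The strong $L^2(\pi)$ convergences $D\tilde{\chi}_\rho\to\tilde{\xi}$ and $\rho\mathrm{E}^\pi[\tilde{\chi}_\rho^2]\to 0$ recalled in Section \ref{S:AssumptionMainResult} then allow $\tilde{\lambda}_\rho$ to be replaced by $\tilde{\lambda}$ in the limit. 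The invariance (\ref{Eq:ViablePair2}) follows by a parallel It\^o argument applied to $\tilde{f}(\gamma^\epsilon_t)$ for $\tilde{f}\in\mathcal{D}(\tilde{L})$: after rearrangement and division by $\epsilon/\delta^2$, the right-hand side consists of a boundary term times $\delta^2/\epsilon$, a drift term times $\delta/\epsilon$, and a martingale times $\delta^2/\epsilon$, all of which vanish as $\epsilon\downarrow 0$ because $\epsilon/\delta\uparrow\infty$ forces $\delta/\epsilon\downarrow 0$, giving $\int \tilde{L}\tilde{f}\,d\mathrm{P}_t = 0$ for a.e.\ $t$.

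The principal obstacle is the quenched nature of the conclusion: a single null set of $\gamma$'s must work across all subsequences, all test functions $\tilde{f}$, and all continuity points of the limit; annealed homogenization tools do not suffice. This will be handled by leaning on the almost-sure ergodic theorems for controlled random diffusions developed in Appendix \ref{S:QuenchedErgodicTheorems}, combined with a standard separability argument over a countable dense subfamily of $\mathcal{D}(\tilde{L})$ and of continuous bounded functionals on $\mathcal{Z}\times\mathcal{Z}\times\Gamma$. A secondary technical point is the careful coupling of the regularization parameter $\rho$ to $\epsilon$ so that the two limits $\rho\downarrow 0$ and $\epsilon\downarrow 0$ can be taken simultaneously without loss of control on the $\rho\frac{\epsilon}{\delta}\tilde{\chi}_\rho$ remainder.
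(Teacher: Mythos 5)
Your proposal follows essentially the same route as the paper's proof in Section~\ref{S:ProofLLN}: tightness of the control marginal of $\mathrm{P}^{\epsilon,\Delta,\gamma}$ by Chebyshev and (\ref{Eq:Ubound}), tightness of the $\Gamma$-marginal and of $\bar{X}^{\epsilon,\gamma}$ via the resolvent corrector $\tilde{\chi}_{\rho}$ and It\^{o}'s formula, identification of the limit through a martingale problem with $\tilde{\lambda}_{\rho}$ passed to $\tilde{\lambda}$ by the $L^{2}(\pi)$-convergence $D\tilde{\chi}_{\rho}\to\tilde{\xi}$, and verification of (\ref{Eq:ViablePair2}) by an It\^{o} argument on the environment process $\tau_{\bar{Y}^{\epsilon}_{t}}\gamma$ rescaled by $\delta^{2}/\epsilon$, all quenched via the ergodic theorems of Appendix~\ref{S:QuenchedErgodicTheorems}.

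Two technical points, however, need more care than your sketch gives. First, the coupling $\rho\epsilon/\delta\to 0$ is not by itself strong enough. The only $\rho$-uniform a priori estimate on the corrector is $\rho\,\mathrm{E}^{\pi}\tilde{\chi}_{\rho}^{2}\le K$, so the remainder $\rho\frac{\epsilon}{\delta}\int_{0}^{t}\tilde{\chi}_{\rho}(\tau_{\bar{Y}^{\epsilon}_{s}}\gamma)\,ds$ is of order $\sqrt{\rho}\,\epsilon/\delta$ and requires $\rho(\epsilon/\delta)^{2}\to 0$, while the boundary contribution $\delta\,\tilde{\chi}_{\rho}$ needs $\delta^{2}/\rho\to 0$; since $\epsilon/\delta\to\infty$, neither follows from $\rho\epsilon/\delta\to 0$ alone. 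Your parenthetical that ``$\tilde{\chi}_{\rho}$ is $L^{\infty}$-bounded'' must not be read as uniform in $\rho$, since $\mathcal{R}_{\rho}$ contracting on $L^{\infty}$ only gives $\rho\|\tilde{\chi}_{\rho}\|_{\infty}\le\|\tilde{b}\|_{\infty}$. The paper fixes $\rho=\delta^{2}/\epsilon$, which sits inside the admissible window $\delta^{2}\ll\rho\ll(\delta/\epsilon)^{2}$ and makes both remainders of order $\sqrt{\epsilon}$. Second, you never pin down the $\Delta$-window defining $\mathrm{P}^{\epsilon,\Delta,\gamma}$: the paper needs $\Delta\downarrow 0$ together with $\Delta/(\delta^{2}/\epsilon)\uparrow\infty$ and $\Delta\in\mathcal{H}^{N_{\eta}}_{1}$ in order for Lemma~\ref{L:ErgodicTheorem4} to apply and for the time-shift error $B^{\epsilon,\gamma}_{1,2}$ (the mismatch between $\lambda_{\rho}(\bar{X}^{\epsilon}_{s},\cdot)$ and $\lambda_{\rho}(\bar{X}^{\epsilon}_{\theta},\cdot)$ inside the $\Delta$-average) to vanish. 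Without an explicit scale for $\Delta$ coupled to $\delta^{2}/\epsilon$, your appeal to the quenched ergodic theorem for the occupation measure has no content, because the theorem is stated precisely in terms of such admissible windows.
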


Next, we are ready to state the quenched Laplace principle for $\left\{X^{\epsilon},\epsilon>0\right\}$.

\begin{theorem}
\label{T:MainTheorem2} Let $\{\left(X^{\epsilon},Y^{\epsilon}\right),\epsilon>0\}$ be, for fixed $\gamma\in\Gamma$,  the
unique strong solution to (\ref{Eq:Main}) and assume that $\epsilon/\delta\uparrow \infty$. We assume that Conditions
\ref{A:Assumption1} and  \ref{A:Assumption2} hold. Define
\begin{equation}
S(\phi)=\inf_{(\phi,\mathrm{P})\in\mathcal{V}}\left[  \frac{1}{2}\int_{\mathcal{Z}\times\mathcal{Z}\times\mathcal{Y}\times\lbrack
0,T]}\left[\left\Vert z_{1}\right\Vert ^{2}+\left\Vert z_{2}\right\Vert ^{2}\right]\mathrm{P}(dz_{1}dz_{2}dydt)\right]  ,
\label{Eq:GeneralRateFunction}%
\end{equation}
with the convention that the infimum over the empty set is $\infty$. Then, we have
\begin{enumerate}
 \item The level sets of $S$ are compact. In particular, for each $s<\infty$, the set
\begin{equation*}
\Phi_{s}=\{\phi\in\mathcal{C}([0,T];\mathbb{R}^{m}):S(\phi)\leq s\}
\label{Def:LevelSets}%
\end{equation*}
is a compact subset of $\mathcal{C}([0,T];\mathbb{R}^{m})$.
\item For
every bounded and continuous function $h$ mapping $\mathcal{C}%
([0,T];\mathbb{R}^{m})$ into $\mathbb{R}$
\begin{equation*}
\lim_{\epsilon\downarrow0}-\epsilon\ln\mathbb{E}_{x_{0},y_{0}}\left[  \exp\left\{
-\frac{h(X^{\epsilon,\gamma})}{\epsilon}\right\}  \right]  = \inf_{\phi\in
\mathcal{C}([0,T];\mathbb{R}^{m}), \phi_{0}=x_{0}}\left[  S(\phi)+h(\phi)\right]  .
\end{equation*}
almost surely with respect to $\gamma\in\Gamma$.
\end{enumerate}

In other words, under the imposed assumptions,
$\{X^{\epsilon,\gamma},\epsilon>0\}$ satisfies the quenched large deviations principle with action functional $S$.
\end{theorem}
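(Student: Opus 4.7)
The plan is to prove both assertions via the weak convergence/stochastic control framework of \cite{DupuisEllis}, using the variational representation (\ref{Eq:VariationalRepresentation}) together with Theorem \ref{T:MainTheorem1} (almost-sure tightness plus viability of subsequential limits) and the ergodic theorems of Appendix \ref{S:QuenchedErgodicTheorems}. Writing $V^{\epsilon,\gamma}$ for the right-hand side of (\ref{Eq:VariationalRepresentation}), the Laplace principle reduces to showing the matching bounds $\liminf V^{\epsilon,\gamma} \geq \inf_\phi[S(\phi)+h(\phi)]$ and $\limsup V^{\epsilon,\gamma} \leq \inf_\phi[S(\phi)+h(\phi)]$ almost surely in $\gamma$, together with compactness of $\Phi_s$.

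For the lower bound I would pick an $\eta$-nearly optimal control $u^\epsilon=(u_1^\epsilon,u_2^\epsilon)\in\mathcal{A}$ for each $\epsilon$, and invoke Lemma \ref{L:RestrictingTheControl} to restrict attention to controls satisfying (\ref{Eq:Ubound}) and the additional uniform-integrability bound required by Theorem \ref{T:MainTheorem1}. Applying that theorem gives a subsubsequence along which $(\bar X^{\epsilon,\gamma},\mathrm{P}^{\epsilon,\Delta,\gamma})$ converges in distribution to some $(\bar X,\mathrm{P})\in\mathcal{V}$ almost surely in $\gamma$. Fatou's lemma combined with lower semicontinuity of the quadratic cost on $\mathcal{P}(\mathcal Z\times\mathcal Z\times\Gamma\times[0,T])$ and continuity of $h$ yields
$$
\liminf_{\epsilon\downarrow 0} V^{\epsilon,\gamma} \;\geq\; \mathbb{E}\Bigl[\tfrac{1}{2}\!\int (\|z_1\|^2+\|z_2\|^2)\,\mathrm{P}(dz_1 dz_2 d\gamma dt) + h(\bar X)\Bigr] - \eta,
$$
which by the definition (\ref{Eq:GeneralRateFunction}) of $S$ on viable pairs is at least $\inf_\phi[S(\phi)+h(\phi)]-\eta$; sending $\eta\downarrow 0$ closes this direction.

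For the upper bound, fix $\eta>0$ and a viable pair $(\phi,\mathrm{P})\in\mathcal{V}$ with $\tfrac12\int\|z\|^2\mathrm{P}(dz dt)+h(\phi)\leq\inf+\eta$. Disintegrate $\mathrm{P}_t(dz_1 dz_2 d\gamma)=\eta_t(dz_1 dz_2|\gamma)\mu_t(d\gamma)$, noting via (\ref{Eq:ViablePair2}) and the characterization in Proposition \ref{P:NewMeasureRandomCase} that $\mu_t=\pi$ (the invariant measure of the environment process). The task is then to construct an admissible control $u^\epsilon$, in feedback form through the environment variable $\tau_{\bar Y_t^\epsilon}\gamma$ after a small mollification of $\eta_t$ in $t$, so that the controlled slow variable $\bar X^\epsilon$ tracks $\phi$, the fast variable equilibrates to $\pi$ on the intermediate scale $\Delta$, the corrector term $D\tilde\chi_\rho$ contributes the required $\tilde\xi$-piece in $\tilde\lambda$ as $\rho\downarrow 0$, and $\mathrm{P}^{\epsilon,\Delta,\gamma}\Rightarrow\mathrm{P}$. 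The variational representation then gives $V^{\epsilon,\gamma}\le \mathbb{E}[\tfrac12\int(\|u_1^\epsilon\|^2+\|u_2^\epsilon\|^2)ds+h(\bar X^\epsilon)]$, and passage to the limit via the quenched ergodic theorems of Appendix \ref{S:QuenchedErgodicTheorems} yields $\limsup V^{\epsilon,\gamma}\leq S(\phi)+h(\phi)+\eta$. Compactness of $\Phi_s$ follows from a standard argument: the ODE (\ref{Eq:ViablePair1}) together with boundedness of $\tilde c,\tilde g,\tilde\sigma,\tilde\tau_i$ and $\tilde\xi\in L^2(\pi)$ gives via Cauchy-Schwarz a uniform $\tfrac12$-Hölder bound on $\phi\in\Phi_s$, so Arzelà-Ascoli yields relative compactness, and lower semicontinuity of $(\phi,\mathrm{P})\mapsto\tfrac12\int\|z\|^2\mathrm{P}(dz dt)$ on $\mathcal V$ gives closedness.

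The main obstacle will be the upper-bound construction of the near-optimal control. Unlike in the periodic setting of \cite{Spiliopoulos2013}, the coefficients are merely stationary and ergodic random fields, so the fast motion lives on the non-compact space $\mathcal Y$, and one must realize the prescribed $\pi$-marginal quenchedly in $\gamma$ rather than by pointwise periodic averaging. Simultaneously, because $\epsilon/\delta\uparrow\infty$, the highly oscillatory term $\tfrac{\epsilon}{\delta}\int_0^T b(\bar Y_s^\epsilon,\gamma)\,ds$ must be handled via the corrector $\tilde\chi_\rho$, with a careful $\rho\downarrow 0$ limit that commutes with the $\epsilon\downarrow 0$ limit and is compatible with the control perturbation. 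The interplay between this cell-problem correction, the mollification-and-replication scheme for the feedback control, and the quenched ergodic averaging is the delicate technical heart of the argument, and is where Appendix \ref{S:QuenchedErgodicTheorems} and Theorem \ref{T:MainTheorem1} are used decisively.
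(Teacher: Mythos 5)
Your lower bound matches the paper's argument: restrict to controls satisfying the admissibility bounds via Lemma \ref{L:RestrictingTheControl}, pass to a convergent subsequence of $(\bar X^{\epsilon,\gamma},\mathrm{P}^{\epsilon,\Delta,\gamma})$ via Theorem \ref{T:MainTheorem1}, and use Fatou plus lower semicontinuity of the quadratic cost. The compactness statement is handled similarly (the paper simply cites \cite{DupuisSpiliopoulos,FW1}; your Arzel\`a--Ascoli sketch is a fine substitute).

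Your upper bound, however, takes a genuinely different route and has a gap as written. You propose to fix an $\eta$-near-optimal viable pair $(\phi,\mathrm{P})$, disintegrate $\mathrm{P}_t(dz_1dz_2d\gamma)=\eta_t(dz_1dz_2\mid\gamma)\pi(d\gamma)$, and then realize the stochastic kernel $\eta_t$ directly as a feedback control after a mollification in $t$, so that $\mathrm{P}^{\epsilon,\Delta,\gamma}\Rightarrow\mathrm{P}$. But a single feedback control $u^\epsilon(t)=u(t,\tau_{\bar Y_t^\epsilon}\gamma)$ produces, for each $(t,\gamma)$, a Dirac mass in the $z$-marginal of $\mathrm{P}^{\epsilon,\Delta,\gamma}_t$; it cannot converge to a nondegenerate conditional distribution $\eta_t(\cdot\mid\gamma)$. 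To realize a genuinely relaxed control one would need a chattering/randomization construction, which you do not supply; alternatively, one needs the reduction from relaxed to ordinary controls. The paper sidesteps this entirely: it proves in Section \ref{S:ProofTheoremExplicitLDP} (via Jensen's inequality and the affine/quadratic structure) that the relaxed local rate function $L^r$ equals the ordinary one $L^o$, derives the closed-form $L^o(x,v)=\frac12(v-r(x))^Tq^{-1}(x)(v-r(x))$, and then in Section \ref{S:LDPrelaxedForm} constructs the \emph{explicit} feedback control $\tilde u_{i,\rho}(t,x,\gamma)$ built from $q^{-1}(x)(\dot\psi_t-r(x))$ and the corrector $D\tilde\chi_\rho$. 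This control is bounded and Lipschitz, so verifying the admissibility conditions needed for Lemma \ref{L:ErgodicTheorem4} is automatic. To repair your argument along your own lines, you would need either to add the Jensen-type reduction to Dirac-valued $\eta_t$ (essentially re-proving the $L^r=L^o$ step) or to carry out a chattering construction on the non-compact state space $\mathcal Y$, together with a verification that the resulting controls satisfy the bounds \ref{Eq:UniformlySquareIntegrableControlsAdditional} and \ref{Eq:UniformlySquareIntegrableControlsAdditionalb}; the paper's choice of the explicit optimal control is cleaner precisely because it avoids both issues.
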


Actually, it turns out that in this case we can compute the quenched action functional in closed form.

\begin{theorem}
\label{T:MainTheorem3} Let $\{\left(X^{\epsilon,\gamma},Y^{\epsilon,\gamma}\right),\epsilon>0\}$ be, for fixed $\gamma\in\Gamma$, the unique strong
solution to (\ref{Eq:Main}). Under Conditions
\ref{A:Assumption1} and \ref{A:Assumption2}, $\{X^{\epsilon,\gamma},\epsilon>0\}$
satisfies, almost surely with respect to $\gamma\in\Gamma$, the large deviations principle with rate function
\begin{equation*}
S(\phi)=%
\begin{cases}
\frac{1}{2}\int_{0}^{T}(\dot{\phi}(s)-r(\phi(s)))^{T}q^{-1}(\phi(s)%
)(\dot{\phi}(s)-r(\phi(s)))ds & \text{if }\phi\in\mathcal{AC}%
([0,T];\mathbb{R}^{m}) \text{  and } \phi(0)=x_{0}\\
+\infty & \text{otherwise.}%
\end{cases}
\label{Eq:ActionFunctional1}%
\end{equation*}
where
\[
r(x)=\lim_{\rho\downarrow0}\mathrm{E}^{\pi}\left[  \tilde{c}(x,\cdot) +D\tilde{\chi}_{\rho
}(\cdot) \tilde{g}(x,\cdot)\right]  =\mathrm{E}^{\pi}[\tilde{c}(x,\cdot)+\tilde{\xi}%
(\cdot)\tilde{g}(x,\cdot)]
\]%
\begin{align}
q(x)&=\lim_{\rho\downarrow0}\mathrm{E}^{\pi}\left[  (\tilde{\sigma}(x,\cdot)+D\tilde{\chi}_{\rho}%
(\cdot)\tilde{\tau}_{1}(\cdot))(\tilde{\sigma}(x,\cdot)+D\tilde{\chi}_{\rho}%
(\cdot)\tilde{\tau}_{1}(\cdot))^{T}+\left(D\tilde{\chi}_{\rho}%
(\cdot)\tilde{\tau}_{2}(\cdot)\right)\left(D\tilde{\chi}_{\rho}%
(\cdot)\tilde{\tau}_{2}(\cdot)\right)^{T}\right]\nonumber\\
&=\mathrm{E}^{\pi}\left[  (\tilde{\sigma}(x,\cdot)+\tilde{\xi}(\cdot)\tilde{\tau}_{1}(\cdot))(\tilde{\sigma}(x,\cdot)+\tilde{\xi}
(\cdot)\tilde{\tau}_{1}(\cdot))^{T}+\left(\tilde{\xi}
(\cdot)\tilde{\tau}_{2}(\cdot)\right)\left(\tilde{\xi}
(\cdot)\tilde{\tau}_{2}(\cdot)\right)^{T}\right]\nonumber
\end{align}
\end{theorem}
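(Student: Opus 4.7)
The plan is to reduce the variational rate function given by Theorem \ref{T:MainTheorem2} to the explicit quadratic expression, by combining the invariance constraint in the definition of viable pairs with a Jensen's inequality argument and a finite-dimensional Lagrangian minimization. Throughout, the strategy uses only the structure already encoded in Definition \ref{D:ViablePair}, so one does not need to revisit the limit $\epsilon\downarrow 0$.

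\textbf{Step 1: identifying the $\gamma$-marginal.} The invariance condition \eqref{Eq:ViablePair2} states that for a.e.\ $t$, $\int \tilde L\tilde f(\gamma)\,\mathrm{P}_t(dz_1 dz_2 d\gamma)=0$ for all $\tilde f\in\mathcal{D}(\tilde L)$. Denoting by $\mu_t(d\gamma)$ the $\gamma$-marginal of $\mathrm{P}_t$, this says $\int \tilde L\tilde f\,d\mu_t=0$ for all $\tilde f\in\mathcal{D}(\tilde L)$, i.e.\ $\mu_t$ is invariant under $\{\gamma_s\}$. By Proposition \ref{P:NewMeasureRandomCase}, $\pi$ is the unique invariant ergodic measure of $\{\gamma_s\}$, so $\mu_t=\pi$ for a.e.\ $t\in[0,T]$. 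Therefore I may disintegrate $\mathrm{P}_t(dz_1 dz_2 d\gamma)=\eta_{t,\gamma}(dz_1 dz_2)\,\pi(d\gamma)$ for some $\pi$-measurable family of probability measures $\eta_{t,\gamma}$ on $\mathcal{Z}\times\mathcal{Z}$.

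\textbf{Step 2: Jensen reduction to Dirac conditionals.} Write $\bar z_i(t,\gamma)=\int z_i\,\eta_{t,\gamma}(dz_1 dz_2)$. Because the ODE \eqref{Eq:ViablePair1} depends on $\eta_{t,\gamma}$ only through $(\bar z_1,\bar z_2)$ (the integrand $\tilde\lambda$ is affine in $(z_1,z_2)$), while by Jensen $\tfrac12\int(\|z_1\|^2+\|z_2\|^2)\eta_{t,\gamma}(dz_1 dz_2)\geq\tfrac12(\|\bar z_1(t,\gamma)\|^2+\|\bar z_2(t,\gamma)\|^2)$, the infimum in \eqref{Eq:GeneralRateFunction} is unchanged if one restricts to measures with $\eta_{t,\gamma}=\delta_{(\bar z_1(t,\gamma),\bar z_2(t,\gamma))}$. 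The ODE constraint then reads
\begin{equation*}
\dot\phi_t - r(\phi_t) = \int\!\Bigl[\bigl(\tilde\sigma(\phi_t,\gamma)+\tilde\xi(\gamma)\tilde\tau_1(\gamma)\bigr)\bar z_1(t,\gamma) + \tilde\xi(\gamma)\tilde\tau_2(\gamma)\bar z_2(t,\gamma)\Bigr]\pi(d\gamma),
\end{equation*}
where $r(x)=\mathrm{E}^\pi[\tilde c(x,\cdot)+\tilde\xi(\cdot)\tilde g(x,\cdot)]$ as in the statement. This shows in particular that a viable pair exists for $\phi$ only if $\phi\in\mathcal{AC}$ and $\phi(0)=x_0$.

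\textbf{Step 3: pointwise quadratic minimization.} For each $t$, one minimizes $\tfrac12\int(\|\bar z_1(t,\gamma)\|^2+\|\bar z_2(t,\gamma)\|^2)\pi(d\gamma)$ over $(\bar z_1,\bar z_2)$ subject to the linear constraint above. Introducing a Lagrange multiplier $\alpha(t)\in\mathbb{R}^m$ and differentiating pointwise in $\gamma$ yields the minimizers
\begin{equation*}
\bar z_1^*(t,\gamma)=\bigl(\tilde\sigma(\phi_t,\gamma)+\tilde\xi(\gamma)\tilde\tau_1(\gamma)\bigr)^T\alpha(t),\qquad \bar z_2^*(t,\gamma)=\bigl(\tilde\xi(\gamma)\tilde\tau_2(\gamma)\bigr)^T\alpha(t).
\end{equation*}
Substitution into the constraint gives $q(\phi_t)\alpha(t)=\dot\phi_t-r(\phi_t)$; by Condition \ref{A:Assumption1}(2) the matrix $q(\phi_t)$ is positive definite, so $\alpha(t)=q(\phi_t)^{-1}(\dot\phi_t-r(\phi_t))$ and the minimum cost at time $t$ equals $\tfrac12\alpha(t)^T q(\phi_t)\alpha(t)=\tfrac12(\dot\phi_t-r(\phi_t))^T q(\phi_t)^{-1}(\dot\phi_t-r(\phi_t))$. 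Integrating in $t$ yields the upper bound on $S(\phi)$.

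\textbf{Step 4: attainment.} For any $\phi\in\mathcal{AC}([0,T];\mathbb{R}^m)$ with $\phi(0)=x_0$ and finite quadratic integral, define $\mathrm{P}_t=\delta_{(\bar z_1^*(t,\gamma),\bar z_2^*(t,\gamma))}\otimes\pi(d\gamma)$ with $(\bar z_1^*,\bar z_2^*)$ as above. Then $\mathrm{P}$ satisfies the ODE \eqref{Eq:ViablePair1} by construction, and satisfies \eqref{Eq:ViablePair2} because its $\gamma$-marginal is the invariant $\pi$. Hence $(\phi,\mathrm{P})\in\mathcal V$ and the lower bound is matched, yielding the claimed formula. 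If $\phi$ is not absolutely continuous or the quadratic integral is infinite, no viable pair exists and $S(\phi)=+\infty$.

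\textbf{Main obstacle.} The delicate step is Step~1: rigorously passing from \eqref{Eq:ViablePair2} to $\mu_t=\pi$. This requires that the set $\{\tilde L\tilde f:\tilde f\in\mathcal D(\tilde L)\}$ separates invariant measures on $(\Gamma,\mathcal G)$, together with uniqueness of the invariant measure given by Proposition \ref{P:NewMeasureRandomCase}. A careful density/approximation argument on $\mathcal{D}(\tilde L)$ (using the generalized divergence structure of Condition \ref{A:Assumption2}) is needed. The subsequent Lagrangian step is standard once the marginal is identified, because $\alpha(t)$ is only $m$-dimensional even though $(\bar z_1,\bar z_2)$ depends on the infinite-dimensional variable $\gamma$.
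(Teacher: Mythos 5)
Your proposal follows essentially the same route as the paper: decompose $\mathrm{P}_t(dz_1\,dz_2\,d\gamma)=\eta_{t,\gamma}(dz_1\,dz_2)\,\pi(d\gamma)$ using the invariance constraint \eqref{Eq:ViablePair2} together with the uniqueness in Proposition \ref{P:NewMeasureRandomCase}, pass from the relaxed (measure-valued) to the ordinary control formulation via Jensen's inequality and the affineness of $\tilde\lambda$ in $(z_1,z_2)$, and then solve a pointwise constrained quadratic minimization; the attaining control you write down is the one the paper constructs as the $L^2(\pi)$-limit of $\tilde u_\rho$. The only real departure is in Step~3, where you obtain the lower bound via a Lagrange multiplier $\alpha(t)\in\mathbb R^m$ while the paper uses a direct Cauchy--Schwarz computation (writing $q^{-1}=W^TW$ and showing $\|W\hat v\|\le 1$ for controls normalized in $L^2(\pi)$); both are valid since the problem is convex with affine constraints, and both implicitly rely on $q(x)$ being nondegenerate, a point the paper also does not spell out beyond Condition \ref{A:Assumption1}(2).
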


Notice that the coefficients $r(x)$ and $q(x)$ that enter into the
action functional are those obtained if we had first
taken to (\ref{Eq:Main}) $\delta\downarrow 0$ with $\epsilon$ fixed
and then consider the large deviations for the homogenized system.
This is in accordance to intuition since in the case $\epsilon/\delta\uparrow\infty$, $\delta$ goes to zero faster than $\epsilon$. This implies that homogenization should occur first as it indeed does and then large deviations start playing a role.

\section{Proof of Theorem \ref{T:MainTheorem1}}\label{S:ProofLLN}

In this section we prove Theorem \ref{T:MainTheorem1}. Tightness is established in Subsection \ref{S:Tightness}, whereas the identification of the limit point is done in Subsection \ref{S:WeakConvergence}.

\subsection{Tightness of the controlled pair $\left\{  (\bar{X}^{\epsilon,\gamma
},\mathrm{P}^{\epsilon,\Delta,\gamma}), \epsilon,\Delta>0 \right\}  $.}

\label{S:Tightness}

In this section we prove that the family $\{(\bar{X}^{\epsilon,\gamma
},\mathrm{P}^{\epsilon,\Delta,\gamma}),\epsilon>0\}$, is almost surely tight with respect to $\gamma\in\Gamma$ where $\Delta
=\Delta(\epsilon)\downarrow0$. The following proposition takes care of
tightness and uniform integrability of $\{\mathrm{P}^{\epsilon,\Delta,\gamma
},\epsilon>0\}$.

\begin{lemma}
\label{L:TightnessOccupationalMeasures} Assume Conditions \ref{A:Assumption1} and \ref{A:Assumption2}.
Let $\{u^{\epsilon,\gamma},\epsilon>0,\gamma\in\Gamma\}$ be a family of
controls in $\mathcal{A}$ such that Conditions \ref{Eq:UniformlySquareIntegrableControlsAdditional} and \ref{Eq:UniformlySquareIntegrableControlsAdditionalb} of Lemma \ref{L:ErgodicTheorem4} hold.
The following hold

\begin{enumerate}
\item {For every $\eta>0$, there is a set $N_{\eta}$ (the same $N_{\eta}$ identified in Lemma \ref{L:ErgodicTheorem4}) with $\pi(N_{\eta})\geq 1-\eta$ such that for every $\gamma\in N_{\eta}$ and for every bounded sequence $\Delta\in \mathcal{H}^{N_{\eta}}_{1}$ (i.e. a sequence that satisfies Condition \ref{A:h_functionUniform}), the family $\{\mathrm{P}^{\epsilon,\Delta,\gamma},\epsilon>0\}$ is
tight as $\epsilon\downarrow 0$.}

\item {The family $\{\mathrm{P}^{\epsilon,\Delta,\gamma},\epsilon>0\}$ is
uniformly integrable, in the sense that
\[
\lim_{M\rightarrow\infty}\sup_{\epsilon>0,\gamma\in\Gamma}\mathbb{E}\int_{\{(z_{1},z_{1})\in\mathcal{Z}^{2}:\left[\left\Vert z_{1}\right\Vert+\left\Vert z_{2}\right\Vert\right]\geq M\}\times\Gamma\times\lbrack
0,T]}\left[\left\Vert z_{1}\right\Vert+\left\Vert z_{2}\right\Vert\right]\mathrm{P}^{\epsilon,\Delta,\gamma}(dz_{1}dz_{1}d\tilde{\gamma}dt)=0
\]
}
\end{enumerate}
\end{lemma}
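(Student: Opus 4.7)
The plan is to verify tightness of each of the three non-trivial marginals of $\mathrm{P}^{\epsilon,\Delta,\gamma}$ (the $[0,T]$ marginal is automatic since $[0,T]$ is compact) and then combine them via the standard fact that tightness of each marginal implies tightness on the product. Part (ii), the uniform integrability statement, will fall out as a byproduct of the $L^{2}$ estimate used for the control marginals.

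For the $\mathcal{Z}\times\mathcal{Z}$ marginals, the strategy is a direct computation from the definition of $\mathrm{P}^{\epsilon,\Delta,\gamma}$. Using Fubini and the convention that $u_{i}^{\epsilon}(s)=0$ for $s>T$, one obtains
\[
\int_{\mathcal{Z}\times\mathcal{Z}\times\Gamma\times[0,T]}\!\!\bigl(\|z_{1}\|^{2}+\|z_{2}\|^{2}\bigr)\,d\mathrm{P}^{\epsilon,\Delta,\gamma}
\;\leq\;\int_{0}^{T}\bigl(\|u_{1}^{\epsilon}(s)\|^{2}+\|u_{2}^{\epsilon}(s)\|^{2}\bigr)\,ds,
\]
whose expectation is uniformly bounded by the hypothesis (\ref{Eq:Ubound}). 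Chebyshev's inequality then gives tightness of the $\mathcal{Z}\times\mathcal{Z}$ marginals uniformly in $\epsilon,\Delta,\gamma$. The same bound immediately yields part (ii):
\[
\mathbb{E}\!\int_{\{\|z_{1}\|+\|z_{2}\|\geq M\}}\!\!\bigl(\|z_{1}\|+\|z_{2}\|\bigr)\,d\mathrm{P}^{\epsilon,\Delta,\gamma}
\;\leq\;\frac{1}{M}\,\mathbb{E}\!\int\bigl(\|z_{1}\|+\|z_{2}\|\bigr)^{2}d\mathrm{P}^{\epsilon,\Delta,\gamma}\;\leq\;\frac{C}{M}\,,
\]
which vanishes as $M\to\infty$ uniformly in $\epsilon$ and $\gamma$.

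The main obstacle is tightness of the $\Gamma$ marginal, since $\Gamma$ is not compact: here the quenched ergodic theorem from Lemma \ref{L:ErgodicTheorem4} must be invoked. Since $\pi$ is a probability measure on a Polish space, it is inner regular, so for any $\eta>0$ there exists a compact set $K_{\eta}\subset\Gamma$ with $\pi(K_{\eta})\geq 1-\eta/2$, and by Urysohn's lemma a continuous function $\tilde{\phi}_{\eta}:\Gamma\to[0,1]$ with $\tilde{\phi}_{\eta}\equiv 1$ on $K_{\eta}^{c}$ and $\mathrm{E}^{\pi}[\tilde{\phi}_{\eta}]\leq\eta/2$. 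The $\Gamma$-marginal mass outside $K_{\eta}$ is bounded above by
\[
\int_{0}^{T}\frac{1}{\Delta}\int_{t}^{t+\Delta}\tilde{\phi}_{\eta}\!\left(\tau_{\bar{Y}_{s}^{\epsilon,\gamma}}\gamma\right)ds\,dt.
\]
Applying Lemma \ref{L:ErgodicTheorem4} to $\tilde{\phi}_{\eta}$ (using the standing assumptions (\ref{Eq:UniformlySquareIntegrableControlsAdditional}) and (\ref{Eq:UniformlySquareIntegrableControlsAdditionalb}) on the controls, together with the admissible choice of $\Delta\in\mathcal{H}^{N_{\eta}}_{1}$), this quantity converges, for $\gamma$ in the set $N_{\eta}$ furnished by that lemma, to $T\cdot\mathrm{E}^{\pi}[\tilde{\phi}_{\eta}]\leq T\eta/2$. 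Consequently the $\Gamma$-marginal of $\mathrm{P}^{\epsilon,\Delta,\gamma}$ places at most $T\eta$ mass (say) outside $K_{\eta}$ for all sufficiently small $\epsilon$, establishing tightness on $\Gamma$ for $\gamma\in N_{\eta}$.

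Combining the three marginal tightness statements via Prokhorov's theorem yields tightness of $\{\mathrm{P}^{\epsilon,\Delta,\gamma},\epsilon>0\}$ on the product space for each $\gamma\in N_{\eta}$, completing the proof of (i). The crux of the argument is the coupling between the set $N_{\eta}$ selected from the quenched ergodic theorem and the compact set $K_{\eta}$ produced from inner regularity of $\pi$; everything else is a standard Chebyshev-style control of the $L^{2}$-bounded controls.
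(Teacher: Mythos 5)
Your overall architecture matches the paper's: decompose into marginals, handle the $\mathcal{Z}\times\mathcal{Z}$ marginal and the uniform integrability statement (ii) by an $L^{2}$ bound and Chebyshev, and invoke the quenched ergodic theorem (Lemma \ref{L:ErgodicTheorem4}) together with inner regularity of $\pi$ to control the $\Gamma$-marginal. The treatment of the control marginals and of part (ii) is fine. However, the step where you build the continuous function $\tilde{\phi}_{\eta}$ via Urysohn's lemma has a genuine gap, and it is also an unnecessary detour.

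The problem is the following. You want a continuous $\tilde{\phi}_{\eta}:\Gamma\to[0,1]$ with $\tilde{\phi}_{\eta}\equiv 1$ on $K_{\eta}^{c}$ (this is what makes $\int\tilde{\phi}_{\eta}\,d\mathrm{P}^{\epsilon,\Delta,\gamma}_{2,t}$ an upper bound for $\mathrm{P}^{\epsilon,\Delta,\gamma}_{2,t}(K_{\eta}^{c})$) and with $\mathrm{E}^{\pi}[\tilde{\phi}_{\eta}]\leq\eta/2$. But if $\tilde{\phi}_{\eta}$ is continuous, $[0,1]$-valued, and equals $1$ on $K_{\eta}^{c}$, then by continuity $\tilde{\phi}_{\eta}\equiv 1$ on $\overline{K_{\eta}^{c}}\supset\partial K_{\eta}$, so $\mathrm{E}^{\pi}[\tilde{\phi}_{\eta}]\geq\pi(K_{\eta}^{c})+\pi(\partial K_{\eta})$. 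Thus the bound $\mathrm{E}^{\pi}[\tilde{\phi}_{\eta}]\leq\eta/2$ forces $\pi(\partial K_{\eta})$ to be small, which is not guaranteed by compactness of $K_{\eta}$ alone. You cannot escape by taking a slightly larger compact set either: $\Gamma$ is a general Polish space (not locally compact), so a compact set need not admit a relatively compact open neighborhood, and the $\delta$-fattening $\{d(\cdot,K_{\eta})<\delta\}$ need not have compact closure. So the object you are trying to construct may simply not exist.

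The fix, and what the paper actually does, is to skip the continuous-function approximation entirely. Lemma \ref{L:ErgodicTheorem4} is stated for $\tilde{\Psi}\in L^{2}(\Gamma)\cap L^{1}(\pi)$; it does not require continuity in $\gamma$. So you may apply it directly to $\tilde{\Psi}=\mathbf{1}_{K_{\eta}}$, obtaining
\[
\lim_{\epsilon\downarrow 0}\sup_{\gamma\in N_{\eta}}\sup_{0\leq t\leq T}\mathbb{E}\left|\mathrm{P}^{\epsilon,\Delta,\gamma}_{2,t}(K_{\eta})-\pi(K_{\eta})\right|=0,
\]
and since $\pi(K_{\eta})\geq 1-\eta/2$ this yields $\mathbb{E}\bigl[\mathrm{P}^{\epsilon,\Delta,\gamma}_{2,t}(K_{\eta})\bigr]\geq 1-\eta$ for all $\gamma\in N_{\eta}$ and $\epsilon$ small, which is the desired tightness of the $\Gamma$-marginal. (One further point the paper itself is slightly casual about, and which you should keep in mind: the exceptional set $N_{\eta}$ produced by the ergodic theorem depends on the test function, so it should be chosen after $K_{\eta}$ is fixed; the logical order is "fix $\eta$, choose $K_{\eta}$ from tightness of $\pi$, then apply the ergodic lemma to $\mathbf{1}_{K_{\eta}}$ to produce $N_{\eta}$.")
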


\begin{proof}
(i). Let us first prove the first part of the Lemma. It is clear that we can write
\[
\mathrm{P}^{\epsilon,\Delta,\gamma}(A_{1}\times A_{2}\times B\times\Theta)=\int_{\Theta}\mathrm{P}^{\epsilon,\Delta,\gamma}_{t}(A_{1}\times A_{2}\times B)dt
\]
where
\[
\mathrm{P}^{\epsilon,\Delta,\gamma}_{t}(A_{1}\times A_{2}\times B)=\left[  \frac{1}{\Delta}\int_{t}^{t+\Delta}1_{A_{1}}(u_{1}^{\epsilon,\gamma
}(s))1_{A_{2}}(u_{2}^{\epsilon,\gamma}(s))1_{B}\left(  \tau_{\bar{Y}_{s}^{\epsilon}}\gamma\right)
ds\right]  dt,
\]

Let us denote by $\mathrm{P}^{\epsilon,\Delta,\gamma}_{1,t}(A_{1}\times A_{2})$ and by $\mathrm{P}^{\epsilon,\Delta,\gamma}_{2,t}( B)$ the first and second marginals of $\mathrm{P}^{\epsilon,\Delta,\gamma}_{t}(A_{1}\times A_{2}\times B)$ respectively. Namely,
\[
\mathrm{P}^{\epsilon,\Delta,\gamma}_{1,t}(A_{1}\times A_{2})=\mathrm{P}^{\epsilon,\Delta,\gamma}_{t}(A_{1}\times A_{2}\times \Gamma),\text{ and }\mathrm{P}^{\epsilon,\Delta,\gamma}_{2,t}(B)=\mathrm{P}^{\epsilon,\Delta,\gamma}_{t}(\mathcal{Z}\times \mathcal{Z}\times B)
\]

It is clear that tightness of $\{\mathrm{P}^{\epsilon,\Delta,\gamma},\epsilon>0\}$ is a consequence of tightness of
 $\{\mathrm{P}^{\epsilon,\Delta,\gamma}_{1,t},\epsilon>0\}$ and of $\{\mathrm{P}^{\epsilon,\Delta,\gamma}_{2,t},\epsilon>0\}$.

Let us first consider tightness of $\{\mathrm{P}^{\epsilon,\Delta,\gamma}_{1,t},\epsilon>0\}$. For this purpose, we claim that the function
\[
g(r)=\int_{\mathcal{Z}\times\mathcal{Z}\times\lbrack0,T]}\left[\left\Vert
z_{1}\right\Vert ^{2}+\left\Vert
z_{2}\right\Vert ^{2}\right]r(dz_{1}dz_{2}dt),\hspace{0.2cm}r\in\mathcal{P}(\mathcal{Z}%
\times\mathcal{Z}%
\times\lbrack0,T])
\]
is a tightness function, i.e., it is bounded from below and its level sets
$R_{k}=\{r\in\mathcal{P}(\mathbb{R}^{2k}\times\lbrack
0,T]):g(r)\leq k\}$ are relatively compact for each $k<\infty$. Notice that the second marginal of every $r\in\mathcal{P}(\mathcal{Z}
\times\mathcal{Z}\times\lbrack0,T])$ is the Lebesgue measure.

Chebyshev's inequality implies
\[
\sup_{r\in R_{k}}r\left(  \{(z_{1},z_{2})\in\mathcal{Z}%
\times\mathcal{Z}:\left[\left\Vert
z_{1}\right\Vert+\left\Vert
z_{2}\right\Vert\right] >M\}\times\lbrack0,T]\right)  \leq\sup_{r\in R_{k}}\frac
{g(r)}{M^{2}}\leq\frac{k}{M^{2}}.
\]
Hence, $R_{k}$ is tight and thus relatively compact as a subset of
$\mathcal{P}$.

Since $g$ is a tightness function, by Theorem A.3.17 of \cite{DupuisEllis}
tightness of $\{\mathrm{P}^{\epsilon,\Delta,\gamma}_{1,t},\epsilon>0\}$ will follow if we
prove that
\[
\sup_{\epsilon\in(0,1]}\mathbb{E}\left[  g(\mathrm{P}^{\epsilon
,\Delta,\gamma}_{1,t}\otimes \text{Leb}_{[0,T]})\right]  <\infty,
\]
where $\text{Leb}_{[0,T]}$ denotes Lebesgue measure in $[0,T]$. However, by (\ref{Eq:Ubound})
\begin{align}
\sup_{\epsilon\in(0,1]}\mathbb{E}\left[  g(\mathrm{P}^{\epsilon
,\Delta,\gamma}_{1,t}\otimes \text{Leb}_{[0,T]})\right]   &  =\sup_{\epsilon\in(0,1]}\mathbb{E}\left[
\int_{0}^{T}\int_{\mathcal{Z}\times\mathcal{Z}}\left[\left\Vert z_{1}\right\Vert^{2}+\left\Vert z_{2}\right\Vert^{2}\right]\mathrm{P}^{\epsilon,\Delta,\gamma}_{1,t}(dz_{1}dz_{2})dt\right] \nonumber\\
&  =\sup_{\epsilon\in(0,1]}\mathbb{E}\int_{0}^{T}\frac{1}{\Delta}%
\int_{t}^{t+\Delta}\left[\left\Vert u_{1}^{\epsilon}(s)\right\Vert ^{2}+\left\Vert u_{2}^{\epsilon}(s)\right\Vert ^{2}\right]dsdt\nonumber\\
&  <\infty,\nonumber
\end{align}
uniformly in $\gamma\in\Gamma$, which concludes the tightness proof for $\{\mathrm{P}^{\epsilon,\Delta,\gamma}_{1,t},\epsilon>0\}$.

Let us now consider tightness of $\{\mathrm{P}^{\epsilon,\Delta,\gamma}_{2,t},\epsilon>0\}$. For this purpose we notice that for every
 $\gamma\in\Gamma$ and every $\tilde{\phi}\in L^{2}(\Gamma)\cap L^{1}(\pi)$ we have
\[
\int_{\Gamma}\tilde{\phi}(\tilde{\gamma})\mathrm{P}^{\epsilon,\Delta,\gamma}_{2,t}(d\tilde{\gamma})=\frac{1}{\Delta}\int_{t}^{t+\Delta}\tilde{\phi}
\left(\tau_{\bar{Y}^{\epsilon}_{s}}\gamma\right)ds=
\frac{1}{\Delta}\int_{t}^{t+\Delta}\phi\left(\bar{Y}^{\epsilon}_{s},\gamma\right)ds.
\]

Let us fix $\eta>0$. Then, by Lemma \ref{L:ErgodicTheorem4} we know that there exists $N_{\eta}\subset \Gamma$  with $\pi(N_{\eta})\geq 1-\eta$ such that for every bounded sequence $\Delta\in \mathcal{H}^{N_{\eta}}_{1}$ we have

\begin{equation*}
\lim_{\epsilon\downarrow0}\sup_{\gamma\in N_{\eta}}\sup_{0\leq t\leq T}\mathbb{E}\left\vert
\frac{1}{\Delta}\int_{t}^{t+\Delta}\phi\left(\bar{Y}_{s}^{\epsilon},\gamma\right)ds-\bar{\phi}\right\vert =0
\end{equation*}
 or equivalently
\begin{equation}
\lim_{\epsilon\downarrow0}\sup_{\gamma\in N_{\eta}}\sup_{0\leq t\leq T}\mathbb{E}\left\vert
\int_{\Gamma}\tilde{\phi}(\tilde{\gamma})\mathrm{P}^{\epsilon,\Delta,\gamma}_{2,t}(d\tilde{\gamma})-\bar{\phi}\right\vert =0\label{Eq:UsingErgodicThmForTightness}
\end{equation}
 Now, as a probability measure in a Polish space $\pi$ is itself tight. So, there exists a compact subset of $\Gamma$, say $K_{\eta}$, such that
\[
\pi(K_{\eta})\geq 1-\eta/2.
\]


Therefore, using (\ref{Eq:UsingErgodicThmForTightness}) and the latter bound, we get that for $\epsilon$ sufficiently small, say $\epsilon<\epsilon_{0}(\eta)$ and for every $\gamma\in N_{\eta}$ and $t\in[0,T]$, we have

\begin{equation*}
\inf_{\epsilon\in(0,\epsilon_{0}(\eta))}\mathbb{E}\left[\mathrm{P}^{\epsilon,\Delta,\gamma}_{2,t}(K_{\eta})\right]\geq 1-\eta
\end{equation*}
which implies that, uniformly in $\gamma\in N_{\eta}$, the measure valued random variables $\{\mathrm{P}^{\epsilon,\Delta,\gamma}_{2,t}(\cdot),\epsilon\in(0,\epsilon_{0}(\eta))\}$ are tight.

(ii). Uniform integrability of the family  $\{\mathrm{P}^{\epsilon,\Delta,\gamma},\epsilon>0\}$ follows by
\begin{align}
&\mathbb{E}\left[  \int_{\{(z_{1},z_{2})\in\mathcal{Z}\times\mathcal{Z}:\left[\left\Vert z_{1}\right\Vert+\left\Vert z_{2}\right\Vert\right]
>M\}\times\Gamma\times\lbrack0,T]}\left[\left\Vert z_{1}\right\Vert +\left\Vert z_{2}\right\Vert\right]\mathrm{P}%
^{\epsilon,\Delta}(dz_{1}dz_{2}d\tilde{\gamma}dt)\right]  \nonumber\\
&\quad\leq\frac{2}{M}\mathbb{E}\left[
\int_{\mathcal{Z}\times\mathcal{Z}\times\Gamma\times\lbrack0,T]}\left[\left\Vert z_{1}\right\Vert
^{2}+\left\Vert z_{2}\right\Vert
^{2}\right]\mathrm{P}^{\epsilon,\Delta}(dz_{1}dz_{2}d\tilde{\gamma}dt)\right]\nonumber\\
& \quad = \frac{2}{M} \mathbb{E}\int_{0}^{T}\frac{1}{\Delta}%
\int_{t}^{t+\Delta}\left[\left\Vert u_{1}^{\epsilon}(s)\right\Vert ^{2}+\left\Vert u_{2}^{\epsilon}(s)\right\Vert ^{2}\right]dsdt\nonumber
\end{align}
and the fact that
\[
\sup_{\epsilon>0,\gamma\in\Gamma}\mathbb{E}\int_{0}^{T}\frac{1}{\Delta}%
\int_{t}^{t+\Delta}\left[\left\Vert u_{1}^{\epsilon}(s)\right\Vert ^{2}+\left\Vert u_{2}^{\epsilon}(s)\right\Vert ^{2}\right]dsdt  <\infty.
\]
This concludes the proof of the lemma.
\end{proof}

\begin{lemma}
\label{L:TightnessControlledProcess} Assume Conditions \ref{A:Assumption1} and \ref{A:Assumption2}.
Let $\{u^{\epsilon,\gamma},\epsilon>0,\gamma\in\Gamma\}$ be a family of
controls in $\mathcal{A}$ as in Lemma \ref{L:TightnessOccupationalMeasures}. Moreover, fix $\eta>0$, and consider the set $N_{\eta}$ with $\pi(N_{\eta})\geq 1-\eta$ from Lemma \ref{L:ErgodicTheorem4}. Then,  for every $\gamma\in N_{\eta}$, the family $\{\bar{X}^{\epsilon,\gamma},\epsilon>0\}$ is
relatively compact as $\epsilon\downarrow 0$.

\end{lemma}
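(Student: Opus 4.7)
Since $\bar X^{\epsilon,\gamma}_0 = x_0$, relative compactness of $\{\bar X^{\epsilon,\gamma}\}_{\epsilon>0}$ in $\mathcal{C}([0,T];\mathbb{R}^m)$ reduces to a uniform modulus-of-continuity estimate via a standard Arzela--Ascoli-type tightness criterion. Writing (\ref{Eq:Main2}) in integrated form decomposes $\bar X^{\epsilon,\gamma}_t - x_0$ into the oscillatory drift $\int_0^t \tfrac{\epsilon}{\delta}b(\bar Y^\epsilon_s,\gamma)\,ds$, the slow drift $\int_0^t c(\bar X^\epsilon_s,\bar Y^\epsilon_s,\gamma)\,ds$, the control term $\int_0^t \sigma u_1\,ds$, and the diffusion $\sqrt\epsilon\int_0^t\sigma\,dW_s$. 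For the last three, the desired H\"older-type increment bounds follow from routine arguments: boundedness of $c$ and $\sigma$ for the slow drift, Cauchy--Schwarz together with (\ref{Eq:Ubound}) for the control term, and the Burkholder--Davis--Gundy inequality with the vanishing prefactor $\sqrt\epsilon$ for the noise.

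The delicate term is the first one, whose integrand blows up as $\epsilon/\delta\uparrow\infty$. I would regularize it through the corrector $\tilde\chi_\rho$ from Section~\ref{S:AssumptionMainResult} solving $\rho\tilde\chi_\rho - \tilde L\tilde\chi_\rho = \tilde b$. Applying It\^o's formula to $\chi_\rho(\bar Y^\epsilon_t,\gamma)=\tilde\chi_\rho(\tau_{\bar Y^\epsilon_t}\gamma)$ and using $\mathcal{L}^\gamma\chi_\rho = \rho\chi_\rho - b$ together with the explicit form of the generator of $\bar Y^\epsilon$ coming from (\ref{Eq:Main2}) yields the identity
\begin{align*}
\int_0^t \tfrac{\epsilon}{\delta}b(\bar Y^\epsilon_s,\gamma)\,ds
&= -\delta\bigl[\chi_\rho(\bar Y^\epsilon_t,\gamma) - \chi_\rho(y_0,\gamma)\bigr] + \tfrac{\epsilon\rho}{\delta}\int_0^t \chi_\rho(\bar Y^\epsilon_s,\gamma)\,ds \\
&\quad + \int_0^t \bigl[g + \tau_1 u_1 + \tau_2 u_2\bigr]\cdot\nabla_y\chi_\rho\,ds + \sqrt\epsilon\int_0^t \nabla_y\chi_\rho\cdot(\tau_1\,dW_s + \tau_2\,dB_s).
\end{align*}
The plan is to choose $\rho=\rho(\epsilon)\downarrow 0$ so that all four right-hand terms are controlled uniformly. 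The boundary term vanishes once $\delta\|\tilde\chi_\rho\|_\infty\to 0$. The cross-term is controlled in $L^1$ by Cauchy--Schwarz using the uniform $\mathcal{H}^1$-bound $\mathrm{E}^\pi[|D\tilde\chi_\rho|^2]\le K$, boundedness of $g,\tau_1,\tau_2$, the $L^2$-bound (\ref{Eq:Ubound}), and the quenched ergodic theorem (Lemma \ref{L:ErgodicTheorem4}) on $N_\eta$, which converts time integrals of $|\nabla_y\chi_\rho|^2$ into $\mathrm{E}^\pi[|D\tilde\chi_\rho|^2]$. The martingale term is handled by BDG and the same $L^2(\pi)$-bound on $\nabla_y\chi_\rho$, aided by the extra $\sqrt\epsilon$.

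The main obstacle is the residual $\frac{\epsilon\rho}{\delta}\int_0^t \chi_\rho(\bar Y^\epsilon_s,\gamma)\,ds$, which carries a prefactor that does not vanish naively. Here the restriction to $N_\eta$ is essential: the cell equation combined with $\pi$-invariance of $\tilde L$ and the bound $\rho\mathrm{E}^\pi[\tilde\chi_\rho^2]\to 0$ forces $\mathrm{E}^\pi[\tilde\chi_\rho]=0$, so that Lemma~\ref{L:ErgodicTheorem4} produces the cancellation making $\int_0^t \chi_\rho(\bar Y^\epsilon_s,\gamma)\,ds$ small. The delicate bookkeeping is in balancing two constraints on $\rho(\epsilon)$: it must decay slowly enough that $\delta/\rho\to 0$ (so the boundary term vanishes using only the crude bound $\|\tilde\chi_\rho\|_\infty\le \|\tilde b\|_\infty/\rho$), yet fast enough that the ergodic cancellation dominates the $\epsilon\rho/\delta$ prefactor in the residual. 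Once these bounds are combined with the standard estimates for the three non-oscillatory terms, a uniform H\"older-type modulus-of-continuity estimate for $\bar X^{\epsilon,\gamma}$ follows, giving relative compactness in $\mathcal{C}([0,T];\mathbb{R}^m)$ for every $\gamma\in N_\eta$.
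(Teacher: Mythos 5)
The paper handles the oscillatory term very differently from you: it applies It\^o's formula with the \emph{limit} corrector $\chi_{0}$ (the $\mathcal{H}^{1}$-limit of $\chi_{\rho}$ as $\rho\downarrow 0$), not $\chi_{\rho}$ at a positive $\rho(\epsilon)$. This yields the exact decomposition
\[
\bar{X}^{\epsilon,\gamma}_{t_{1}}-\bar{X}^{\epsilon,\gamma}_{t_{2}}
=\int_{t_{1}}^{t_{2}}\lambda(\bar{X}^{\epsilon,\gamma}_{s},\bar{Y}^{\epsilon,\gamma}_{s},u_1,u_2)\,ds
-\delta\bigl[\chi_{0}(\bar{Y}^{\epsilon,\gamma}_{t_{2}})-\chi_{0}(\bar{Y}^{\epsilon,\gamma}_{t_{1}})\bigr]
+\sqrt{\epsilon}\int_{t_{1}}^{t_{2}}(\sigma+\xi\tau_{1})\,dW_{s}+\sqrt{\epsilon}\int_{t_{1}}^{t_{2}}\xi\tau_{2}\,dB_{s}
\]
with \emph{no residual term at all}, since in the limit $\tilde{L}\tilde{\chi}_{0}=-\tilde{b}$ holds (in the sense of Osada's Dirichlet-form calculus). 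The boundary term is then controlled directly by the sublinear-growth estimate $\delta\chi_{0}(y/\delta,\gamma)\to 0$ quoted from Osada's Proposition 3.2, the drift term by Lemma \ref{L:ErgodicTheorem4}, and the martingale term by It\^o isometry, Lemma \ref{L:ErgodicTheorem4}, and the factor $\sqrt{\epsilon}$. So the paper never confronts the $\frac{\epsilon\rho}{\delta}\int\chi_{\rho}$ residual that your route produces.

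Your route, as stated, has a genuine gap in the $\rho$-bookkeeping. You propose to control the boundary term with the crude bound $\|\tilde{\chi}_{\rho}\|_{\infty}\le\|\tilde{b}\|_{\infty}/\rho$, which forces $\rho\gg\delta$. For the residual you invoke $\mathrm{E}^{\pi}[\tilde{\chi}_{\rho}]=0$ and $\rho\,\mathrm{E}^{\pi}[\tilde{\chi}_{\rho}^{2}]\to 0$; the best $L^{1}$ control these give, after replacing the law of $\bar{Y}^{\epsilon}_{s}$ by $\pi$, is $\mathbb{E}\left|\frac{\epsilon\rho}{\delta}\int_{0}^{t}\chi_{\rho}\right|\lesssim t\,\frac{\epsilon}{\delta}\,o(\sqrt{\rho})$, requiring $\rho\ll(\delta/\epsilon)^{2}$. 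The window $\delta\ll\rho\ll\delta^{2}/\epsilon^{2}$ is nonempty only when $\epsilon^{2}\ll\delta$; this is not implied by the standing assumption $\epsilon/\delta\uparrow\infty$ (e.g.\ $\delta=\epsilon^{3}$ violates it), so there is no admissible $\rho(\epsilon)$ in general. The fix is to replace the crude $L^\infty$ bound for the boundary term by the same $L^2(\pi)$-type bound $\mathrm{E}^{\pi}|\chi_{\rho}|=o(1/\sqrt{\rho})$, which relaxes the lower constraint to $\rho\gg\delta^{2}$ and makes the window nonempty ($\rho=\delta^{2}/\epsilon$ works — precisely the choice the paper uses elsewhere in Section \ref{S:WeakConvergence}). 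A second, subtler issue is that Lemma \ref{L:ErgodicTheorem4} applies to a \emph{fixed} function $\tilde{\Psi}\in L^{2}(\Gamma)\cap L^{1}(\pi)$, whereas $\tilde{\chi}_{\rho(\epsilon)}$ varies with $\epsilon$; you would need an additional uniformity argument (e.g.\ exploiting the $\mathcal{H}^1$-convergence $\tilde{\chi}_{\rho}\to\tilde{\chi}_{0}$) to make the ergodic averaging legitimate for the $\rho$-dependent integrand.
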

\begin{proof}
It suffices to prove  that for every $\eta>0$
\[
\lim_{\theta\downarrow0}\limsup_{\epsilon\downarrow0}\mathbb{P}\left[  \sup_{t_{1},t_{2}<T, |t_{1}-t_{2}|<\theta}\left\Vert  \bar{X}%
^{\epsilon,\gamma}_{t_{1}}-\bar{X}^{\epsilon,\gamma}_{t_{2}}\right\Vert
>\eta\right]  =0
\]

Recalling the auxiliary problem (\ref{Eq:RandomCellProblem}) and the discussion succeeding it, we apply
It\^{o} formula (see also \cite{Osada1983}), to rewrite $\bar{X}^{\epsilon,\gamma}_{t_{1}}-\bar
{X}^{\epsilon,\gamma}_{t_{2}}$ as

\begin{eqnarray}
\bar{X}^{\epsilon,\gamma}_{t_{1}}-\bar{X}^{\epsilon,\gamma}_{t_{2}} &=&  \int_{t_{1}}^{t_{2}} \lambda\left(  \bar{X}^{\epsilon,\gamma}_{s},\bar{Y}^{\epsilon,\gamma}_{s}, u_{1}(s),u_{2}(s)\right) ds\nonumber\\
& &\quad -\delta\left[
\chi_{0}\left(  \bar{Y}^{\epsilon,\gamma}_{t_{2}}\right)
-\chi_{0}\left(  \bar{Y}^{\epsilon,\gamma}_{t_{1}}
\right)\right] \nonumber\\
& &\quad+
\sqrt{\epsilon}\int_{t_{1}}^{t_{2}} \left(\sigma+\xi\tau_{1}\right)\left(  \bar{X}^{\epsilon,\gamma}_{s}, \bar{Y}^{\epsilon,\gamma}_{s}\right)
dW_{s}+\sqrt{\epsilon}\int_{t_{1}}^{t_{2}}\xi\tau_{2}\left(  \bar{Y}^{\epsilon,\gamma}_{s}\right)dB_{s}\nonumber\\
&  &=B^{\epsilon,\gamma}_{1}+ B^{\epsilon,\gamma}_{2}+ B^{\epsilon,\gamma}_{3}\nonumber
\end{eqnarray}
where $B^{\epsilon,\gamma}_{i}$ is the $i^{th}$ line of the right hand side of
 the last display.

First we treat the term $B^{\epsilon,\gamma}_{3}$. It suffices to discuss one of the two stochastic integrals, let's say the first one. In
particular, by It\^{o} isometry, Lemma \ref{L:ErgodicTheorem4}, we have, that  there is a set $N_{\eta}$ with $\pi(N_{\eta})\geq 1-\eta$ such that for every $\gamma\in N_{\eta}$,
\[
\lim_{\epsilon\downarrow0}\left|  \mathbb{E}\left\Vert  \int_{t_{1}}^{t_{2}}\left(\tilde{\sigma}\left( \bar{X}^{\epsilon,\gamma}_{s}, \cdot\right)+\tilde{\xi}\tilde{\tau}_{1}\left(\cdot\right)\right)dW_{s}  \right\Vert  ^{2}-
\int_{t_{1}}^{t_{2}}\mathrm{E}^{\pi}\left[  \left\Vert  \left(
\tilde{\sigma}\left( \bar{X}^{\epsilon,\gamma}_{s}, \cdot\right)+\tilde{\xi}\tilde{\tau}_{1}\left(\cdot\right) \right)   \right\Vert  ^{2}\right]ds  \right|  \rightarrow0
\]
as $\epsilon\downarrow0$. In a similar fashion we can also treat the stochastic integral with respect to the Brownian motion $B$. Hence, for every
$\gamma\in N_{\eta}$
\[
\lim_{\epsilon\downarrow0}\mathbb{E}\left\Vert B^{\epsilon,\gamma}_{3}\right\Vert^{2}=0
\]

Next, we treat $B^{\epsilon,\gamma}_{1}$. Lemma \ref{L:ErgodicTheorem4} and the uniform bound (\ref{Eq:UniformlySquareIntegrableControlsAdditional}), implies that for every
$\gamma\in N_{\eta}$
\[
\lim_{|t_{2}-t_{1}|\rightarrow0 }\lim_{\epsilon\downarrow0} \mathbb{E}%
\left\Vert B^{\epsilon,\gamma,t_{2}-t_{1}}_{1}\right\Vert^{2}=0
\]

Similarly, one can show that $\lim_{\epsilon\downarrow0}\mathbb{E}\left\Vert  B^{\epsilon,\gamma}_{2}
\right\Vert  =0$. Therefore, tightness of $\{\bar{X}^{\epsilon,\gamma},\epsilon>0\}$ follows for $\gamma\in N_{\eta}$.
\end{proof}

\subsection{Identification of the limit points.} \label{S:WeakConvergence}
In this section we prove that any weak limit point of the tight sequence
$\left\{  (\bar{X}^{\epsilon,\gamma},\mathrm{P}^{\epsilon,\Delta,\gamma}),
\epsilon>0 \right\}  $ is a viable pair, i.e., it satisfies Definition
\ref{D:ViablePair}. Let $(\bar{X},\mathrm{P})$ be an accumulation point (in
distribution) of $(\bar{X}^{\epsilon,\gamma},\mathrm{P}^{\epsilon
,\Delta,\gamma})$ as $\epsilon,\Delta\downarrow0$. Due to the Skorokhod
representation, we may assume that there is a probability space, where this
convergence holds with probability $1$. The constraint
(\ref{Eq:Ubound}) and Fatou's lemma guarantee that with
probability $1$,
\[
\int_{\mathcal{Z}\times\mathcal{Z}\times\Gamma\times[0,T]}\left[\left\Vert z_{1}\right\Vert^{2}+\left\Vert z_{2}\right\Vert^{2}\right]\bar{\mathrm{P}}(dz_{1}dz_{2} d\gamma
dt)<\infty.
\]

Moreover, since $\mathrm{P}^{\epsilon,\Delta,\gamma}\left(  \mathcal{Z}%
\times\mathcal{Z}\times\Gamma\times[0,t]\right)  =t$ for every $t\in[0,T]$ and using the fact
that $\bar{\mathrm{P}}\left(  \mathcal{Z}\times\mathcal{Z}\times\Gamma\times[0,t]\right)  $ is
continuous as a function of $t\in[0,T]$ and that $\bar{\mathrm{P}}\left(
\mathcal{Z}\times\mathcal{Z}\times\Gamma\times\left\{  t\right\}  \right)  =0$ we obtain
$\bar{\mathrm{P}}\left(  \mathcal{Z}\times\mathcal{Z}\times\Gamma\times[0,t]\right)  =t$ and
that $\bar{\mathrm{P}}$ can be decomposed as $\mathrm{P}(dz_{1}dz_{2} d\gamma
dt)=\mathrm{P}_{t}(dz_{1}dz_{2} d\gamma)dt$ with $\mathrm{P}_{t}(\mathcal{Z}\times\mathcal{Z}\times
\Gamma)=1$.

Let us next prove that $(\bar{X},\bar{\mathrm{P}})$ satisfy
(\ref{Eq:ViablePair1}). We will use the martingale problem. In particular, let
$\zeta$ be a smooth bounded function, $\phi\in\mathcal{C}^{2}(\mathbb{R}^{m})$
compactly supported, $\left\{  \tilde{z}_{j}\right\}  _{j=1}^{q}$ be a
family of bounded, smooth and compactly supported functions and for
$r\in\mathcal{P}\left(  \mathcal{Z}\times\mathcal{Z}\times\Gamma\times[0,T]\right)  $,
$t\in[0,T]$ define
\[
\left(  r,\tilde{z}_{j}\right)  _{t}=\int_{\mathcal{Z}\times\mathcal{Z}\times\Gamma
\times[0,t]} \tilde{z}_{j}(z_{1},z_{2},\gamma,s)r(dz_{1}dz_{2}d\gamma ds)
\]

Then, in order to show (\ref{Eq:ViablePair1}), it is enough to show that for
any $0<t_{1}<t_{2}<\cdots<t_{m}<t<t+r\leq T$, the following limit holds almost
surely with respect to $\gamma\in\Gamma$ as $\epsilon\downarrow0$
\begin{align}
\mathbb{E}  &  \left\{  \zeta\left(  \bar{X}^{\epsilon,\gamma}%
_{t_{i}},(\mathrm{P}^{\epsilon,\Delta,\gamma},z_{j})_{t_{i}}, i\leq m,
j\leq q\right)  \left[  \phi(\bar{X}^{\epsilon,\gamma}_{t+r})-\phi(\bar{X}%
^{\epsilon,\gamma}_{t})\right.  \right. \nonumber\\
&  \left.  \left.  \hspace{2cm}-\int_{t}^{t+r}\left[  \lim_{\rho\rightarrow
0}\int_{\mathcal{Z}\times\mathcal{Z}\times\Gamma}\tilde{\lambda}_{\rho}(\bar{X}^{\epsilon
,\gamma}_{s},\gamma,z_{1},z_{2})\mathrm{P}_{s}(dz_{1}dz_{2}d\gamma) \right]  \nabla\bar{\phi}(\bar
{X}^{\epsilon,\gamma}_{s})ds\right]  \right\}  \rightarrow0
\label{Eq:TargetForConvergenceControlled1}%
\end{align}

Let us define
\[
\mathcal{L}^{\epsilon,\Delta,\rho}_{s}\phi(x)=\int_{\mathcal{Z}\times\mathcal{Z}\times\Gamma
}\tilde{\lambda}_{\rho}(x,\gamma,z_{1},z_{2})\mathrm{P}^{\epsilon,\Delta,\gamma}_{s}(dz_{1}dz_{2}
d\gamma) \nabla \phi(x)
\]
where
\[
\mathrm{P}^{\epsilon,\Delta,\gamma}_{s}(dz_{1}dz_{2} d\gamma) =\frac{1}{\Delta}\int
_{s}^{s+\Delta}1_{z_{1}}(u^{\epsilon}_{1}(\theta))1_{z_{2}}(u^{\epsilon}_{2}(\theta))1_{B}\left(  \tau_{\bar
{Y}^{\epsilon,\gamma}_{\theta}}\gamma\right)  d\theta
\]

Then, weak convergence of the pair $(\bar{X}^{\epsilon,\gamma},\mathrm{P}%
^{\epsilon,\Delta,\gamma})$ and uniform integrability of $\mathrm{P}%
^{\epsilon,\Delta,\gamma}$ as indicated by Lemma
\ref{L:TightnessOccupationalMeasures}, shows that almost surely with respect
to $\gamma\in\Gamma$
\begin{align}
\mathbb{E}  &  \left[  \int_{t}^{t+r}\mathcal{L}^{\epsilon
,\Delta,\rho}_{s}\phi(\bar{X}^{\epsilon,\gamma}_{s})ds-\int_{t}^{t+r}\left[
\lim_{\rho\rightarrow0}\int_{\mathcal{Z}\times\mathcal{Z}\times\Gamma}\tilde{\lambda}_{\rho
}(\bar{X}^{\epsilon,\gamma}_{s},\gamma,z_{1},z_{2})\mathrm{P}_{s}(dz_{1}dz_{2}d\gamma) \right]
\nabla \phi(\bar{X}^{\epsilon,\gamma}_{s})ds\right]  \rightarrow0\nonumber
\end{align}
as $\epsilon\downarrow0$ and $\rho=\rho(\epsilon)\downarrow 0$. Hence, in order to prove
(\ref{Eq:TargetForConvergenceControlled1}), it is sufficient to prove that
almost surely with respect to $\gamma\in\Gamma$
\[
\mathbb{E} \left\{  \zeta\left(  \bar{X}^{\epsilon,\gamma}_{t_{i}%
},(\mathrm{P}^{\epsilon,\Delta,\gamma},z_{j})_{t_{i}}, i\leq m, j\leq
q\right)  \left[  \phi(\bar{X}^{\epsilon,\gamma}_{t+r})-\phi(\bar{X}^{\epsilon
,\gamma}_{t})-\int_{t}^{t+r}\mathcal{L}^{\epsilon,\Delta,\rho}_{s}\phi(\bar
{X}^{\epsilon,\gamma}_{s})ds\right]  \right\}  \rightarrow0
\]

Recall the auxiliary problem (\ref{Eq:RandomCellProblem}) and consider a
function $\phi\in\mathcal{C}^{2}(\mathbb{R}^{m})$ with compact support. Let us
write $\chi_{\rho}=\left(  \chi_{1,\rho},\ldots,\chi_{m,\rho}\right)  $ for
the components of the vector solution to (\ref{Eq:RandomCellProblem}), and
consider $\psi_{\ell,\rho}(x,y,\gamma)=\chi_{\ell,\rho}(y,\gamma
)\partial_{x_{\ell}}\phi(x)$ for $\ell\in\{1,\ldots,m\}$. Set $\psi_{\rho
}(x,y,\gamma)=\left(  \psi_{1,\rho},\ldots,\psi_{m,\rho}\right)  $. It is easy
to see that $\tilde{\psi}_{\rho}(x,\gamma)$ satisfies the resolvent equation%

\begin{equation}
\rho\tilde{\psi}_{\ell,\rho}(x,\cdot)-\tilde{L}\tilde{\psi}_{\ell,\rho
}(x,\cdot)=\tilde{h}_{\ell}(x,\cdot)
\label{Eq:RandomCellProblem1}%
\end{equation}
where we have defined $\tilde{h}_{\ell}(x,\cdot)=\tilde{b}_{\ell}%
(\cdot)\partial_{x_{\ell}}\phi(x)$. By It\^{o} formula and making use of (\ref{Eq:RandomCellProblem1}), we obtain%

\begin{align}
&  \mathbb{E}\left\{  \zeta\left(  \bar{X}^{\epsilon,\gamma}_{t_{i}%
},(\mathrm{P}^{\epsilon,\Delta,\gamma},z_{j})_{t_{i}}, i\leq m, j\leq
q\right)  \left[  \phi(\bar{X}^{\epsilon,\gamma}_{t+r})-\phi(\bar{X}^{\epsilon
,\gamma}_{t})-\int_{t}^{t+r}\mathcal{L}^{\epsilon,\Delta,\rho}_{s}\phi(\bar
{X}^{\epsilon,\gamma}_{s})ds\right]  \right\} \nonumber\\
&  = \mathbb{E}\left\{  \zeta\left(  \cdots\right)  \left[  \int
_{t}^{t+r}\lambda_{\rho}\left(  \bar{X}^{\epsilon,\gamma}_{s},\bar
{Y}^{\epsilon,\gamma}_{s},\gamma,u^{\epsilon}_{1}(s),u^{\epsilon}_{2}(s)\right)  \nabla
\phi(\bar{X}^{\epsilon,\gamma}_{s})ds- \int_{t}^{t+r}\mathcal{L}^{\epsilon
,\Delta,\rho}_{s}\phi(\bar{X}^{\epsilon,\gamma}_{s})ds\right]  \right\}
\nonumber\\
&  \hspace{0.2cm}+\delta\mathbb{E}\left\{  \zeta\left(
\cdots\right)  \int_{t}^{t+r}\sum_{\ell=1}^{m}\left(  (c+\sigma
u^{\epsilon}_{1}(s)) \partial_{x} \psi_{\ell,\rho} +\epsilon\frac{1}{2}
\text{tr}\left[  \partial^{2}_{x}\psi_{\ell,\rho}\right]  \right)  \left(
\bar{X}^{\epsilon,\gamma}_{s},\bar{Y}^{\epsilon,\gamma}_{s}\right)  \right\}  ds\nonumber\\
&  \hspace{0.2cm}+\epsilon\mathbb{E}\left\{  \zeta\left(
\cdots\right)  \int_{t}^{t+r}\sum_{\ell=1}^{m}\text{tr}\left[\sigma\tau_{1}^{T}  D\partial
_{x}\psi_{\ell,\rho}\right]  \left(  \bar{X}^{\epsilon,\gamma}_{s},\bar
{Y}^{\epsilon,\gamma}_{s}\right)  \right\}  ds\nonumber\\
&  \hspace{0.2cm}+\epsilon2\mathbb{E}\left\{  \zeta\left(
\cdots\right)  \int_{t}^{t+r} \text{tr}\left[  \sigma\sigma^{T}\left(
\bar{Y}^{\epsilon,\gamma}_{s}\right)  \nabla^{2}\phi\left(
\bar{X}^{\epsilon,\gamma}_{s}\right)  \right]  \right\}  ds\nonumber\\
&  \hspace{0.2cm} +\frac{\epsilon}{\delta}\rho\mathbb{E}\left\{
\zeta\left(  \cdots\right)  \int_{t}^{t+r}\chi_{\rho}\left(  \bar
{Y}^{\epsilon,\gamma}_{s}\right)  \nabla \phi(\bar{X}^{\epsilon,\gamma
}_{s})ds\right\} \nonumber\\
&  \hspace{0.2cm}-\delta\sum_{\ell=1}^{m}\mathbb{E}\left\{
\zeta\left(  \cdots\right)  \left(  \psi_{\ell,\rho}\left(  \bar{X}%
^{\epsilon,\gamma}_{t+r},\bar
{Y}^{\epsilon,\gamma}_{t+r}\right)  -\psi_{\ell,\rho}\left(  \bar{X}^{\epsilon,\gamma}_{t},\bar
{Y}^{\epsilon,\gamma}_{t}\right)  \right)  \right\} \nonumber\\
&  = \sum_{i=1}^{6}\mathbb{E}B_{i}^{\epsilon,\gamma}
\label{Eq:TargetForConvergenceControlled2}%
\end{align}
where $\mathbb{E}B_{i}^{\epsilon,\gamma}$ is the $i^{\text{th}}$ line
on the right hand side of (\ref{Eq:TargetForConvergenceControlled2}). We want
to show that each of those terms goes to zero almost surely with respect to
$\gamma\in\Gamma$.

Condition \ref{A:Assumption1} and the bound (\ref{Eq:Ubound}) give us that
\[
\mathbb{E}\left| B_{2}^{\epsilon,\gamma}\right| +\mathbb{E}\left| B_{3}^{\epsilon,\gamma}\right|  \rightarrow0, \quad\text{ as }%
\epsilon\downarrow0
\]

Due to the boundedness and compact support of functions $\zeta$ and $\phi$, we
also get that almost surely in $\gamma\in\Gamma$
\[
\mathbb{E}\left|  B_{4}^{\epsilon,\gamma}\right|  \rightarrow0,
\quad\text{ as }\epsilon\downarrow0
\]

By choosing $\rho=\rho(\epsilon)=\frac{\delta^{2}%
}{\epsilon}$, we also have that almost surely in $\gamma\in\Gamma$
\[
\mathbb{E}\left|  B_{5}^{\epsilon,\gamma}\right| +\mathbb{E}\left|  B_{6}^{\epsilon,\gamma}\right| \rightarrow0,
\quad\text{ as }\epsilon\downarrow0
\]

Let us next consider $B_{1}^{\epsilon,\gamma}$. We have
\begin{align}
  \mathbb{E}B_{1}^{\epsilon,\gamma}
&  =\mathbb{E}\left\{  \zeta\left( \cdots\right)  \left[  \int_{t}^{t+r}\lambda_{\rho}\left(  \bar{X}^{\epsilon
,\gamma}_{s},\bar{Y}^{\epsilon,\gamma}_{s},\gamma,u^{\epsilon}_{1}(s),u^{\epsilon}_{2}(s)\right) \nabla \phi(\bar{X}^{\epsilon,\gamma}_{s}) ds- \int_{t}^{t+r}\mathcal{L}^{\epsilon,\Delta,\rho}%
_{s}\phi(X^{\epsilon,\gamma}_{s})ds\right]  \right\} \nonumber\\
&  = \mathbb{E}\left\{  \zeta\left(  \cdots\right)  \left[  \int
_{t}^{t+r}\lambda_{\rho}\left(  \bar{X}^{\epsilon,\gamma}_{s},\bar
{Y}^{\epsilon,\gamma}_{s},\gamma,u^{\epsilon}_{1}(s),u^{\epsilon}_{2}(s)\right) \nabla \phi(\bar{X}^{\epsilon,\gamma}_{s}) ds-\right.\right.\nonumber\\
&\hspace{3cm}\left.\left.-\int_{t}^{t+r}\frac{1}{\Delta}\int_{s}^{s+\Delta} \lambda_{\rho}\left(
\bar{X}^{\epsilon,\gamma}_{s},\bar{Y}^{\epsilon,\gamma}_{\theta},\gamma,u^{\epsilon}_{1}(\theta),
u^{\epsilon}_{2}(\theta)\right) \nabla \phi(\bar{X}^{\epsilon,\gamma}_{s}) d\theta ds\right]  \right\}
\nonumber\\
&  = \mathbb{E}\left\{  \zeta\left(  \cdots\right)  \left[  \int
_{t}^{t+r}\frac{1}{\Delta}\int_{s}^{s+\Delta}\lambda_{\rho}\left(  \bar
{X}^{\epsilon,\gamma}_{\theta},\bar{Y}^{\epsilon,\gamma}_{\theta}%
,\gamma,u^{\epsilon}_{1}(\theta),u^{\epsilon}_{2}(\theta)\right) \nabla \phi(\bar{X}^{\epsilon,\gamma}_{\theta}) d\theta ds-\right.  \right.
\nonumber\\
&  \hspace{3cm}\left.  \left.  -\int_{t}^{t+r}\frac{1}{\Delta}\int
_{s}^{s+\Delta} \lambda_{\rho}\left(  \bar{X}^{\epsilon,\gamma}_{s},\bar{Y}^{\epsilon,\gamma}_{\theta},\gamma,u^{\epsilon}_{1}(\theta),u^{\epsilon}_{2}(\theta)
\right) \nabla \phi(\bar{X}^{\epsilon,\gamma}_{s})  d\theta ds\right]  \right\} \nonumber\\
&  + \mathbb{E}\left\{  \zeta\left(  \cdots\right)  \left[  \int
_{t}^{t+r}\lambda_{\rho}\left(  \bar{X}^{\epsilon,\gamma}_{s},\bar
{Y}^{\epsilon,\gamma}_{s},\gamma,u^{\epsilon}_{1}(s),u^{\epsilon}_{2}(s)\right) \nabla \phi(\bar{X}^{\epsilon,\gamma}_{s}) ds-\right.\right.\nonumber\\
&\hspace{3cm}\left.\left.-\int_{t}^{t+r}\frac{1}{\Delta}\int_{s}^{s+\Delta} \lambda_{\rho}\left(
\bar{X}^{\epsilon,\gamma}_{\theta},\bar{Y}^{\epsilon,\gamma}_{\theta}%
,\gamma,u^{\epsilon}_{1}(\theta),u^{\epsilon}_{2}(\theta)\right) \nabla \phi(\bar{X}^{\epsilon,\gamma}_{\theta}) d\theta ds\right]  \right\}
\nonumber\\
&  = \mathbb{E}B_{1,1}^{\epsilon,\gamma}+\mathbb{E}%
B_{1,2}^{\epsilon,\gamma}\nonumber
\end{align}
Let us first treat $\mathbb{E}B_{1,1}^{\epsilon,\gamma}$.%

\begin{align}
&  \mathbb{E}B_{1,1}^{\epsilon,\gamma}=\nonumber\\
&  = \mathbb{E}\left\{  \zeta\left(  \cdots\right)  \left[  \int
_{t}^{t+r}\frac{1}{\Delta}\int_{s}^{s+\Delta}\lambda_{\rho}\left(  \bar
{X}^{\epsilon,\gamma}_{\theta},\bar{Y}^{\epsilon,\gamma}_{\theta}%
,\gamma,u^{\epsilon}_{1}(\theta),u^{\epsilon}_{2}(\theta)\right) \nabla \phi(\bar{X}^{\epsilon,\gamma}_{\theta})  d\theta ds-\right.  \right.
\nonumber\\
&  \hspace{6cm}\left.  \left.  - \int_{t}^{t+r}\frac{1}{\Delta}\int
_{s}^{s+\Delta} \lambda_{\rho}\left(  \bar{X}^{\epsilon,\gamma}_{s},\bar{Y}^{\epsilon,\gamma}_{\theta},\gamma,u^{\epsilon}_{1}(\theta),u^{\epsilon}_{2}(\theta)
\right) \nabla \phi(\bar{X}^{\epsilon,\gamma}_{s}) d\theta ds\right]  \right\} \nonumber\\
&  = \mathbb{E}\left\{  \zeta\left(  \cdots\right)  \left[  \int
_{t}^{t+r}\frac{1}{\Delta}\int_{s}^{s+\Delta}\tilde{\lambda}_{\rho}\left(
\bar{X}^{\epsilon,\gamma}_{\theta},\tau_{\bar{Y}^{\epsilon,\gamma
}_{\theta}}\gamma,u^{\epsilon}_{1}(\theta),u^{\epsilon}_{2}(\theta)\right) \nabla \phi(\bar{X}^{\epsilon,\gamma}_{\theta}) d\theta ds-\right.
\right. \nonumber\\
&  \hspace{6cm}\left.  \left.  -\int_{t}^{t+r}\frac{1}{\Delta}\int
_{s}^{s+\Delta} \tilde{\lambda}_{\rho}\left(  \bar{X}^{\epsilon,\gamma}%
_{s},\tau_{\bar{Y}^{\epsilon,\gamma}_{\theta}}\gamma
,u^{\epsilon}_{1}(\theta),u^{\epsilon}_{2}(\theta)\right)\nabla \phi(\bar{X}^{\epsilon,\gamma}_{s})  d\theta ds\right]  \right\} \nonumber\\
&  \rightarrow0, \quad\text{ as }\epsilon\downarrow0,\nonumber
\end{align}
by continuity of $\tilde{\lambda}_{\rho}$ on the first argument, stationarity
and the uniform integrability obtained in Lemma
\ref{L:TightnessOccupationalMeasures}.

Next we treat $\mathbb{E}B_{1,2}^{\epsilon,\gamma}$. We have%

\begin{align}
\mathbb{E} \left|  B_{1,2}^{\epsilon,\gamma}\right|   &  \leq
C_{0}\left\{  \mathbb{E} \int_{0}^{\Delta}\left|  \lambda_{\rho
}\left(  \bar{X}^{\epsilon,\gamma}_{s},\bar{Y}^{\epsilon,\gamma}_{s}%
,\gamma,u^{\epsilon}_{1}(s),u^{\epsilon}_{2}(s)\right) \nabla \phi(\bar{X}^{\epsilon,\gamma}_{s}) \right|  ds\right.\nonumber\\
&\hspace{3cm}\left.+ \mathbb{E}
\int_{t}^{t+\Delta}\left|  \lambda_{\rho}\left(  \bar{X}^{\epsilon,\gamma}%
_{s},\bar{Y}^{\epsilon,\gamma}_{s},\gamma,u^{\epsilon}_{1}(s),u^{\epsilon}_{2}(s)\right) \nabla \phi(\bar{X}^{\epsilon,\gamma}_{s}) \right|  ds\right\} \nonumber
\end{align}

where $C_{0}$ is a finite constant.  Choose $\Delta\downarrow0$ such that $\Delta
/\frac{\delta^{2}}{\varepsilon}\uparrow\infty$. Then, we have
{\small\begin{align}
&  \mathbb{E} \int_{0}^{\Delta}\left|  \lambda_{\rho}\left(  \bar
{X}^{\epsilon,\gamma}_{s},\bar{Y}^{\epsilon,\gamma}_{s}
,\gamma,u^{\epsilon}_{1}(s),u^{\epsilon}_{2}(s)\right) \nabla \phi(\bar{X}^{\epsilon,\gamma}_{s}) \right|  ds \nonumber\\
&\leq\mathbb{E} \int
_{0}^{\Delta}\left| \left( c\left(  \bar
{X}^{\epsilon,\gamma}_{s},\bar{Y}^{\epsilon,\gamma}_{s}
,\gamma\right)+D \chi_{\rho}\left( \bar{Y}%
^{\epsilon,\gamma}_{s},\gamma\right)g\left(  \bar
{X}^{\epsilon,\gamma}_{s},\bar{Y}^{\epsilon,\gamma}_{s}
,\gamma\right)    \right)\nabla \phi(\bar{X}^{\epsilon,\gamma}_{s})  \right|  ds\nonumber\\
&  + \mathbb{E} \int_{0}^{\Delta}\left|\left(
\sigma\left( \bar{X}^{\epsilon,\gamma}_{s},  \bar{Y}^{\epsilon,\gamma}_{s},\gamma\right)  u^{\epsilon}_{1}(s)+D
\chi_{\rho}\left(  \bar{Y}^{\epsilon,\gamma}_{s},\gamma\right)
\left[\tau_{1}\left(  \bar{Y}^{\epsilon,\gamma}_{s},\gamma\right)  u^{\epsilon}_{1}(s)+
\tau_{2}\left(  \bar{Y}^{\epsilon,\gamma}_{s},\gamma\right)  u^{\epsilon}_{2}(s)\right]\right)\nabla \phi(\bar{X}^{\epsilon,\gamma}_{s})\right|  ds\nonumber\\
& \leq\Delta\frac{\frac{\delta^{2}}{\epsilon}}{\Delta}
\mathbb{E} \int_{0}^{\Delta/\frac{\delta^{2}}{\epsilon}}\left|\left(
 c\left(  \bar{X}^{\epsilon,\gamma}_{(\delta^{2}/\epsilon)s},\bar{Y}^{\epsilon,\gamma}_{(\delta
^{2}/\epsilon)s},\gamma\right)+D \chi_{\rho}\left( \bar{Y}^{\epsilon,\gamma}_{(\delta^{2}/\epsilon)s
},\gamma\right)g\left(  \bar{X}^{\epsilon,\gamma}_{(\delta^{2}/\epsilon)s},\bar{Y}^{\epsilon,\gamma}_{(\delta
^{2}/\epsilon)s},\gamma\right)  \right)\nabla \phi(\bar{X}^{\epsilon,\gamma}_{(\delta^{2}/\epsilon)s})\right|  ds\nonumber\\
&  + \sqrt{\Delta}\sqrt{\frac{\frac{\delta^{2}}{\epsilon}}{\Delta
}\mathbb{E} \int_{0}^{\Delta/\frac{\delta^{2}}{\epsilon}}\left\Vert
\left(\sigma\left( \bar{X}^{\epsilon,\gamma}_{(\delta^{2}/\epsilon)s},  \bar{Y}^{\epsilon,\gamma}_{(\delta^{2}/\epsilon)s},\gamma\right)  +D
\chi_{\rho}\left(  \bar{Y}^{\epsilon,\gamma}_{(\delta^{2}/\epsilon)s},\gamma\right)
\tau_{1}\left(  \bar{Y}^{\epsilon,\gamma}_{(\delta^{2}/\epsilon)s},\gamma\right) \right)\nabla \phi \right\Vert  ^{2}ds \mathbb{E} \int_{0}%
^{\Delta}\left\Vert
u^{\epsilon}_{1}(s)\right\Vert  ^{2} ds }\nonumber\\
&  + \sqrt{\Delta}\sqrt{\frac{\frac{\delta^{2}}{\epsilon}}{\Delta
}\mathbb{E} \int_{0}^{\Delta/\frac{\delta^{2}}{\epsilon}}\left\Vert\left(
D
\chi_{\rho}\left(  \bar{Y}^{\epsilon,\gamma}_{(\delta^{2}/\epsilon)s},\gamma\right)\tau_{2}\left(  \bar{Y}^{\epsilon,\gamma}_{(\delta^{2}/\epsilon)s},\gamma\right)\right)\nabla \phi
 \right\Vert  ^{2}ds \mathbb{E} \int_{0}%
^{\Delta}\left\Vert
u^{\epsilon}_{2}(s)\right\Vert  ^{2} ds }\nonumber\\
&  \leq\Delta\frac{\frac{\delta^{2}}{\epsilon}}{\Delta}
\mathbb{E} \int_{0}^{\Delta/\frac{\delta^{2}}{\epsilon}}\left\Vert\left(
 \tilde{c}\left(  \bar{X}^{\epsilon,\gamma}_{(\delta^{2}/\epsilon)s},\tau_{\bar{Y}^{\epsilon,\gamma}_{(\delta^{2}/\epsilon)s}}\gamma\right)+D \tilde{\chi}_{\rho}\left( \tau_{\bar{Y}^{\epsilon,\gamma}_{(\delta^{2}/\epsilon)s}}\gamma\right)\tilde{g}\left(  \bar{X}^{\epsilon,\gamma}_{(\delta^{2}/\epsilon)s},\tau_{\bar{Y}^{\epsilon,\gamma}_{(\delta^{2}/\epsilon)s}}\gamma\right)  \right)\nabla \phi \right\Vert  ds\nonumber\\
&  + \sqrt{\Delta}\sqrt{\frac{\frac{\delta^{2}}{\epsilon}}{\Delta
}\mathbb{E} \int_{0}^{\Delta/\frac{\delta^{2}}{\epsilon}}\left\Vert\left(
\tilde{\sigma}\left( \bar{X}^{\epsilon,\gamma}_{(\delta^{2}/\epsilon)s},  \tau_{\bar{Y}^{\epsilon,\gamma}_{(\delta^{2}/\epsilon)s}}\gamma\right)  +D
\tilde{\chi}_{\rho}\left(  \tau_{\bar{Y}^{\epsilon,\gamma}_{(\delta^{2}/\epsilon)s}}\gamma\right)
\tilde{\tau}_{1}\left(  \tau_{\bar{Y}^{\epsilon,\gamma}_{(\delta^{2}/\epsilon)s}}\gamma\right)\right)\nabla \phi  \right\Vert  ^{2}ds \mathbb{E} \int_{0}%
^{\Delta}\left\Vert
u^{\epsilon}_{1}(s)\right\Vert^{2} ds }\nonumber\\
&  + \sqrt{\Delta}\sqrt{\frac{\frac{\delta^{2}}{\epsilon}}{\Delta
}\mathbb{E} \int_{0}^{\Delta/\frac{\delta^{2}}{\epsilon}}\left\Vert\left(
D
\tilde{\chi}_{\rho}\left(  \tau_{\bar{Y}^{\epsilon,\gamma}_{(\delta^{2}/\epsilon)s}}\gamma\right)
\tilde{\tau}_{2}\left(\tau_{\bar{Y}^{\epsilon,\gamma}_{(\delta^{2}/\epsilon)s}}\gamma\right)
\right)\nabla \phi  \right\Vert  ^{2}ds \mathbb{E} \int_{0}%
^{\Delta}\left\Vert
u^{\epsilon}_{2}(s)\right\Vert  ^{2} ds }\nonumber\\
&  \rightarrow0, \quad\text{as } \epsilon,\Delta\downarrow 0, \Delta
/\frac{\delta^{2}}{\varepsilon} \uparrow\infty,\nonumber
\end{align}}

by Lemma \ref{L:ErgodicTheorem4}, Condition \ref{A:Assumption1}
and the uniform bound (\ref{Eq:Ubound}). Hence, we obtain that
almost surely with respect to $\gamma\in\Gamma$,
\[
\mathbb{E}\left|  B_{1,2}^{\epsilon,\gamma}\right|  \rightarrow0.
\]

This concludes the proof of (\ref{Eq:ViablePair1}). Next, we treat
(\ref{Eq:ViablePair2}). Consider $\tilde{\phi}\in L^{2}(\Gamma)$ stationary, ergodic
random field on $\mathbb{R}^{d-m}$. Let $\phi(y,\gamma)=\tilde{\phi}(\tau_{y}\gamma)$
and assume that $\phi(\cdot,\gamma)\in C^{2}_{b}(\mathbb{R}^{d-m})$. Define the
formal operators
\[
\mathcal{G}^{0,\gamma}_{x,y,\gamma,z_{1},z_{2}}\phi(y,\gamma)=\left[  g(x,y,\gamma
)+\tau_{1}(y,\gamma) z_{1}+\tau_{2}(y,\gamma) z_{2}\right]  D\phi(y,\gamma)
\]
and
\[
\mathcal{G}^{1,\epsilon,\gamma}_{x,y,\gamma, z_{1},z_{2}}\phi(y,\gamma)=\frac{\epsilon
}{\delta^{2}}\mathcal{L}^{\gamma}\phi(y,\gamma)+\frac{1}{\delta}\mathcal{G}%
^{0,\gamma}_{x,y,z_{1},z_{2}}\phi(y,\gamma)
\]

Following the customary notation we write $\tilde{\mathcal{G}}^{0,\gamma
}_{x,\gamma,z_{1},z_{2}}\tilde{\phi}(\gamma)=\left[  \tilde{g}(x,\gamma)+
\tilde{\tau}_{1}(\gamma) z_{1}+\tilde{\tau}_{2}(\gamma) z_{2}\right]  D\tilde{\phi}(\gamma)$ and analogously for
$\tilde{\mathcal{G}}^{1,\epsilon,\gamma}_{x,\gamma, z_{1},z_{2}}\tilde{\phi}(\gamma)$.

For each fixed $\gamma\in\Gamma$, the process
\begin{align}
M^{\epsilon,\gamma}_{t}  &  =\phi(\bar{Y}^{\epsilon,\gamma}_{t})- \phi(\bar
{Y}^{\epsilon,\gamma}_{0})-\int_{0}^{t}\mathcal{G}^{1,\epsilon,\gamma}%
_{\bar{X}^{\epsilon,\gamma}_{s},\bar{Y}^{\epsilon,\gamma}_{s},u^{\epsilon}%
_{1}(s),u^{\epsilon}_{2}(s)}\phi(\bar{Y}^{\epsilon,\gamma}_{s})ds\nonumber\\
&  =\frac{\sqrt{\epsilon}}{\delta}\int_{0}^{t}\left<  D\phi(\bar
{Y}^{\epsilon,\gamma}_{s}),\tau_{1}(\bar{Y}^{\epsilon,\gamma}_{s})dW_{s}\right>+\frac{\sqrt{\epsilon}}{\delta}\int_{0}^{t}\left<  D\phi(\bar
{Y}^{\epsilon,\gamma}_{s}),\tau_{2}(\bar{Y}^{\epsilon,\gamma}_{s})dB_{s}\right>\nonumber
\end{align}
is an $\mathfrak{F}_{t}-$martingale. Set $h(\epsilon)=\frac{\delta^{2}%
}{\epsilon}$ and write
\begin{align}
h(\epsilon)M^{\epsilon,\gamma}_{t}  &  -h(\epsilon)\left[  \phi(\bar{Y}%
^{\epsilon,\gamma}_{t})- \phi(\bar{Y}^{\epsilon,\gamma}_{0})\right] \nonumber\\
&  +h(\epsilon)\left[  \int_{0}^{t}\frac{1}{\Delta}\left(  \int_{s}^{s+\Delta
}\mathcal{G}^{1,\epsilon,\gamma}_{\bar{X}^{\epsilon,\gamma}_{\theta},\bar
{Y}^{\epsilon,\gamma}_{\theta},u^{\epsilon}_{1}(\theta),u^{\epsilon}_{2}(\theta)}\phi(\bar{Y}^{\epsilon
,\gamma}_{\theta})d\theta\right)  ds -\int_{0}^{t}\mathcal{G}^{1,\epsilon
,\gamma}_{\bar{X}^{\epsilon,\gamma}_{s},\bar{Y}^{\epsilon,\gamma}%
_{s},u^{\epsilon}_{1}(s),u^{\epsilon}_{1}(s)}\phi(\bar{Y}^{\epsilon,\gamma}_{s})ds\right] \nonumber\\
&  =-\frac{\delta}{\epsilon}\int_{0}^{t}\frac{1}{\Delta}\left[  \int
_{s}^{s+\Delta}\left(  \mathcal{G}^{0,\gamma}_{\bar{X}^{\epsilon,\gamma
}_{\theta},\bar{Y}^{\epsilon,\gamma}_{\theta},u^{\epsilon}_{1}(\theta),u^{\epsilon}_{2}(\theta)}\phi(\bar
{Y}^{\epsilon,\gamma}_{\theta}) -\mathcal{G}^{0,\gamma}_{\bar{X}%
^{\epsilon,\gamma}_{s},\bar{Y}^{\epsilon,\gamma}_{\theta},u^{\epsilon}_{1}(\theta),
u^{\epsilon}_{2}(\theta)}\phi(\bar{Y}^{\epsilon,\gamma}_{\theta})\right)  d\theta\right]
ds\nonumber\\
&  \hspace{0.5cm}-\frac{\delta}{\epsilon}\int_{\mathcal{Z}\times\mathcal{Z}\times\Gamma
\times[0,t]}\tilde{\mathcal{G}}^{0,\gamma}_{\bar{X}^{\epsilon,\gamma}%
_{s},\gamma,z_{1},z_{2}}\tilde{\phi}(\gamma)\bar{\mathrm{P}}^{\epsilon,\Delta,\gamma
}(dz_{1}dz_{2}d\gamma ds)\nonumber\\
&  \hspace{0.5cm}-\int_{0}^{t}\frac{1}{\Delta}\int_{s}^{s+\Delta}%
\mathcal{L}^{\gamma}\phi(\bar{Y}^{\epsilon,\gamma}_{\theta},\gamma)d\theta ds
\label{Eq:Part2ofViability}%
\end{align}

The boundedness of $\phi$ and of its derivatives imply that almost surely in
$\gamma\in\Gamma$
\[
\mathbb{E}\left[  \left|  h(\epsilon)M^{\epsilon,\gamma}_{t}\right|
^{2}+\left|  h(\epsilon)\left[  \phi(\bar{Y}^{\epsilon,\gamma}_{t})- \phi(\bar
{Y}^{\epsilon,\gamma}_{0})\right]  \right|  \right]  \rightarrow
0,\quad\text{as }\epsilon\downarrow0
\]

Moreover, we have almost surely in $\gamma\in\Gamma$
\begin{align}
&  h(\epsilon) \mathbb{E}\left|  \int_{0}^{t}\frac{1}{\Delta}\left(
\int_{s}^{s+\Delta}\mathcal{G}^{1,\epsilon,\gamma}_{\bar{X}^{\epsilon,\gamma
}_{\theta},\bar{Y}^{\epsilon,\gamma}_{\theta},u^{\epsilon}_{1}(\theta),u^{\epsilon}_{2}(\theta)}\phi(\bar
{Y}^{\epsilon,\gamma}_{\theta})d\theta\right)  ds -\int_{0}^{t}\mathcal{G}%
^{1,\epsilon,\gamma}_{\bar{X}^{\epsilon,\gamma}_{s},\bar{Y}^{\epsilon,\gamma
}_{s},u^{\epsilon}_{1}(s),u^{\epsilon}_{2}(s)}\phi(\bar{Y}^{\epsilon,\gamma}_{s})ds\right| \nonumber\\
&  \leq h(\epsilon) \mathbb{E}\int_{0}^{\Delta}\left|  \mathcal{G}%
^{1,\epsilon,\gamma}_{\bar{X}^{\epsilon,\gamma}_{s},\bar{Y}^{\epsilon,\gamma
}_{s},u^{\epsilon}_{1}(s),u^{\epsilon}_{2}(s)}\phi(\bar{Y}^{\epsilon,\gamma}_{s})\right|  ds
+h(\epsilon) \mathbb{E}\int_{t}^{t+\Delta}\left|  \mathcal{G}%
^{1,\epsilon,\gamma}_{\bar{X}^{\epsilon,\gamma}_{s},\bar{Y}^{\epsilon,\gamma
}_{s},u^{\epsilon}_{1}(s),u^{\epsilon}_{2}(s)}\phi(\bar{Y}^{\epsilon,\gamma}_{s})\right|  ds\nonumber\\
&  \leq\mathbb{E}\int_{0}^{\Delta}\left|  \mathcal{L}\phi(\bar
{Y}^{\epsilon,\gamma}_{s})\right|  ds+\frac{\delta}{\epsilon}\mathbb{E}%
^{\gamma}\int_{0}^{\Delta}\left|  \mathcal{G}^{0,\gamma}_{\bar{X}%
^{\epsilon,\gamma}_{s},\bar{Y}^{\epsilon,\gamma}_{s},u^{\epsilon}_{1}(s),u^{\epsilon}_{2}(s)}%
\phi(\bar{Y}^{\epsilon,\gamma}_{s})\right|  ds\nonumber\\
&  +\mathbb{E}\int_{t}^{t+\Delta}\left|  \mathcal{L}\phi(\bar
{Y}^{\epsilon,\gamma}_{s})\right|  ds+\frac{\delta}{\epsilon}\mathbb{E}%
^{\gamma}\int_{t}^{t+\Delta}\left|  \mathcal{G}^{0,\gamma}_{\bar{X}%
^{\epsilon,\gamma}_{s},\bar{Y}^{\epsilon,\gamma}_{s},u^{\epsilon}_{1}(s),u^{\epsilon}_{2}(s)}%
\phi(\bar{Y}^{\epsilon,\gamma}_{s})\right|  ds\nonumber\\
&  \leq\Delta C_{0}\left[  1+\frac{\delta}{\epsilon}\mathbb{E}%
\int_{0}^{T}\left\Vert  u^{\epsilon}(s)\right\Vert  ^{2}ds\right] \nonumber\\
&  \rightarrow0, \quad\text{as } \epsilon\downarrow0,\nonumber
\end{align}
due to (\ref{Eq:Ubound}) and $\Delta=\Delta(\epsilon
)\downarrow0$. The constant $C_{0}$ depends on the upper bound of the coefficients and
on $\beta,T$.

The first term on the right hand side of (\ref{Eq:Part2ofViability}) goes to
zero in probability, almost surely with respect to $\gamma\in\Gamma$, due to
continuous dependence of $\mathcal{G}^{0,\gamma}_{x,y,z_{1},z_{2}}\phi(y,\gamma)$ on
$x\in\mathbb{R}^{m}$, tightness of $\bar{X}^{\epsilon,\gamma}$, stationarity
and $\delta/\epsilon\downarrow0$.

The second term on the right hand side of (\ref{Eq:Part2ofViability}) also
goes to zero in probability, almost surely with respect to $\gamma\in\Gamma$,
due to continuous dependence of $\mathcal{G}^{0,\gamma}_{x,y,z_{1},z_{2}}\phi(y,\gamma)$ on
$x\in\mathbb{R}^{m}$, tightness of $(\bar{X}^{\epsilon,\gamma},\mathrm{P}%
^{\epsilon,\Delta,\gamma})$, uniform integrability of $\mathrm{P}%
^{\epsilon,\Delta,\gamma}$ (Lemma \ref{L:TightnessOccupationalMeasures}) and
the fact that $\delta/\epsilon\downarrow0$.

Lastly, we consider the third term on the right hand side of
(\ref{Eq:Part2ofViability}). We have
\begin{align}
\int_{0}^{t}\frac{1}{\Delta}\int_{s}^{s+\Delta}\mathcal{L}^{\gamma}\phi(\bar
{Y}^{\epsilon,\gamma}_{\theta},\gamma)d\theta ds  &  = \int_{0}^{t}\frac
{1}{\Delta}\int_{s}^{s+\Delta}\tilde{\mathcal{L}}\tilde{\phi}(\tau_{\bar
{Y}^{\epsilon,\gamma}_{\theta}}\gamma)d\theta ds\nonumber\\
&  =\int_{0}^{t}\int_{\mathcal{Z}\times\Gamma}\tilde{\mathcal{L}}\tilde
{\phi}(\gamma)\mathrm{P}^{\epsilon,\Delta,\gamma}(dzd\gamma ds)\nonumber
\end{align}

Due to weak convergence of $(\bar{X}^{\epsilon,\gamma},\mathrm{P}%
^{\epsilon,\Delta,\gamma})$, the last term converges, almost surely with
respect to $\phi\in\Gamma$ to $\int_{0}^{t}\int_{\mathcal{Z}\times\mathcal{Z}\times\Gamma
}\tilde{\mathcal{L}}\tilde{\phi}(\gamma)\bar{\mathrm{P}}(dz_{1}dz_{2}d\gamma ds)$. Hence,
since the rest of the terms converge to $0$, as $\epsilon\downarrow0$, we
obtain in probability, almost surely in $\gamma\in\Gamma$
\[
\int_{0}^{t}\int_{\mathcal{Z}\times\mathcal{Z}\times\Gamma}\tilde{\mathcal{L}}\tilde{\phi}%
(\gamma)\bar{\mathrm{P}}(dz_{1}dz_{2}d\gamma ds) =0
\]
for almost all $t\in[0,T]$, which together with continuity in $t\in[0,T]$
conclude the proof of (\ref{Eq:ViablePair2}).

\section{Compactness of level sets and quenched lower and upper bounds}
\label{S:LDPrelaxedForm}

Compactness of level sets of the rate function is standard and will not be
repeated here (e.g., Subsection 4.2. of \cite{DupuisSpiliopoulos} or
\cite{FW1}).

Let us now prove the quenched lower bound. First we remark that we can
restrict attention to controls that satisfy Conditions \ref{Eq:UniformlySquareIntegrableControlsAdditional} and \ref{Eq:UniformlySquareIntegrableControlsAdditionalb},
which  are required in order for  Lemma \ref{L:ErgodicTheorem4} to be true. For this purpose we have the following lemma, whose proof is deferred to the end of this section.

\begin{lemma}\label{L:RestrictingTheControl}
Let $(\bar{X}_{s}^{\epsilon,\gamma},
\bar{Y}_{s}^{\epsilon,\gamma})$ be the strong solution to (\ref{Eq:Main2}) and assume Conditions \ref{A:Assumption1} and \ref{A:Assumption2}. Then, the infimum of the representation in (\ref{Eq:VariationalRepresentation}) can be taken over all controls that satisfy Conditions
\ref{Eq:UniformlySquareIntegrableControlsAdditional} and \ref{Eq:UniformlySquareIntegrableControlsAdditionalb}.
\end{lemma}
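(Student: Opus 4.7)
The plan is to prove this by a standard near-optimal truncation argument, exploiting the boundedness of $h$ to obtain automatic $L^2$ control on near-optimal $u^\epsilon$, and then showing that truncating such controls to satisfy the stronger conditions (\ref{Eq:UniformlySquareIntegrableControlsAdditional}) and (\ref{Eq:UniformlySquareIntegrableControlsAdditionalb}) changes the cost by at most $o(1)$.

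First, I would derive a uniform $L^2$ bound for near-optimal controls. Since $h$ is bounded, the left-hand side of (\ref{Eq:VariationalRepresentation}) is bounded above by $\|h\|_\infty$ uniformly in $\epsilon$. Hence for any $\eta>0$ and any control $u^\epsilon=(u_1^\epsilon,u_2^\epsilon)$ that is $\eta$-optimal, the representation gives
\begin{equation*}
\mathbb{E}\int_0^T \left[\|u_1^\epsilon(s)\|^2+\|u_2^\epsilon(s)\|^2\right]ds \;\leq\; 2(\|h\|_\infty+\eta),
\end{equation*}
so that one may restrict attention to controls satisfying (\ref{Eq:Ubound}).

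Second, given such a near-optimal $u^\epsilon$, I would construct a modification $\hat u^\epsilon$ satisfying Conditions (\ref{Eq:UniformlySquareIntegrableControlsAdditional})–(\ref{Eq:UniformlySquareIntegrableControlsAdditionalb}). The standard recipe is a two-step truncation: (i) cut $u^\epsilon$ at a level $M=M(\epsilon)\uparrow\infty$ chosen slowly enough to be compatible with the $\delta/\epsilon$ scaling required by Lemma \ref{L:ErgodicTheorem4}, producing pointwise bounded controls; and, if needed, (ii) mollify/piecewise-average on a time grid matched to the fast-scale $\delta^2/\epsilon$ so the temporal regularity needed for (\ref{Eq:UniformlySquareIntegrableControlsAdditional}) is obtained. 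The running cost can only decrease under truncation, and the residual $L^2$ mass $\mathbb{E}\int_0^T \|u^\epsilon\|^2 \mathbf{1}_{\{\|u^\epsilon\|\geq M\}}\,ds$ vanishes as $M\uparrow\infty$ by uniform integrability following from the uniform $L^2$ bound.

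Third, I would bound the change in the terminal cost. Writing the SDE (\ref{Eq:Main2}) for the difference $\bar X^{\epsilon,u^\epsilon}_t-\bar X^{\epsilon,\hat u^\epsilon}_t$ and using Condition \ref{A:Assumption1} (global boundedness of coefficients and of their derivatives, hence Lipschitz continuity of $c,\sigma$), together with analogous estimates for $\bar Y^{\epsilon,u^\epsilon}_t-\bar Y^{\epsilon,\hat u^\epsilon}_t$ via Gronwall's inequality, yields
\begin{equation*}
\mathbb{E}\sup_{t\leq T}\|\bar X^{\epsilon,u^\epsilon}_t-\bar X^{\epsilon,\hat u^\epsilon}_t\|^2 \;\leq\; C\,\mathbb{E}\int_0^T \|u^\epsilon(s)-\hat u^\epsilon(s)\|^2\,ds,
\end{equation*}
which vanishes as $M\uparrow\infty$. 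Since $h$ is bounded and continuous, $\mathbb{E}|h(\bar X^{\epsilon,u^\epsilon})-h(\bar X^{\epsilon,\hat u^\epsilon})|\to 0$ by bounded convergence along a subsequence. Combining everything, the cost under $\hat u^\epsilon$ differs from the cost under $u^\epsilon$ by $o(1)$, so the restricted infimum agrees with the full infimum up to $\eta$; letting $\eta\downarrow 0$ completes the proof.

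The main obstacle is calibrating the truncation parameters to the multiscale structure: $M(\epsilon)$ must diverge fast enough that $\mathbb{E}\int_0^T \|u^\epsilon - \hat u^\epsilon\|^2\,ds$ vanishes, yet slow enough that $\hat u^\epsilon$ still meets the $\delta,\epsilon$-dependent growth condition in (\ref{Eq:UniformlySquareIntegrableControlsAdditionalb}). In the coupling estimate one must also absorb the potentially large drift contribution from $(\epsilon/\delta) b(Y)$; however, since this term does not involve the controls and enters both $\bar X^{\epsilon,u^\epsilon}$ and $\bar X^{\epsilon,\hat u^\epsilon}$ identically, it cancels in the difference, leaving only the Lipschitz drift and diffusion terms to which Gronwall applies cleanly.
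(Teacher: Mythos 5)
Your proposal captures the first half of the paper's argument (Condition \ref{Eq:UniformlySquareIntegrableControlsAdditional}) in spirit, but the strategy you outline for the second condition does not work, and the claimed uniform-integrability step is unjustified. Let me flag the two concrete gaps.

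First, the claim that ``the residual $L^2$ mass $\mathbb{E}\int_0^T\|u^\epsilon\|^2\mathbf{1}_{\{\|u^\epsilon\|\geq M\}}ds$ vanishes as $M\uparrow\infty$ by uniform integrability following from the uniform $L^2$ bound'' is false as stated. The bound $\sup_\epsilon\mathbb{E}\int_0^T\|u^\epsilon\|^2 ds<\infty$ only gives $L^1(ds\otimes d\mathbb{P})$-boundedness of $\{\|u^\epsilon\|^2\}$, which does not imply uniform integrability of that family; without some uniformly integrable majorant or a higher-moment bound, the truncated mass need not vanish uniformly in $\epsilon$ as $M\to\infty$. What the paper actually does for Condition \ref{Eq:UniformlySquareIntegrableControlsAdditional} is the Budhiraja--Dupuis stopping argument (Theorem 4.4 of \cite{BudhirajaDupuis2000}): stop the control when its cumulative $L^2$ norm reaches level $N$, bound the probability of stopping by Chebyshev using the $L^1$ bound, and absorb the cost difference using only the boundedness of $h$. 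That is a genuinely different mechanism than pointwise truncation and it avoids uniform integrability entirely.

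Second, and more seriously, no single truncation can deliver Condition \ref{Eq:UniformlySquareIntegrableControlsAdditionalb}. That condition requires $\frac{1}{\epsilon}\mathbb{E}\int_0^{\delta^2 T/\epsilon}\|u^\epsilon\|^2 ds\leq C\delta/\epsilon$, i.e.\ the short-time $L^2$ mass must vanish at the rate $\delta/\epsilon$. Pointwise truncation at level $M(\epsilon)$ gives the bound $\delta^2 T M(\epsilon)^2/\epsilon^2$, which forces $M(\epsilon)^2\lesssim\epsilon/\delta$; but then the pathwise bound $\int_0^T\|\hat u^\epsilon\|^2ds\leq TM(\epsilon)^2\sim\epsilon/\delta\to\infty$ destroys Condition \ref{Eq:UniformlySquareIntegrableControlsAdditional}, which needs a fixed $N$. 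Conversely, the Budhiraja--Dupuis stopping only controls $\int_0^T\|u^\epsilon\|^2 ds\leq N$ a.s., which gives $\frac{1}{\epsilon}\mathbb{E}\int_0^{\delta^2 T/\epsilon}\|u^\epsilon\|^2 ds\leq N/\epsilon$, far too crude. The two conditions pull against each other under any naive truncation. The paper instead proves \ref{Eq:UniformlySquareIntegrableControlsAdditionalb} by a self-referential bootstrap: it starts from the observation that any candidate control must do at least as well as the zero control, uses Lipschitz continuity of $h$ to convert this into $\mathbb{E}\int_0^T\|u^\epsilon\|^2 ds\leq L_h\,\mathbb{E}\|X_T^\epsilon-\bar X_T^\epsilon\|$, then rescales time by $\delta^2/\epsilon$ and applies a Gronwall argument to the rescaled difference, eventually producing a quadratic inequality for $\nu_T^\epsilon:=\frac{1}{\epsilon}\mathbb{E}\int_0^{\delta^2 T/\epsilon}\|u^\epsilon\|^2 ds$ whose solution gives $|\nu_T^\epsilon|^2\leq C(\delta/\epsilon)^2$. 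This shows that near-optimal controls \emph{automatically} satisfy \ref{Eq:UniformlySquareIntegrableControlsAdditionalb}; no surgery on the control is needed. You will need this bootstrap, or something playing the same role, to close the argument.
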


Based on Lemma \ref{L:RestrictingTheControl}, we can restrict attention to controls satisfying Conditions
\ref{Eq:UniformlySquareIntegrableControlsAdditional} and \ref{Eq:UniformlySquareIntegrableControlsAdditionalb}. Given such controls, we construct the controlled pair $(\bar
{X}^{\epsilon,\gamma},\mathrm{P}^{\epsilon,\Delta,\gamma})$ based on such a
family of controls. Then, Theorem \ref{T:MainTheorem1} implies tightness of
the pair $\left\{  (\bar{X}^{\epsilon,\gamma},\mathrm{P}^{\epsilon
,\Delta,\gamma}), \epsilon,\Delta>0 \right\}  $. Let us denote by $(\bar
{X},\bar{\mathrm{P}})\in\mathcal{V}$ an accumulation point of the controlled
pair in distribution, almost surely with respect to $\gamma\in\Gamma$. Then,
by Fatou's lemma we conclude the proof of the lower bound. Indeed
\begin{align}
\liminf_{\epsilon\downarrow0}-\epsilon\log\mathbb{E}\left[
e^{-\frac{1}{\epsilon}h(X^{\epsilon})}\right]   &  \geq\liminf_{\epsilon
\downarrow0}\mathbb{E}\left[  \frac{1}{2}\int_{0}%
^{T}\left[\left\Vert u^{\epsilon,\gamma}_{1}(s)\right\Vert^{2}+\left\Vert u^{\epsilon,\gamma}_{2}(s)\right\Vert^{2}\right]ds+h(\bar{X}^{\epsilon,\gamma})\right]
\nonumber\\
&  \geq\liminf_{\epsilon\downarrow0}\mathbb{E}\left[
\frac{1}{2}\int_{0}^{T}\int_{\mathcal{Z}\times\mathcal{Z}\times\Gamma}\left[\left\Vert z_{1}\right\Vert^{2}+\left\Vert z_{2}\right\Vert^{2}\right]\mathrm{P}%
^{\epsilon,\Delta,\gamma}(dz_{1}dz_{2} d\gamma ds)+h(\bar{X}^{\epsilon,\gamma})\right]
\nonumber\\
&  \geq\inf_{(\phi,\mathrm{P})\in\mathcal{V}  }\left[
\frac{1}{2}\int_{\mathcal{Z}\times\mathcal{Z}\times\Gamma\times[0,T]}\left[\left\Vert z_{1}\right\Vert^{2}+\left\Vert z_{2}\right\Vert^{2}\right]\mathrm{P}(dz_{1}dz_{2}
d\gamma ds)+h(\phi)\right] \nonumber
\end{align}
which concludes the proof of the Laplace principle lower bound.

It remains to prove the quenched upper bound for the Laplace principle. To do
so, we fix a bounded and continuous function $h:\mathcal{C}\left(
[0,T];\mathbb{R}^{m}\right)  \mapsto\mathbb{R}$, and we show that
\[
\limsup_{\epsilon\downarrow0}-\epsilon\log\mathbb{E}\left[
e^{-\frac{1}{\epsilon}h(X^{\epsilon})}\right]  \leq\inf_{\phi\in
\mathcal{C}\left(  [0,T];\mathbb{R}^{m}\right)  }\left\{  S(\phi
)+h(\phi)\right\}
\]

The idea is to fix a nearly optimizer of the right hand side of the last
display and construct the control which attains the given upper bound. Fix
$\eta>0$ and consider $\psi\in\mathcal{C}\left(  [0,T];\mathbb{R}^{m}\right)
$ with $\psi_{0}=x_{0}$ such that
\[
S(\psi)+h(\psi)\leq\inf_{\phi\in\mathcal{C}\left(  [0,T];\mathbb{R}%
^{m}\right)  }\left\{  S(\phi)+h(\phi)\right\}  +\eta<\infty
\]

Boundedness of $h$ implies that $S(\psi)<\infty$ which means that $\psi$ is
absolutely continuous. Since the local rate function $L^{o}(x,v)$, defined in
(\ref{Eq:LocalRateFunction}), is continuous and bounded as a function of
$(x,v)\in\mathbb{R}^{m}$, standard mollification arguments (Lemmas 6.5.3 and
6.5.5 in \cite{DupuisEllis}) allow to assume that $\dot{\psi}$ is piecewise
constant. Next, we define the elements of $L^{2}(\Gamma)$%

\[
\tilde{u}_{1,\rho}(t,x,\gamma)=\left(\tilde{\sigma}(x,\gamma)+ D\tilde
{\chi}_{\rho}(\gamma)\tilde{\tau}_{1}(\gamma)\right)^{T}q^{-1}(x)(\dot{\psi}_{t}-r(x))
\]
and
\[
\tilde{u}_{2,\rho}(t,x,\gamma)=\left(D\tilde
{\chi}_{\rho}(\gamma)\tilde{\tau}_{2}(\gamma)\right)^{T}q^{-1}(x)(\dot{\psi}_{t}-r(x))
\]

and the associated stationary fields $u_{1,\rho}(t,x,y,\gamma)=\tilde{u}_{1,\rho
}(t,x,\tau_{y}\gamma)$ and $u_{2,\rho}(t,x,y,\gamma)=\tilde{u}_{2,\rho
}(t,x,\tau_{y}\gamma)$. We recall that $\tilde{\chi}_{\rho}$ satisfies the
auxiliary problem in (\ref{Eq:RandomCellProblem}). Let us consider now the
solution $\left(\bar{X}^{\epsilon}_{t},\bar{Y}^{\epsilon}_{t}\right)$ of (\ref{Eq:Main2}) with the control
$u(t)=\left(u_{1}(t),u_{2}(t)\right)$ being
\[
u^{\epsilon,\rho,\gamma}_{t}=\left(u_{1,\rho}\left(  t,\bar{X}^{\epsilon}_{t}%
,\bar{Y}^{\epsilon}_{t},\gamma\right), u_{2,\rho}\left(  t,\bar{X}^{\epsilon}_{t}%
,\bar{Y}^{\epsilon}_{t},\gamma\right)\right).
\]

Then, replacing $c(x,y,\gamma)$ by $c(t,x,y,\gamma)=c(x,y,\gamma)+\sigma(y,\gamma)u_{1,\rho}(t,x,y,\gamma)$, and $g(x,y,\gamma)$ by $g(t,x,y,\gamma)=g(x,y,\gamma)+\tau_{1}(y,\gamma)u_{1,\rho}(t,x,y,\gamma)+\tau_{2}(y,\gamma)u_{2,\rho}(t,x,y,\gamma)$ Theorem \ref{L:ErgodicTheorem4}
implies that
\begin{equation*}
\bar{X}^{\epsilon}\rightarrow\bar{X} \quad\text{in law, almost surely
with respect to }\gamma\in\Gamma,
\end{equation*}
as $\epsilon\downarrow0$ where we have that w.p. 1 the limit is
\begin{align}
\bar{X}_{t}  &  =x_{0}+\int_{0}^{t}\lim_{\rho\downarrow0}\mathrm{E}^{\pi
}\left[ \tilde{c}(\bar{X}_{s},\cdot)+D\tilde{\chi}_{\rho}(\cdot)\tilde{g}(\bar{X}_{s},\cdot) + \left(\tilde{\sigma}(\bar{X}_{s},\cdot)+D\tilde{\chi}_{\rho}(\cdot)\tilde{\tau}_{1}(\cdot)\right) \tilde{u}_{1,\rho}(s,\bar{X}_{s},\cdot)\right.\nonumber\\
&\qquad\qquad\left.+\left(D\tilde{\chi}_{\rho}(\cdot)\tilde{\tau}_{2}(\cdot)\right) \tilde{u}_{2,\rho}(s,\bar{X}_{s},\cdot)\right]
ds\nonumber\\
&  =x_{0}+\int_{0}^{t}\lim_{\rho\downarrow0}\mathrm{E}^{\pi}\left[
\tilde{c}(\bar{X}_{s},\cdot)+D\tilde{\chi}_{\rho}(\cdot)\tilde{g}(\bar{X}_{s},\cdot)\right]
ds\nonumber\\
&\qquad\qquad+\int_{0}^{t}\lim_{\rho\downarrow0}\mathrm{E}^{\pi}\left[  \left(\tilde{\sigma}(\bar{X}_{s},\cdot)+D\tilde{\chi}_{\rho}(\cdot)\tilde{\tau}_{1}(\cdot)\right) \tilde{u}_{1,\rho}(s,\bar{X}_{s},\cdot)+\left(D\tilde{\chi}_{\rho}(\cdot)\tilde{\tau}_{2}(\cdot)\right) \tilde{u}_{2,\rho}(s,\bar{X}_{s},\cdot)\right]  ds\nonumber\\
&  =x_{0}+\int_{0}^{t}r(\bar{X}_{s})ds+\int_{0}^{t}\lim_{\rho\downarrow
0}\mathrm{E}^{\pi}\left\{ \left[ \left(\tilde{\sigma}(\bar{X}_{s},\cdot)
+D\tilde{\chi}_{\rho}(\cdot)\tilde{\tau}_{1}(\cdot)\right) \left(\tilde{\sigma}(\bar{X}_{s},\cdot)+D\tilde{\chi}_{\rho}(\cdot)\tilde{\tau}_{1}(\cdot)\right)^{T} \right.\right.\nonumber\\
 &\hspace{6cm}\left.\left.+\left(D\tilde{\chi}_{\rho}(\cdot)\tilde{\tau}_{2}(\cdot)\right) \left(D\tilde{\chi}_{\rho}(\cdot)\tilde{\tau}_{2}(\cdot)\right)^{T}\right]q^{-1}(\bar{X}_{s})
(\dot{\psi}_{s}-r(\bar{X}_{s}))\right\}  ds\nonumber\\
&  =x_{0}+\int_{0}^{t}r(\bar{X}_{s})ds+\int_{0}^{t}\mathrm{E}^{\pi}\left\{ \left[ \left(\tilde{\sigma}(\bar{X}_{s},\cdot)+D\tilde{\xi}(\cdot)\tilde{\tau}_{1}(\cdot)\right) \left(\tilde{\sigma}(\bar{X}_{s},\cdot)+D\tilde{\xi}(\cdot)\tilde{\tau}_{1}(\cdot)\right)^{T} \right.\right.\nonumber\\ &\hspace{6cm}\left.\left.+\left(D\tilde{\xi}(\cdot)\tilde{\tau}_{2}(\cdot)\right) \left(D\tilde{\xi}(\cdot)\tilde{\tau}_{2}(\cdot)\right)^{T}\right]q^{-1}(\bar{X}_{s})
(\dot{\psi}_{s}-r(\bar{X}_{s}))\right\}ds
\nonumber\\
&  =x_{0}+\int_{0}^{t}r(\bar{X}_{s})ds+\int_{0}^{t}q(\bar{X}_{s}) q^{-1}(\bar{X}_{s})(\dot{\psi}%
_{s}-r(\bar{X}_{s}))ds\nonumber\\
&  =x_{0}+\psi_{t}-\psi_{0}\nonumber\\
&  =\psi_{t}.\nonumber
\end{align}

Moreover, by Theorem \ref{L:ErgodicTheorem4}  we
have that for any $\eta>0$, there exists a $N_{\eta}$ with $\nu\left[
N_{\eta}\right]  >1-\eta$ such that%

\begin{align}
&\lim_{\epsilon\downarrow0}\sup_{\gamma\in N_{\eta}}\sup_{0\leq t\leq
T}\mathbb{E}\left[  \frac{1}{2}\int_{0}^{T}\left[\left\Vert u^{\epsilon
,\gamma}_{1,\rho}(s)\right\Vert^{2}+\left\Vert u^{\epsilon
,\gamma}_{2,\rho}(s)\right\Vert^{2}\right]ds\right.\nonumber\\
&\hspace{4cm}\left.- \frac{1}{2}\int_{0}^{T}\lim_{\rho\downarrow
0}\mathrm{E}^{\pi}\left[\left\Vert u^{\epsilon
,\gamma}_{1,\rho}(s,X^{\epsilon}_{s},\cdot)\right\Vert^{2}+\left\Vert u^{\epsilon
,\gamma}_{2,\rho}(s,X^{\epsilon}_{s},\cdot)\right\Vert^{2}\right]
ds\right|  =0\nonumber
\end{align}

Therefore, noticing that for each fixed $x\in\mathbb{R}^{m}$ and almost every
$t\in[0,T]$
\begin{align}
  \lim_{\rho\downarrow0}\mathrm{E}^{\pi}\left[\left\Vert u^{\epsilon
,\gamma}_{1,\rho}(s,x,\cdot)\right\Vert^{2}+\left\Vert u^{\epsilon
,\gamma}_{2,\rho}(s,x,\cdot)\right\Vert^{2}\right]
ds&=
(\dot{\psi}_{s}-r(x))^{T}q^{-1}(x)q(x) q^{-1}(x)(\dot{\psi}%
_{s}-r(x))\nonumber\\
& =L^{0}(x,\dot{\psi}_{s}),\nonumber
\end{align}
we finally obtain%

\begin{align}
\limsup_{\epsilon\downarrow0}-\epsilon\log\mathbb{E}\left[
e^{-\frac{1}{\epsilon}h(X^{\epsilon})}\right]   &  =\limsup_{\epsilon
\downarrow0}\inf_{u\in\mathcal{A}}\mathbb{E}\left[  \frac
{1}{2}\int_{0}^{T}\left[\left\Vert u_{1}(s)\right\Vert^{2}+\left\Vert u_{2}(s)\right\Vert^{2}\right]ds+h(\bar{X}^{\epsilon})\right] \nonumber\\
&  \leq\limsup_{\epsilon\downarrow0}\mathbb{E}\left[
\frac{1}{2}\int_{0}^{T}\left[\left\Vert u^{\epsilon,\gamma}_{1,\rho}(s)\right\Vert^{2}+\left\Vert u^{\epsilon,\gamma}_{2,\rho}(s)\right\Vert^{2}\right]ds+h(\bar{X}^{\epsilon
})\right] \nonumber\\
&  \leq\left[  S(\psi)+h(\psi)\right] \nonumber\\
&  \leq\inf_{\phi\in\mathcal{C}\left(  [0,T];\mathbb{R}^{m}\right)  }\left\{
S(\phi)+h(\phi)\right\}  +\eta.\nonumber
\end{align}

The first line follows from the representation (\ref{Eq:VariationalRepresentation}) and the second line from the choice of the particular control. The third
line follows from he convergence of the $X^{\epsilon}$ and of the cost
functional using the continuity of $h$. Then, the fourth line follows from the
fact $\bar{X}_{t}=\psi_{t}$. Since the last statement is true for every
$\eta>0$ the proof of the upper bound is done.

We conclude this section with the proof of Lemma \ref{L:RestrictingTheControl}.

\begin{proof}[Proof of Lemma \ref{L:RestrictingTheControl}]
First, we explain why Condition
\ref{Eq:UniformlySquareIntegrableControlsAdditional} can be assumed without loss of generality. Without loss of generality, we can consider a function $h(x)$ that is bounded and uniformly Lipschitz
continuous in $\mathbb{R}^{m}$. Namely, there exists a constant $L_{h}$ such that
\[
|h(x)-h(y)|\leq L_{h}\left\Vert x-y\right\Vert
\]
and $\left\Vert h\right\Vert_{\infty}=\sup_{x\in\mathbb{R}^{m}}|h(x)|<\infty$. We recall that the representation
\begin{equation}
-\epsilon\log\mathbb{E}\left[  e^{-\frac{1}{\epsilon
}h(X^{\epsilon}_{T})}\right]  =\inf_{u\in\mathcal{A}}\mathbb{E}\left[  \frac{1}{2}\int_{0}^{T}\left\Vert u(s)\right\Vert^{2}ds+h(\bar{X}^{\epsilon
}_{T})\right]   \label{Eq:RepresentationTheorem1}%
\end{equation}
is valid in a $\gamma$ by $\gamma$ basis.

Fix $a>0$. Then for every $\epsilon>0$, there exists a control $u^{\epsilon}\in\mathcal{A}$ such that
\begin{equation}
-\epsilon\log\mathbb{E}\left[  e^{-\frac{1}{\epsilon
}h(X^{\epsilon}_{T})}\right]  \geq \mathbb{E}\left[  \frac{1}{2}\int_{0}^{T}\left\Vert u^{\epsilon}(s)\right\Vert^{2}ds+h(\bar{X}^{\epsilon
}_{T})\right]-a  . \label{Eq:RepresentationTheorem1a}%
\end{equation}

So, letting $M_{0}=\left\Vert h\right\Vert_{\infty}=\sup_{x\in\mathbb{R}^{m}}|h(x)|$ we easily see that such a control $u^{\epsilon}$ should satisfy
\[
\sup_{\epsilon>0,\gamma\in\Gamma}\mathbb{E}\left[  \frac{1}{2}\int_{0}^{T}\left\Vert u^{\epsilon}(s)\right\Vert^{2} ds\right]\leq M_{1}=2M_{0}+a.
\]

Given that the latter bound has been established, the claim that in proving the Laplace principle lower bound one can assume Condition
\ref{Eq:UniformlySquareIntegrableControlsAdditional} without loss of
generality, follows by the last display and the
representation (\ref{Eq:RepresentationTheorem1}) as in the proof of Theorem
4.4 of \cite{BudhirajaDupuis2000}. In particular, it follows by the arguments in \cite{BudhirajaDupuis2000} that if
the last display holds, then it is enough to assume that for given $a>0$
the controls satisfy the bound
\[
\int_{0}^{T}\left\Vert u^{\epsilon}(s)\right\Vert^{2}ds<N,
\]
with
\[
N\geq \frac{4M_{0}(4M_{0}+a)}{a}
\]
which proves that in proving the Laplace principle lower bound one can assume Condition
\ref{Eq:UniformlySquareIntegrableControlsAdditional} without loss of
generality.

Second, we explain why Condition
\ref{Eq:UniformlySquareIntegrableControlsAdditionalb} can be assumed without loss of generality. It is clear by the representation (\ref{Eq:RepresentationTheorem1}) that the
trivial bound holds
\begin{equation*}
-\epsilon\log\mathbb{E}\left[  e^{-\frac{1}{\epsilon
}h(X^{\epsilon}_{T})}\right]  \leq\mathbb{E} h(X^{\epsilon
}_{T}),
\end{equation*}
where the control $u^{\epsilon}(\cdot)=0$ is used to evaluate the right hand
side.  Thus, we only need to consider controls that
satisfy
\begin{align}
\mathbb{E}\left[  \frac{1}{2}\int_{0}^{T} \left\Vert u^{\epsilon,\gamma}(s)\right\Vert^{2}ds+h(\bar{X}^{\epsilon}_{T})\right]   &  \leq \mathbb{E} h(X^{\epsilon
}_{T})\nonumber
\end{align}
which by the Lipschitz assumption on $h$, implies that
\begin{align}
\mathbb{E}\left[  \int_{0}^{T}\left\Vert u^{\epsilon,\gamma}(s)\right\Vert^{2}ds\right]   &  \leq \mathbb{E} \left|h(X^{\epsilon}_{T})-h(\bar{X}^{\epsilon}_{T})\right|\nonumber\\
&  \leq L_{h} \mathbb{E} \left\Vert X^{\epsilon}_{T}-\bar{X}^{\epsilon}_{T}\right\Vert.\nonumber
\end{align}

Let us next define the processes $\left(\hat{\bar{X}}^{\epsilon}_{t},\hat{\bar{Y}}^{\epsilon}_{t}\right)=\left(\bar{X}^{\epsilon}_{\frac{\delta^{2}t}{\epsilon}},\bar{Y}^{\epsilon}_{\frac{\delta^{2}t}{\epsilon}}\right)$  and
$\left(\hat{X}^{\epsilon}_{t},\hat{Y}^{\epsilon}_{t}\right)=\left(X^{\epsilon}_{\frac{\delta^{2}t}{\epsilon}},Y^{\epsilon}_{\frac{\delta^{2}t}{\epsilon}}\right)$. It is easy to see that $\left(\hat{\bar{X}}^{\epsilon}_{t},\hat{\bar{Y}}^{\epsilon}_{t}\right)$ satisfies the SDE

\begin{eqnarray}
d\hat{\bar{X}}^{\epsilon}_{t}&=&  \delta b\left(\hat{\bar{Y}}^{\epsilon}_{t},\gamma\right)dt+\frac{\delta^{2}}{\epsilon}\left[c\left(  \hat{\bar{X}}^{\epsilon}_{t}%
,\hat{\bar{Y}}^{\epsilon}_{t},\gamma\right)+\sigma\left(  \hat{\bar{X}}_{t}^{\epsilon},\hat{\bar{Y}}_{t}^{\epsilon},\gamma\right)  u_{1}(\delta^{2}t/\epsilon)\right]   dt+\delta%
\sigma\left(  \hat{\bar{X}}^{\epsilon}_{t},\hat{\bar{Y}}^{\epsilon}_{t},\gamma\right)
dW_{t}, \nonumber\\
d\hat{\bar{Y}}^{\epsilon}_{t}&=& f\left(\hat{\bar{Y}}^{\epsilon}_{t},\gamma\right)dt  +\frac{\delta}{\epsilon}\left[ g\left(  \hat{\bar{X}}^{\epsilon}_{t}
,\hat{\bar{Y}}^{\epsilon}_{t},\gamma\right)+\tau_{1}\left(\hat{\bar{Y}}^{\epsilon}_{t},\gamma\right)u_{1}(\delta^{2}t/\epsilon)+\tau_{2}\left(\hat{\bar{Y}}^{\epsilon}_{t},\gamma\right)u_{2}(\delta^{2}t/\epsilon)\right]   dt\nonumber\\
& &\hspace{5cm}+\left[
\tau_{1}\left(\hat{\bar{Y}}^{\epsilon}_{t},\gamma\right)
dW_{t}+\tau_{2}\left(\hat{\bar{Y}}^{\epsilon}_{t},\gamma\right)dB_{t}\right],\nonumber\\
\hat{\bar{X}}^{\epsilon}_{0}&=&x_{0},\hspace{0.2cm}\hat{\bar{Y}}^{\epsilon}_{0}=y_{0},\nonumber
\end{eqnarray}
and $\left(\hat{X}^{\epsilon}_{t},\hat{Y}^{\epsilon}_{t}\right)$ satisfies the same SDE with the control $u^{\epsilon}_{1}(\cdot)=u^{\epsilon}_{2}(\cdot)=0$.

So, we basically have that
\begin{align}
\frac{1}{\epsilon}\mathbb{E}\left[  \int_{0}^{\frac{\delta^{2}T}{\epsilon}}\left\Vert u^{\epsilon,\gamma}(s)\right\Vert^{2}ds\right]   &  \leq L_{h} \frac{1}{\epsilon} \mathbb{E} \left\Vert X^{\epsilon}_{\frac{\delta^{2}T}{\epsilon}}-\bar{X}^{\epsilon}_{\frac{\delta^{2}T}{\epsilon}}\right\Vert\nonumber\\
&= L_{h} \frac{\delta}{\epsilon} \mathbb{E} \left\Vert \frac{1}{\delta} \hat{X}^{\epsilon}_{T}-\frac{1}{\delta}\hat{\bar{X}}^{\epsilon}_{T}\right\Vert.\nonumber
\end{align}

For notational convenience, we define
\[
\nu^{\epsilon}_{T}\doteq \frac{1}{\epsilon}\mathbb{E}\left[  \int_{0}^{\frac{\delta^{2}T}{\epsilon}}\left\Vert u^{\epsilon,\gamma}(s)\right\Vert^{2}ds\right]
\]
and
\[
m^{\epsilon}_{T}\doteq \mathbb{E} \left\Vert \frac{1}{\delta}\hat{X}^{\epsilon}_{T}-\frac{1}{\delta}\hat{\bar{X}}^{\epsilon}_{T}\right\Vert^{2}+\mathbb{E} \left\Vert \hat{Y}^{\epsilon}_{T}-\hat{\bar{Y}}^{\epsilon}_{T}\right\Vert^{2}.
\]

Since for $x>0$, the function $x^{2}$ is increasing, the latter inequality, followed by Jensen's inequality give us
\begin{align}
\left|\nu^{\epsilon}_{T}\right|^{2}\leq \left|L_{h} \frac{\delta}{\epsilon}\right|^{2}m^{\epsilon}_{T}.\nonumber
\end{align}

The next step is to derive an upper bound of $m^{\epsilon}_{T}$ in terms of $|\nu^{\epsilon}_{T}|^{2}$. Writing down the differences of
 $\hat{X}^{\epsilon}_{T}-\hat{\bar{X}}^{\epsilon}_{T}$ and $\hat{Y}^{\epsilon}_{T}-\hat{\bar{Y}}^{\epsilon}_{T}$, squaring, taking expectation and using Lipschitz continuity of the functions $b,c,f,g,\sigma,\tau_{1},\tau_{2}$ and boundedness of $\sigma,\tau_{1},\tau_{2}$ we obtain the inequality
\begin{align}
m^{\epsilon}_{T}&\leq C_{0}\int_{0}^{T} m^{\epsilon}_{s}ds+
 C_{1}\left\{\left|\frac{\delta}{\epsilon}\mathbb{E} \left[\int_{0}^{T}\left\Vert u^{\epsilon,\gamma}_{1}\left(\frac{\delta^{2}s}{\epsilon}\right)\right\Vert ds\right]\right|^{2}
 +\left|\frac{\delta}{\epsilon}\mathbb{E} \left[\int_{0}^{T}\left\Vert u^{\epsilon,\gamma}_{2}\left(\frac{\delta^{2}s}{\epsilon}\right)\right\Vert ds\right]\right|^{2}\right\},\nonumber
\end{align}
where the constants $C_{0}, C_{1}$ depends only on the Lipschitz constants of $b,c,f,g,\sigma,\tau_{1},\tau_{2}$ and on the sup norm of $\sigma,\tau_{1},\tau_{2}$. Defining for notational convenience
\[
\left|a^{\epsilon}_{T}\right|^{2}\doteq \frac{\delta}{\epsilon}\mathbb{E}\left[  \int_{0}^{T}\left\Vert u^{\epsilon,\gamma}_{1}\left(\frac{\delta^{2}s}{\epsilon}\right)\right\Vert ds\right]+\frac{\delta}{\epsilon}\mathbb{E} \left[\int_{0}^{T}\left\Vert u^{\epsilon,\gamma}_{2}\left(\frac{\delta^{2}s}{\epsilon}\right)\right\Vert ds\right].
\]

Gronwall lemma, gives us
\begin{align}
m^{\epsilon}_{T}&\leq C_{1}\left|a^{\epsilon}_{T}\right|^{2}+ C_{0}C_{1}\int_{0}^{T}\left|a^{\epsilon}_{s}\right|^{2}e^{C_{0}(T-s)}ds.\nonumber
\end{align}

Let us now rewrite and upper bound $\left|a^{\epsilon}_{T}\right|^{2}$. We notice that, H\"{o}lder inequality followed by Young's inequality give us
\begin{align}
\left|a^{\epsilon}_{T}\right|^{2}&=\left|\frac{\epsilon}{\delta}\frac{1}{\epsilon}\mathbb{E}\left[  \int_{0}^{\frac{\delta^{2}T}{\epsilon}}\left\Vert u^{\epsilon,\gamma}_{1}\left(s\right)\right\Vert ds\right]\right|^{2}+\left|\frac{\epsilon}{\delta}\frac{1}{\epsilon}\mathbb{E}\left[  \int_{0}^{\frac{\delta^{2}T}{\epsilon}}\left\Vert u^{\epsilon,\gamma}_{2}\left(s\right)\right\Vert ds\right]\right|^{2}\nonumber\\
&\leq \frac{1}{\delta^{2}}\frac{\delta^{2}T}{\epsilon}\mathbb{E}\left[  \int_{0}^{\frac{\delta^{2}T}{\epsilon}}\left\Vert u^{\epsilon,\gamma}\left(s\right)\right\Vert^{2}ds\right]\nonumber\\
&= T \frac{1}{\epsilon}\mathbb{E}\left[  \int_{0}^{\frac{\delta^{2}T}{\epsilon}}\left\Vert u^{\epsilon,\gamma}\left(s\right)\right\Vert^{2}ds\right]\nonumber\\
&= T\nu^{\epsilon}_{T}\nonumber\\
&\leq \frac{T^{2}}{2}+\frac{\left|\nu^{\epsilon}_{T}\right|^{2}}{2}.\nonumber
\end{align}

Putting these estimates together, we obtain
\begin{align}
\left|\nu^{\epsilon}_{T}\right|^{2}&  \leq L_{h}^{2} \left|\frac{\delta}{\epsilon}\right|^{2} m^{\epsilon}_{T}\nonumber\\
&\leq L_{h}^{2}C_{1} \left|\frac{\delta}{\epsilon}\right|^{2} \left[ \left|a^{\epsilon}_{T}\right|^{2}+ C_{0}\int_{0}^{T}\left|a^{\epsilon}_{s}\right|^{2}e^{C_{0}(T-s)}ds\right]\nonumber\\
&\leq  L_{h}^{2}C_{1} \left|\frac{\delta}{\epsilon}\right|^{2} \left[ \left(\frac{T^{2}}{2}+\frac{\left|\nu^{\epsilon}_{T}\right|^{2}}{2}\right)+ C_{0}\int_{0}^{T}\left(\frac{s^{2}}{2}+\frac{\left|\nu^{\epsilon}_{s}\right|^{2}}{2}\right)e^{C_{0}(T-s)}ds\right].\nonumber
\end{align}

Therefore, by choosing $\delta/\epsilon$ sufficiently small such that $L_{h}^{2}C_{1} \left|\frac{\delta}{\epsilon}\right|^{2}\leq 1$, we have
\begin{align*}
\frac{\left|\nu^{\epsilon}_{T}\right|^{2}}{2}
&\leq L_{h}^{2}C_{1} \left|\frac{\delta}{\epsilon}\right|^{2} \left[ \frac{T^{2}}{2} + C_{0}\int_{0}^{T}\left(\frac{s^{2}}{2}+\frac{\left|\nu^{\epsilon}_{s}\right|^{2}}{2}\right)e^{C_{0}(T-s)}ds\right]\nonumber\\
&\leq L_{h}^{2}C_{1} \left|\frac{\delta}{\epsilon}\right|^{2} \left[ \frac{T^{2}}{2} + \frac{T^{2}}{2}(e^{C_{0}T}-1) +C_{0} \int_{0}^{T}\frac{\left|\nu^{\epsilon}_{s}\right|^{2}}{2}e^{C_{0}(T-s)}ds\right]\nonumber\\
&= L_{h}^{2}C_{1} \left|\frac{\delta}{\epsilon}\right|^{2} \left[   \frac{T^{2}}{2}e^{C_{0}T} +C_{0} \int_{0}^{T}\frac{\left|\nu^{\epsilon}_{s}\right|^{2}}{2}e^{C_{0}(T-s)}ds\right].
\end{align*}

Thus, we have
\begin{align*}
e^{-C_{0}T}\frac{\left|\nu^{\epsilon}_{T}\right|^{2}}{2}
&\leq L_{h}^{2}C_{1} \left|\frac{\delta}{\epsilon}\right|^{2} \left[   \frac{T^{2}}{2} +C_{0} \int_{0}^{T}e^{-C_{0}s}\frac{\left|\nu^{\epsilon}_{s}\right|^{2}}{2}ds\right].
\end{align*}

So, letting $\beta^{\epsilon}_{T}=L_{h}^{2}C_{1} \frac{T^{2}}{2}\left|\frac{\delta}{\epsilon}\right|^{2} $ and $\theta^{\epsilon}=L_{h}^{2}C_{1} C_{0}\left|\frac{\delta}{\epsilon}\right|^{2}$, Gronwall lemma guarantees that
\begin{align*}
e^{-C_{0}T}\frac{\left|\nu^{\epsilon}_{T}\right|^{2}}{2}&\leq \beta^{\epsilon}_{T}+\theta^{\epsilon} \int_{0}^{T}\beta^{\epsilon}_{s}e^{\theta^{\epsilon}(T-s)}ds.
\end{align*}

Since $\beta^{\epsilon}_{T}$ and $\theta^{\epsilon}$ go uniformly in $\gamma\in\Gamma$ to zero at the speed $O(\left(\frac{\delta}{\epsilon}\right)^{2})$ as $\epsilon\downarrow 0$, we get that
\[
\left|\nu^{\epsilon}_{T}\right|^{2}\leq C (\delta/\epsilon)^{2},
\]
where the constant $C$, depends on $T$, but not on $\epsilon,\delta$ or $\gamma$. This concludes the argument of why Condition
\ref{Eq:UniformlySquareIntegrableControlsAdditionalb} can be assumed without loss of generality.
\end{proof}

\section{Proof of Theorem \ref{T:MainTheorem3} }\label{S:ProofTheoremExplicitLDP}

In this section we prove that the explicit expression of the large deviation's action functional is given by Theorem \ref{T:MainTheorem3}.

Due to Theorem \ref{T:MainTheorem2},
we only need to prove that the rate function given in
(\ref{Eq:GeneralRateFunction}) can be written in the form
of Theorem \ref{T:MainTheorem3}. First, we notice that one can write
(\ref{Eq:GeneralRateFunction}) in terms of a local rate function,
in the form
\[
S(\phi)=\int_{0}^{T}L^{r}(\phi_{s},\dot{\phi}_{s})ds
\]
where we have defined
\[
L^{r}(x,v)=\inf_{\mathrm{P}\in\mathcal{A}_{x,v}^{r}}\frac{1}{2}\int
_{\mathcal{Z}\times\mathcal{Z}\times\Gamma}\left[\left\Vert z_{1}\right\Vert^{2}+\left\Vert z_{2}\right\Vert^{2}\right]\mathrm{P}(dz_{1}dz_{2}d\gamma)
\]
and
\begin{align*}
\mathcal{A}_{x,v}^{r}  &  =\left\{  \mathrm{P}\in\mathcal{P}\left(
\mathcal{Z}\times\mathcal{Z}\times\Gamma\right)  :\int_{\mathcal{Z}\times\mathcal{Z}\times\Gamma}\tilde{L}%
\tilde{f}(\gamma)\mathrm{P}(dz_{1}dz_{2}d\gamma)=0,\quad\forall\quad\tilde{f}%
\in\mathcal{D}(\tilde{L})\right. \\
& \hspace{0.2cm} \left.  \int_{\mathcal{Z}\times\mathcal{Z}\times\Gamma}\left[\left\Vert z_{1}\right\Vert^{2}+\left\Vert z_{2}\right\Vert^{2}\right]\mathrm{P}%
(dz_{1}dz_{2}d\gamma)<\infty,\text{ and }v=\lim_{\rho\rightarrow0}\int_{\mathcal{Z}%
\times\mathcal{Z}\times\Gamma}\tilde{\lambda}_{\rho}(x,\gamma,z_{1},z_{2})\mathrm{P}(dz_{1}dz_{2}d\gamma)\right\}
\end{align*}

This follows directly by the definition of a viable pair (Definition
\ref{D:ViablePair}). We call this representation the \textquotedblleft
relaxed\textquotedblright\ formulation since the control is characterized as a
distribution on $\mathcal{Z}\times\mathcal{Z}$ rather than an element of $\mathcal{Z}\times\mathcal{Z}$.
However, as we shall demonstrate below, the structure of the problem allows us
to rewrite the relaxed formulation of the local rate function in terms of an
ordinary formulation of an equivalent local rate function, where the control
is indeed given as an element of $\mathcal{Z}\times\mathcal{Z}$. In preparation for this
representation, we notice that any element $\mathrm{P}\in\mathcal{P}\left(
\mathcal{Z}\times\mathcal{Z}\times \Gamma\right)  $ can be written of a stochastic kernel on
$\mathcal{Z}\times\mathcal{Z}$ given $\Gamma$ and a probability measure on $\Gamma$, namely
\[
\mathrm{P}(dz_{1}dz_{2}d\gamma)=\eta(dz_{1}dz_{2}|\gamma)\pi(d\gamma).
\]

Hence, by the definition of viability, we obtain for every $\tilde{f}%
\in\mathcal{D}(\tilde{L})$ that
\[
\int_{\Gamma}\tilde{L}\tilde{f}(\gamma)\pi(d\gamma)=0
\]
where we used the independence of $\tilde{L}$ on $z$ to eliminate the
stochastic kernel $\eta$. Then Proposition \ref{P:NewMeasureRandomCase}
guarantees that $\pi$ takes the form
\[
\pi(d\gamma)\doteq\frac{\tilde{m}(\gamma)}{\mathrm{E}^{\nu}\tilde{m}(\cdot
)}\nu(d\gamma)
\]
and is actually an invariant, ergodic and reversible probability measure for
the process associated with the operator $\tilde{L}$, or equivalently for the
environment process $\gamma_{t}$ as given by (\ref{Eq:EnvironmentProcess}).
Next, since the cost $\left\Vert z\right\Vert^{2}$ is convex in $z=(z_{1},z_{2})$ and $\tilde{\lambda}_{\rho}$
is affine in $z$, the relaxed control formulation can be easily written in
terms of the ordinary control formulation%

\begin{equation}
L^{o}(x,v)=\inf_{\tilde{u}\in\mathcal{A}_{x,v}^{o}}\frac{1}{2}\mathrm{E}^{\pi
}\left[\left\Vert \tilde{u}_{1}(\cdot)\right\Vert^{2}+\left\Vert \tilde{u}_{2}(\cdot)\right\Vert^{2}\right] \label{Eq:LocalRateFunction_OrdinaryFormulation}%
\end{equation}
and
\[
\mathcal{A}_{x,v}^{o}=\left\{  \tilde{u}=(\tilde{u}_{1},\tilde{u}_{2}):\Gamma\mapsto\mathbb{R}%
^{d}:\mathrm{E}^{\pi}\left[\left\Vert \tilde{u}_{1}(\cdot)\right\Vert^{2}+\left\Vert \tilde{u}_{2}(\cdot)\right\Vert^{2}\right] <\infty,\text{ and }v=\lim
_{\rho\rightarrow0}\mathrm{E}^{\pi}\left[  \tilde{\lambda}_{\rho}%
(x,\cdot,\tilde{u}_{1}(\cdot),\tilde{u}_{2}(\cdot))\right]  \right\}  .
\]

Jensen's inequality and the fact that $\tilde{\lambda}_{\rho}(x,\gamma,z_{1},z_{2})$ is
affine in $z$ imply $L^{r}(x,v)\geq L^{o}(x,v)$. For the reverse inequality,
for given $\tilde{u}=(\tilde{u}_{1},\tilde{u}_{2})$ one can define a corresponding relaxed control by
$\mathrm{P}(dz_{1}dz_{2}d\gamma)=\delta_{\left(\tilde{u}_{1}(\gamma),\tilde{u}_{2}(\gamma)\right)}(dz_{1}dz_{2})\pi(d\gamma)$.

The next step is to prove that the infimization problem in
(\ref{Eq:LocalRateFunction_OrdinaryFormulation}) can be solved explicitly and
in particular that
\begin{equation}
L^{o}(x,v)=\frac{1}{2}(v-r(x))^{T}q^{-1}(x)(v-r(x)) \label{Eq:LocalRateFunction}%
\end{equation}
where%
\[
r(x)=\lim_{\rho\downarrow0}\mathrm{E}^{\pi}\left[  \tilde{c}(x,\cdot) +D\tilde{\chi}_{\rho
}(\cdot) \tilde{g}(x,\cdot)\right]  =\mathrm{E}^{\pi}[\tilde{c}(x,\cdot)+\tilde{\xi}%
(\cdot)\tilde{g}(x,\cdot)]
\]%
\begin{align*}
q(x)&=\lim_{\rho\downarrow0}\mathrm{E}^{\pi}\left[  (\tilde{\sigma}(x,\cdot)+D\tilde{\chi}_{\rho}%
(\cdot)\tilde{\tau}_{1}(\cdot))(\tilde{\sigma}(x,\cdot)+D\tilde{\chi}_{\rho}%
(\cdot)\tilde{\tau}_{1}(\cdot))^{T}+\left(D\tilde{\chi}_{\rho}%
(\cdot)\tilde{\tau}_{2}(\cdot)\right)\left(D\tilde{\chi}_{\rho}%
(\cdot)\tilde{\tau}_{2}(\cdot)\right)^{T}\right]\nonumber\\
&=\mathrm{E}^{\pi}\left[  (\tilde{\sigma}(x,\cdot)+\tilde{\xi}(\cdot)\tilde{\tau}_{1}(\cdot))(\tilde{\sigma}(x,\cdot)+\tilde{\xi}
(\cdot)\tilde{\tau}_{1}(\cdot))^{T}+\left(\tilde{\xi}
(\cdot)\tilde{\tau}_{2}(\cdot)\right)\left(\tilde{\xi}
(\cdot)\tilde{\tau}_{2}(\cdot)\right)^{T}\right]
\end{align*}

Let us first prove that for every $\tilde{u}=\left(\tilde{u}_{1},\tilde{u}_{2}\right)\in\mathcal{A}%
_{x,v}^{o}$
\begin{equation}
\mathrm{E}^{\pi}\left\Vert \tilde{u}(x,\cdot)\right\Vert^{2}\geq(v-r(x))^{T}q^{-1}(x)(v-r(x)).
\label{Eq:cost_bound}%
\end{equation}

By definition, any $\tilde{u}=\left(\tilde{u}_{1},\tilde{u}_{2}\right)\in\mathcal{A}_{x,v}^{o}$ satisfies
\begin{align}
v&=\lim_{\rho\rightarrow0}\mathrm{E}^{\pi}\left[  \tilde{\lambda}_{\rho
}(x,\cdot,\tilde{u}_{1}(\cdot),\tilde{u}_{1}(\cdot))\right]\nonumber\\
&=r(x)+\lim_{\rho\rightarrow
0}\mathrm{E}^{\pi}\left[ \left( \tilde{\sigma}(x,\cdot)+D\tilde{\chi}_{\rho}(\cdot)\tilde{\tau}_{1}(\cdot)\right)\tilde{u}_{1}(\cdot)+
D\tilde{\chi}_{\rho}(\cdot)\tilde{\tau}_{2}(\cdot)\tilde{u}_{2}(\cdot)\right]  .\nonumber
\end{align}

Treating $x$ as a parameter, define
\[
\hat{v}=v-r(x)=\lim_{\rho\rightarrow
0}\mathrm{E}^{\pi}\left[ \left( \tilde{\sigma}(x,\cdot)+D\tilde{\chi}_{\rho}(\cdot)\tilde{\tau}_{1}(\cdot)\right)\tilde{u}_{1}(\cdot)+
D\tilde{\chi}_{\rho}(\cdot)\tilde{\tau}_{2}(\cdot)\tilde{u}_{2}(\cdot)\right],
\]
and for notational convenience set
\[
\tilde{\kappa}_{1,\rho}(x,\gamma)=\tilde{\sigma}(x,\gamma)+D\tilde{\chi}_{\rho}(\gamma)\tilde{\tau}_{1}(\gamma)\quad\textrm{ and }
\tilde{\kappa}_{2,\rho}(x,\gamma)=D\tilde{\chi}_{\rho}(\gamma)\tilde{\tau}_{2}(\gamma)
\]

Next, we drop writing explicitly the dependence on the parameter $x$ and we
write $q^{-1}=W^{T}W$, where $W$ is an invertible matrix, so that $\hat{v}%
^{T}q^{-1}\hat{v}=\left\Vert W\hat{v}\right\Vert^{2}$.  Without loss of generality, we assume that
$\tilde{u}\in L^{2}(\Gamma)$ is such that $\mathrm{E}^{\pi}\left\Vert \tilde{u}%
(\cdot)\right\Vert^{2}=1$. By
Cauchy-Schwartz inequality in $\mathbb{R}^{m}$ we have
\begin{align}
\left\Vert W\hat{v}\right\Vert^{2}  &  =\left\langle W\hat{v},W\lim_{\rho\downarrow0}%
\mathrm{E}^{\pi}\left[  \tilde{\kappa}_{1,\rho}(\cdot)\tilde{u}_{1}(\cdot)+ \tilde{\kappa}_{2,\rho}(\cdot)\tilde{u}_{2}(\cdot)\right]
\right\rangle \nonumber\\
&  =\lim_{\rho\downarrow0}\mathrm{E}^{\pi}\left[  \left\langle \tilde{u}_{1}
(\cdot),\tilde{\kappa}_{1,\rho}^{T}(\cdot)W^{T}W\hat{v}\right\rangle + \left\langle \tilde{u}_{2}
(\cdot),\tilde{\kappa}_{2,\rho}^{T}(\cdot)W^{T}W\hat{v}\right\rangle\right]
\nonumber\\
&  \leq\lim_{\rho\downarrow0}\left(  \mathrm{E}^{\pi}\left[\left\Vert \tilde{\kappa
}_{1,\rho}^{T}(\cdot)W^{T}W\hat{v}\right\Vert^{2}+\left\Vert \tilde{\kappa
}_{2,\rho}^{T}(\cdot)W^{T}W\hat{v}\right\Vert^{2}\right] \right)  ^{1/2}\nonumber\\
&  =\lim_{\rho\downarrow0}\left(  \hat{v}^{T}W^{T}W\mathrm{E}^{\pi}\left[
\tilde{\kappa}_{1,\rho}(\cdot)\tilde{\kappa}_{1,\rho}^{T}(\cdot)+
\tilde{\kappa}_{2,\rho}(\cdot)\tilde{\kappa}_{2,\rho}^{T}(\cdot)\right]
W^{T}W\hat{v}\right)  ^{1/2}\nonumber\\
&  =\left(  \hat{v}^{T}W^{T}WqW^{T}W\hat{v}\right)  ^{1/2}\nonumber\\
&  =\left\Vert W\hat{v}\right\Vert.\nonumber
\end{align}

If $\left\Vert W\hat{v}\right\Vert=0$, then (\ref{Eq:cost_bound}) holds automatically. If
$\left\Vert W\hat{v}\right\Vert\neq0$, then the last display implies $\left\Vert W\hat{v}\right\Vert\leq1$, which
directly proves that%

\[
\mathrm{E}^{\pi}\left\Vert\tilde{u}(x,\cdot)\right\Vert^{2}=1\geq\left\Vert W\hat{v}\right\Vert^{2}=(v-r(x))^{T}%
q^{-1}(x)(v-r(x)).
\]

To prove that the inequality becomes an equality when taking the infimum over
all $\tilde{u}\in\mathcal{A}_{x,v}^{o}$, we need to find a $\tilde{u}\in
L^{2}(\Gamma)$ which attains the infimum. Define the elements of $L^{2}(\Gamma)$%

\[
\tilde{u}_{1,\rho}(x,\gamma;v)=\left(\tilde{\sigma}(\gamma)+ D\tilde
{\chi}_{\rho}(\gamma)\tilde{\tau}_{1}(\gamma)\right)^{T}q^{-1}(x)(v-r(x))
\]
and
\[
\tilde{u}_{2,\rho}(x,\gamma;v)=\left(D\tilde
{\chi}_{\rho}(\gamma)\tilde{\tau}_{2}(\gamma)\right)^{T}q^{-1}(x)(v-r(x))
\]
and set $\tilde{u}_{\rho}(x,\cdot;v)=\left(\tilde{u}_{1,\rho}(x,\cdot;v),\tilde{u}_{2,\rho}(x,\cdot;v)\right)$. A straightforward computation yields
\begin{align*}
\mathrm{E}^{\pi}\left\Vert \tilde{u}_{\rho}(x,\cdot;v)\right\Vert^{2}&=(v-r(x))^{T}q^{-1}(x)
\mathrm{E}^{\pi}\left[  (\tilde{\sigma}(x,\cdot)+D\tilde{\chi}_{\rho}%
(\cdot)\tilde{\tau}_{1}(\cdot))(\tilde{\sigma}(x,\cdot)+D\tilde{\chi}_{\rho}%
(\cdot)\tilde{\tau}_{1}(\cdot))^{T}+\right.\nonumber\\
&\hspace{6cm}\left.+\left(D\tilde{\chi}_{\rho}%
(\cdot)\tilde{\tau}_{2}(\cdot)\right)\left(D\tilde{\chi}_{\rho}%
(\cdot)\tilde{\tau}_{2}(\cdot)\right)^{T}\right]
q(x)(v-r(x))
\end{align*}

Thus, letting $\rho\downarrow0$, we obtain
\[
\lim_{\rho\downarrow0}\mathrm{E}^{\pi}\left\Vert\tilde{u}_{\rho}(x,\cdot;v)\right\Vert^{2}%
=(v-r(x))^{T}q^{-1}(x)(v-r(x))
\]

Hence, the element $\tilde{u}\in L^{2}(\Gamma)$ that we are looking for is the
$L^{2}(\pi)$ limit of $\tilde{u}_{\rho}$ as defined above. This is well
defined, since by Proposition 2.6 in \cite{Olla1994} $D\tilde{\chi}_{\rho}$
has a well defined $L^{2}(\pi)$ strong limit. Therefore, we have proven that
\[
L^{o}(x,v)=\frac{1}{2}(v-r(x))^{T}q^{-1}(x)(v-r(x))
\]
which concludes the proof of Theorem \ref{T:MainTheorem3}.

\appendix
\section{Quenched ergodic theorems }\label{S:QuenchedErgodicTheorems}
In this appendix we prove quenched ergodic theorems that are required for the proof of Theorem \ref{T:MainTheorem1}. For notational convenience and without loss of generality, we mostly consider a process $Y$ driven by a single
Brownian motion with diffusion coefficient $\kappa(y,\gamma)$ such that $\kappa\kappa^{T}=\tau_{1}\tau_{1}^{T}+\tau_{2}\tau_{2}^{T}$.

We prove the required ergodic result, Lemma \ref{L:ErgodicTheorem4} in a progressive  way. First, in Lemma \ref{L:ErgodicTheorem0} we recall the classical ergodic theorem. This is strengthened in  Lemma \ref{L:ErgodicTheorem1u} to cover cases of time shifts, uniformly with respect to $t\in[0,T]$. Then, in Lemmas \ref{L:ErgodicTheorem2}-\ref{L:ErgodicTheorem3} we consider the case of perturbing the drift of the process by small perturbations (uncontrolled and controlled case). The latter result together with the standard technique of freezing the slow component yield the proof of the ergodic statement in Lemma \ref{L:ErgodicTheorem4}.

\subsection{No time shifts, i.e. $t=0$}

\begin{lemma}
\label{L:ErgodicTheorem0} Consider the process $Y_{t}^{\epsilon,y_{0},\gamma}$
satisfying the SDE
\begin{equation}
Y_{t}^{\epsilon,y_{0},\gamma}=y_{0}+\frac{\epsilon}{\delta^{2}}\int_{0}%
^{t}f(Y_{s}^{\epsilon,y_{0},\gamma},\gamma)ds+\frac{\sqrt{\epsilon}}{\delta
}\int_{0}^{t}\kappa(Y_{s}^{\epsilon,y_{0},\gamma},\gamma)dW_{s}.
\label{Eq:BasicSDE}%
\end{equation}

Consider also a function $\tilde{\Psi}\in L^{2}(\Gamma)\cap L^{1}(\pi
)$ and define $\Psi
(y,\gamma)=\tilde{\Psi}(\tau_{y}\gamma)$. Assume that $\Psi:\mathbb{R}%
^{d-m}\times\Gamma\mapsto\mathbb{R}$ is measurable.

Denote $\bar{\Psi}\doteq\int_{\Gamma}\tilde{\Psi}(\gamma)\pi(d\gamma)$. Then
for any sequence $h(\epsilon)$ that is bounded from above and such that
$\delta^{2}/[\epsilon h(\epsilon)]\downarrow0$ (note that in particular
$h(\epsilon)$ could be a constant), there is a set $N$ of full $\pi-$measure
such that for every $\gamma\in N$
\[
\lim_{\epsilon\downarrow0}\mathbb{E}\left\vert \frac{1}{h(\epsilon
)}\int_{0}^{h(\epsilon)}\Psi(Y_{s}^{\epsilon,y_{0},\gamma}%
,\gamma)ds-\bar{\Psi}\right\vert =0.
\]
\end{lemma}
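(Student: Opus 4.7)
The plan is to rescale time to reduce the ergodic average to a long-time average for the environment process $\{\gamma_t\}$ defined in (\ref{Eq:EnvironmentProcess}), and then invoke its ergodicity under $\pi$. First I would set $T(\epsilon)=h(\epsilon)\epsilon/\delta^{2}$ and $\hat Y_r = Y^{\epsilon,y_0,\gamma}_{r\delta^{2}/\epsilon}$; by Brownian scaling, $\hat Y$ satisfies the time-homogeneous SDE $d\hat Y_r = f(\hat Y_r,\gamma)\,dr + \kappa(\hat Y_r,\gamma)\,d\tilde W_r$ for a rescaled Brownian motion $\tilde W$, so $\hat Y$ is the diffusion with generator $\mathcal{L}^\gamma$ started from $y_0$. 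The hypothesis $\delta^{2}/[\epsilon h(\epsilon)]\downarrow 0$ is exactly $T(\epsilon)\uparrow\infty$. A change of variables together with the stationarity identity $\Psi(y,\gamma)=\tilde\Psi(\tau_y\gamma)$ gives
\[
\frac{1}{h(\epsilon)}\int_0^{h(\epsilon)}\Psi(Y^{\epsilon,y_0,\gamma}_s,\gamma)\,ds \;=\; \frac{1}{T(\epsilon)}\int_0^{T(\epsilon)}\tilde\Psi(\gamma_r)\,dr,
\]
where $\{\gamma_r\}$ denotes the environment process started at $\gamma_0=\tau_{y_0}\gamma$.

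Next I would invoke Proposition \ref{P:NewMeasureRandomCase}, which states that $\pi$ is the unique ergodic invariant probability measure for $\{\gamma_t\}$. Since $\tilde\Psi\in L^{1}(\pi)$, the Birkhoff ergodic theorem applied to the Markov process $\{\gamma_t\}$ produces a set $\tilde N\subset\Gamma$ with $\pi(\tilde N)=1$ such that for every $\tilde\gamma\in\tilde N$, $\frac{1}{T}\int_0^T\tilde\Psi(\gamma_r)\,dr\to\bar\Psi$ almost surely under $\mathbb{P}_{\tilde\gamma}$ as $T\to\infty$. To upgrade this to the $L^{1}(\mathbb{P})$ convergence asserted by the lemma, I would invoke uniform integrability: the $L^{2}(\Gamma)$ hypothesis on $\tilde\Psi$, together with stationarity of $\{\gamma_t\}$ under $\pi$ and Jensen's inequality, yields the $T$-uniform bound
\[
\mathrm{E}^{\pi}\left[\left|\frac{1}{T}\int_0^T\tilde\Psi(\gamma_r)\,dr\right|^{2}\right]\le \mathrm{E}^{\pi}[\tilde\Psi^{2}],
\]
so, after discarding a further $\pi$-null set, one secures $\mathbb{P}_{\tilde\gamma}$-uniform integrability of the averages, and the almost-sure convergence upgrades to $L^{1}(\mathbb{P}_{\tilde\gamma})$.

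Finally, the statement of the lemma concerns the initial point $y_0$ of $Y^\epsilon$, so the environment is started at $\tau_{y_0}\gamma$ rather than $\gamma$ itself. I would therefore take $N=\{\gamma\in\Gamma:\tau_{y_0}\gamma\in\tilde N\}$. Under Condition \ref{A:Assumption2}, $\tilde m$ is bounded between two strictly positive constants, so $\pi$ and $\nu$ share the same null sets; hence $\nu(\tilde N)=1$, the shift-invariance of $\nu$ yields $\nu(N)=1$, and the same equivalence of null sets gives $\pi(N)=1$. The principal obstacle is precisely the passage from the pathwise almost-sure conclusion of Birkhoff's theorem to the $L^{1}(\mathbb{P})$ statement requested; this is where the extra $L^{2}(\Gamma)$ hypothesis on $\tilde\Psi$ (beyond $L^{1}(\pi)$) enters the argument, via the second-moment control above.
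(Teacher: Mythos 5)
Your overall strategy --- rescale time via $\hat Y_r = Y^{\epsilon,y_0,\gamma}_{r\delta^2/\epsilon}$ so that the ergodic average becomes a long-time average for the environment process, then invoke ergodicity of $\{\gamma_t\}$ under $\pi$ --- is exactly the paper's approach (the paper likewise sets $\hat Y_t=Y^{\epsilon,y_0,\gamma}_{\delta^2 t/\epsilon}$ and appeals to the ergodic theorem for $\{\gamma_t\}$). You are in fact more careful than the paper in one respect: the paper only displays the convergence of the \emph{means} to $\bar\Psi$ and then asserts the conclusion, whereas the lemma genuinely asks for an $L^1(\mathbb{P})$ statement, so some uniform-integrability input is needed on top of the pointwise Birkhoff conclusion, and you correctly flag this.

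However, the specific argument you give for the uniform-integrability step has a gap. From Jensen and stationarity you obtain the \emph{annealed} bound
\[
\mathrm{E}^{\pi}\left[\left|\frac{1}{T}\int_0^T\tilde\Psi(\gamma_r)\,dr\right|^{2}\right]\le \mathrm{E}^{\pi}\bigl[\tilde\Psi^{2}\bigr],
\]
and then assert that ``after discarding a further $\pi$-null set, one secures $\mathbb{P}_{\tilde\gamma}$-uniform integrability.'' This does not follow: a $T$-uniform bound on the $\pi$-integral of $g_T(\tilde\gamma):=\mathbb{E}_{\tilde\gamma}\bigl[|A_T|^2\bigr]$ (with $A_T$ the time average) says nothing about $\sup_{T}g_T(\tilde\gamma)$ being finite for $\pi$-a.e.\ $\tilde\gamma$; the exceptional set on which $g_T$ is large may move with $T$, and no single null set handles all $T$ simultaneously. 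The correct device is the Wiener maximal ergodic inequality in $L^2$: applied to $|\tilde\Psi|\in L^2(\pi)$, it gives that $M^*:=\sup_{T>0}\frac{1}{T}\int_0^T|\tilde\Psi|(\gamma_r)\,dr$ lies in $L^2(\mathbb{P}_\pi)\subset L^1(\mathbb{P}_\pi)$, so that $\mathbb{E}_{\tilde\gamma}\,M^*<\infty$ for $\pi$-a.e.\ $\tilde\gamma$, and then the $\mathbb{P}_{\tilde\gamma}$-a.s.\ Birkhoff convergence upgrades to $L^1(\mathbb{P}_{\tilde\gamma})$ by dominated convergence against $M^*$. This is indeed where the $L^2(\Gamma)$ hypothesis enters, as you anticipated, but through the maximal inequality rather than the Jensen bound alone. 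Your handling of the initial shift ($N=\{\gamma:\tau_{y_0}\gamma\in\tilde N\}$ together with equivalence of $\pi$ and $\nu$ and $\nu$-invariance under $\tau_{y_0}$) is correct.
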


\begin{proof}
[Proof of Lemma \ref{L:ErgodicTheorem0}]Let $\hat{Y}_{t}^{y_{0},\gamma
}=Y_{\delta^{2}t/\epsilon}^{\epsilon,y_{0},\gamma}$. Note that $\hat{Y}%
_{t}^{y_{0},\gamma}$ satisfies
\begin{equation}
\hat{Y}_{t}^{y_{0},\gamma}=y_{0}+\int_{0}^{t}f(\hat{Y}_{s}^{y_{0},\gamma
},\gamma)ds+\int_{0}^{t}\kappa(\hat{Y}_{s}^{y_{0},\gamma}%
,\gamma)dW_{s}, \label{Eq:Yhat}%
\end{equation}
and also that $\pi(d\gamma)$ is the invariant ergodic probability measure for
the environment process $\gamma_{t}=\tau_{\hat{Y}_{t}^{y_{0},\gamma}}\gamma$
(Proposition \ref{P:NewMeasureRandomCase}).

Suppose that $\delta^{2}/[\epsilon h(\epsilon)]\downarrow0$. By the ergodic
theorem, there is a set $N$ of full $\pi-$measure such that for any $\gamma\in
N$
\begin{align*}
\lim_{\epsilon\downarrow0}\mathbb{E}\left[  \frac{1}{h(\epsilon)}%
\int_{0}^{h(\epsilon)}\Psi(Y_{s}^{\epsilon,y_{0},\gamma},\gamma)ds\right]   &
=\lim_{\epsilon\downarrow0}\mathbb{E}\left[  \frac{\delta^{2}%
}{\epsilon h(\epsilon)}\int_{0}^{\frac{\epsilon h(\epsilon)}{\delta^{2}}}%
\Psi(\hat{Y}_{s}^{y_{0},\gamma},\gamma)ds\right] \\
&  \qquad=\lim_{\epsilon\downarrow0}\mathbb{E}\left[  \frac
{\delta^{2}}{\epsilon h(\epsilon)}\int_{0}^{\frac{\epsilon h(\epsilon)}%
{\delta^{2}}}\tilde{\Psi}(\tau_{\hat{Y}_{s}^{y_{0},\gamma}}\gamma)ds\right] \\
&  \qquad=\lim_{\epsilon\downarrow0}\mathbb{E}\left[  \frac
{\delta^{2}}{\epsilon h(\epsilon)}\int_{0}^{\frac{\epsilon h(\epsilon)}%
{\delta^{2}}}\tilde{\Psi}(\gamma_{s})ds\right] \\
&  \qquad=\bar{\Psi}.
\end{align*}

\end{proof}

It follows from Egoroff's theorem that for every $\eta>0$ there is a set
$N_{\eta}$ with $\pi\left[  N_{\eta}\right]  >1-\eta$, such that
\[
\lim_{\epsilon\downarrow0}\sup_{\gamma\in N_{\eta}}\mathbb{E}\left\vert \frac{1}{h(\epsilon)}\int_{0}^{h(\epsilon)}\Psi(Y_{s}%
^{\epsilon,y_{0},\gamma})ds-\bar{\Psi}\right\vert =0.
\]

\subsection{Time shifts and uniformity}

For notational purposes we will write that $h(\epsilon)\in \mathcal{H}_{1}^{N_{\eta}}$, if the pair $(h(\epsilon),N_{\eta})$ satisfies Condition \ref{A:h_functionUniform}.
\begin{condition}
\label{A:h_functionUniform} Let $\tilde{\Psi}\in L^{2}(\Gamma)\cap L^{1}(\pi
)$ and define the measurable function $\Psi
(y,\gamma)=\tilde{\Psi}(\tau_{y}\gamma)$.  For $\gamma\in\Gamma$ define
\[
\theta^{\gamma}(u)=\sup_{r>u}\mathbb{E}\left|  \frac{1}{r}\int_{0}^{r}%
\Psi(\hat{Y}_{s}^{y_{0},\gamma},\gamma)ds-\bar{\Psi}\right|  .
\]
For any $\eta\in(0,1)$, there exists a set $N_{\eta}$ with $\pi(N_{\eta})\geq1-\eta$ and a sequence $\{h(\epsilon),\epsilon>0\}$
such that the following are satisfied:

\begin{enumerate}
\item {$\frac{\delta^{2}/\epsilon}{h(\epsilon)}\rightarrow0$ as $\epsilon
\downarrow0$, }

\item {there exists  $\beta\in(0,1)$ such that $\frac{\sup_{\gamma\in N_{\eta}} \theta^{\gamma}\left(
\frac{1}{\left(  \delta^{2}/\epsilon\right)  ^{\beta}}\right)  }{h(\epsilon
)}\rightarrow0$, as $\epsilon\downarrow0$, and}

\item {$\frac{1}{h(\epsilon)}\sup_{\gamma\in N_{\eta}}\sup_{t\in[0,T]}\mathbb{E}\left|
\left(  \delta^{2}/\epsilon\right)  \int^{\frac{t}{\delta^{2}/\epsilon}}%
_{0}\Psi(\hat{Y}_{s}^{y_{0},\gamma},\gamma)ds-t \bar{\Psi}\right|
\rightarrow0$ as $\epsilon\downarrow0$}
\end{enumerate}
\end{condition}

Lemma \ref{L:ErgodicTheorem1u} shows that one in fact can find a pair $(h(\epsilon),N_{\eta})$ satisfies Condition \ref{A:h_functionUniform} in order to prove a uniform
in time $t\in[0,T]$, ergodic theorem.
\begin{lemma}
\label{L:ErgodicTheorem1u} Consider the setup and notations of Lemma
\ref{L:ErgodicTheorem0}. Fix  $\eta>0$.  Then there exists
a set $N_{\eta}$ such that $\pi(N_{\eta})\geq 1-\eta$ and $h(\epsilon)\in \mathcal{H}_{1}^{N_{\eta}} $ such that
\begin{equation}
\lim_{\epsilon\downarrow0}\sup_{\gamma\in N_{\eta}}\sup_{t\in[0,T]}\mathbb{E}\left\vert
\frac{1}{h(\epsilon)}\int_{t}^{t+h(\epsilon)}\Psi(Y_{s}^{\epsilon,y_{0}
,\gamma},\gamma)ds-\bar{\Psi}\right\vert =0.\label{Eq:ErgodicTheoremUniform}
\end{equation}
\end{lemma}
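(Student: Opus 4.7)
The plan is to first construct the pair $(h(\epsilon), N_\eta)$ witnessing Condition \ref{A:h_functionUniform} via Egoroff's theorem applied to Lemma \ref{L:ErgodicTheorem0}, and then deduce the uniform ergodic statement by a telescoping argument that writes each shifted time average as a difference of two ``from zero'' averages to which part (iii) of that condition directly applies.

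\emph{Step 1: Construction of $N_\eta$ and $h(\epsilon)$.} Rescale $\hat{Y}_t = Y^{\epsilon,y_0,\gamma}_{\delta^2 t/\epsilon}$, which satisfies \eqref{Eq:Yhat} and whose environment process $\gamma_t = \tau_{\hat Y_t}\gamma$ has invariant ergodic measure $\pi$ by Proposition \ref{P:NewMeasureRandomCase}. Lemma \ref{L:ErgodicTheorem0} then gives $\theta^\gamma(u)\to 0$ as $u\to\infty$ for $\pi$-a.e.\ $\gamma$. Egoroff's theorem applied to the nonincreasing functions $u\mapsto\theta^\gamma(u)$ produces $N_\eta$ with $\pi(N_\eta)\geq 1-\eta$ on which the convergence is uniform, i.e.\ $\bar\theta(u):=\sup_{\gamma\in N_\eta}\theta^\gamma(u)\to 0$.

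\emph{Step 2: Choice of $h(\epsilon)$.} Fix $\beta\in(0,1)$ and set $\rho(\epsilon):=\bar\theta\bigl((\delta^2/\epsilon)^{-\beta}\bigr)+(\delta^2/\epsilon)^{1-\beta}$, which tends to zero by Step 1 and the scaling hypothesis $\epsilon/\delta\uparrow\infty$. Take $h(\epsilon):=\sqrt{\rho(\epsilon)}\to 0$. Then $\rho(\epsilon)/h(\epsilon)\to 0$, and since $(\delta^2/\epsilon)\ll(\delta^2/\epsilon)^{1-\beta}\leq\rho(\epsilon)\ll h(\epsilon)$, parts (i) and (ii) of Condition \ref{A:h_functionUniform} hold. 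To verify (iii), split $t\in[0,T]$ into $t\leq(\delta^2/\epsilon)^{1-\beta}$ and $t>(\delta^2/\epsilon)^{1-\beta}$. On the large-time range, the change of variable $s\mapsto (\delta^2/\epsilon)s$ gives
\[
\mathbb{E}\Bigl|(\delta^2/\epsilon)\!\!\int_0^{t/(\delta^2/\epsilon)}\!\!\Psi(\hat Y_s,\gamma)\,ds - t\bar\Psi\Bigr| = t\,\mathbb{E}\Bigl|\tfrac{1}{t/(\delta^2/\epsilon)}\!\!\int_0^{t/(\delta^2/\epsilon)}\!\!\Psi(\hat Y_s,\gamma)\,ds-\bar\Psi\Bigr|\leq T\,\bar\theta\bigl((\delta^2/\epsilon)^{-\beta}\bigr),
\]
which is $o(h(\epsilon))$ uniformly in $\gamma\in N_\eta$. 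On the small-time range, using $\|\Psi\|_{L^1(\pi)}<\infty$ together with the bounded density of $\hat Y$ relative to $\pi$ (Condition \ref{A:Assumption1} and the remarks after Proposition \ref{P:NewMeasureRandomCase}), the same expectation is bounded by $C\,t \leq C(\delta^2/\epsilon)^{1-\beta}$, again $o(h(\epsilon))$.

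\emph{Step 3: From Condition \ref{A:h_functionUniform} to \eqref{Eq:ErgodicTheoremUniform}.} Given the $h(\epsilon)$ and $N_\eta$ from Step 2, write, for $t\in[0,T]$ and $s_{+}=t+h(\epsilon)$, $s_{-}=t$,
\[
\frac{1}{h(\epsilon)}\int_t^{t+h(\epsilon)}\Psi(Y_s^{\epsilon,y_0,\gamma},\gamma)\,ds - \bar\Psi = \frac{1}{h(\epsilon)}\bigl[A(s_+) - A(s_-)\bigr],
\]
where $A(s):=(\delta^2/\epsilon)\int_0^{s/(\delta^2/\epsilon)}\Psi(\hat Y_u,\gamma)\,du - s\bar\Psi$. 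Condition \ref{A:h_functionUniform}(iii) applied on $[0, T+h(\epsilon)]$, combined with $h(\epsilon)\to 0$, yields $\sup_{\gamma\in N_\eta}\sup_{t\in[0,T]}\mathbb{E}|A(s_\pm)|/h(\epsilon)\to 0$, which is exactly \eqref{Eq:ErgodicTheoremUniform}.

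\emph{Main obstacle.} The delicate point is Step 2, specifically verifying (iii): one must insert the intermediate scale $(\delta^2/\epsilon)^{1-\beta}$, below which the ergodic rate $\bar\theta$ is not yet effective and only crude $L^1$/moment bounds are available, above which uniformity over $N_\eta$ (secured by Egoroff) dominates. Balancing this scale against $\delta^2/\epsilon$ and the not-yet-specified rate $\bar\theta$ forces the ``slow enough'' choice of $h(\epsilon)$. Once this three-scale bookkeeping is done, Step 3 is a purely algebraic telescoping and the conclusion is immediate.
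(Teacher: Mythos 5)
Your proof follows the same overall route as the paper's: rescale time so that the environment process $\gamma_s=\tau_{\hat Y_s}\gamma$ appears with its invariant measure $\pi$, telescope the shifted time average into a difference of "from zero" averages, control these via the ergodic rate $\theta^\gamma$ together with Egoroff's theorem (which furnishes $N_\eta$), and break the circular dependence of $\theta^\gamma\bigl(h(\epsilon)/(\delta^2/\epsilon)\bigr)$ on $h(\epsilon)$ by fixing $\beta\in(0,1)$ and forcing $h(\epsilon)\geq(\delta^2/\epsilon)^{1-\beta}$. The main presentational differences: you make the choice of $h(\epsilon)$ fully concrete as $\sqrt{\rho(\epsilon)}$ (the paper only says "choose $h(\epsilon)$ slow enough"), and your Step~3 telescoping bounds both $A(s_+)$ and $A(s_-)$ via part~(iii) of Condition~\ref{A:h_functionUniform}, whereas the paper's decomposition normalizes by $t+h(\epsilon)$ and by $t$ separately so that it invokes $\theta^\gamma$ (hence part~(ii)) for the first piece and part~(iii) for the second. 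These are equivalent.

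There is, however, one claim in Step~2 that is wrong as stated. To control $\mathbb{E}\bigl|A(t)\bigr|$ on the small-time range $t\leq(\delta^2/\epsilon)^{1-\beta}$ you invoke "the bounded density of $\hat Y$ relative to $\pi$". But $\pi$ lives on $\Gamma$, and for a \emph{fixed} quenched $\gamma$ the environment process $\gamma_s=\tau_{\hat Y_s}\gamma$ is supported on the orbit $\{\tau_y\gamma:y\in\mathbb{R}^{d-m}\}$, which is a $\pi$-null set; the law of $\gamma_s$ is therefore singular with respect to $\pi$ and there is no such density, bounded or otherwise. The conclusion you want, namely that $\sup_{s\geq 0}\mathbb{E}|\Psi(\hat Y_s,\gamma)|$ is finite uniformly in $\gamma\in N_\eta$ (so $\mathbb{E}|A(t)|\leq Ct$), cannot be derived from $\tilde\Psi\in L^1(\pi)$ via absolute continuity. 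It does hold if $\tilde\Psi$ is bounded — which is the case in every application in the paper, since the functions $\Psi$ used in Lemmas~\ref{L:ErgodicTheorem2}--\ref{L:ErgodicTheorem4} are built from the uniformly bounded coefficients of Condition~\ref{A:Assumption1} (and $D\tilde\chi_\rho$, also bounded) — so the step is salvageable, but the stated justification should be replaced by the boundedness assumption. The paper's own proof skirts this by asserting the uniform-in-$t$ convergence follows "by the ergodic theorem, Lemma~\ref{L:ErgodicTheorem0}, and Egoroff's theorem" without spelling out the small-$t$ bound, so the gap you tried to fill is real; you just filled it with an incorrect mechanism.
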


\begin{proof}
[Proof of Lemma \ref{L:ErgodicTheorem1u}] We start with the following decomposition%

\begin{align}
&  \mathbb{E}\left|  \frac{1}{h(\epsilon)}\int_{t}^{t+h(\epsilon
)}\Psi(Y_{s}^{\epsilon,y_{0},\gamma},\gamma)ds-\bar{\Psi}\right|  =
\mathbb{E}\left|  \frac{\delta^{2}/\epsilon}{h(\epsilon)}\int
_{\frac{t}{\delta^{2}/\epsilon}}^{\frac{t+h(\epsilon)}{\delta^{2}/\epsilon}%
}\Psi(\hat{Y}_{s}^{y_{0},\gamma},\gamma)ds-\bar{\Psi}\right|
\nonumber\\
&  \quad=\mathbb{E}\left|  \frac{\delta^{2}/\epsilon}{h(\epsilon
)}\int_{0}^{\frac{t+h(\epsilon)}{\delta^{2}/\epsilon}}\Psi(\hat{Y}_{s}%
^{y_{0},\gamma},\gamma)ds-\frac{\delta^{2}/\epsilon}{h(\epsilon)}%
\int^{\frac{t}{\delta^{2}/\epsilon}}_{0}\Psi(\hat{Y}_{s}^{y_{0},\gamma
},\gamma)ds-\bar{\Psi}\right| \nonumber\\
&  \quad=\mathbb{E}\left|  \frac{t+h(\epsilon)}{h(\epsilon)} \left(
\frac{\delta^{2}/\epsilon}{t+h(\epsilon)}\int_{0}^{\frac{t+h(\epsilon)}%
{\delta^{2}/\epsilon}}\Psi(\hat{Y}_{s}^{y_{0},\gamma},\gamma
)ds-\bar{\Psi}\right)  -\frac{t}{h(\epsilon)}\left(  \frac{\delta^{2}%
/\epsilon}{t}\int^{\frac{t}{\delta^{2}/\epsilon}}_{0}\Psi(\hat{Y}_{s}%
^{y_{0},\gamma},\gamma)ds-\bar{\Psi}\right)  \right| \nonumber\\
&  \quad\leq\frac{T+h(\epsilon)}{h(\epsilon)} \mathbb{E}\left|
\frac{\delta^{2}/\epsilon}{t+h(\epsilon)}\int_{0}^{\frac{t+h(\epsilon)}%
{\delta^{2}/\epsilon}}\Psi(\hat{Y}_{s}^{y_{0},\gamma},\gamma
)ds-\bar{\Psi}\right|  + \frac{1}{h(\epsilon)}\mathbb{E}\left|
\delta^{2}/\epsilon\int^{\frac{t}{\delta^{2}/\epsilon}}_{0}\Psi(\hat{Y}%
_{s}^{y_{0},\gamma},\gamma)ds-t \bar{\Psi}\right| \nonumber\\
&  \quad\leq\frac{T+h(\epsilon)}{h(\epsilon)} \sup_{r>\frac{t+h(\epsilon
)}{\delta^{2}/\epsilon}}\mathbb{E}\left|  \frac{1}{r}\int_{0}^{r}%
\Psi(\hat{Y}_{s}^{y_{0},\gamma},\gamma)ds-\bar{\Psi}\right|  + \frac
{1}{h(\epsilon)}\mathbb{E}\left|  \delta^{2}/\epsilon\int^{\frac
{t}{\delta^{2}/\epsilon}}_{0}\Psi(\hat{Y}_{s}^{y_{0},\gamma}%
,\gamma)ds-t \bar{\Psi}\right| \nonumber\\
&  \quad\leq\frac{T+1}{h(\epsilon)} \theta^{\gamma}\left(  \frac{t+h(\epsilon
)}{\delta^{2}/\epsilon}\right)  + \frac{1}{h(\epsilon)}\mathbb{E}^{\gamma
}\left|  \left(  \delta^{2}/\epsilon\right)  \int^{\frac{t}{\delta
^{2}/\epsilon}}_{0}\Psi(\hat{Y}_{s}^{y_{0},\gamma},\gamma)ds-t
\bar{\Psi}\right|\nonumber
\end{align}
by choosing $h(\epsilon)<1$ and defining
\[
\theta^{\gamma}(u)=\sup_{r>u}\mathbb{E}\left|  \frac{1}{r}\int_{0}^{r}%
\Psi(\hat{Y}_{s}^{y_{0},\gamma},\gamma)ds-\bar{\Psi}\right|  .
\]

Thus, we have proven that
\begin{align}
&  \sup_{t\in[0,T]}\mathbb{E}\left|  \frac{1}{h(\epsilon)}\int
_{t}^{t+h(\epsilon)}\Psi(Y_{s}^{\epsilon,y_{0},\gamma},\gamma
)ds-\bar{\Psi}\right|  \leq\nonumber\\
&  \qquad\leq\frac{T+1}{h(\epsilon)} \sup_{t\in[0,T]}\theta^{\gamma}\left(
\frac{t+h(\epsilon)}{\delta^{2}/\epsilon}\right)  +\frac{1}{h(\epsilon)}%
\sup_{t\in[0,T]}\mathbb{E}\left|  \left(  \delta^{2}/\epsilon\right)
\int^{\frac{t}{\delta^{2}/\epsilon}}_{0}\Psi(\hat{Y}_{s}^{y_{0},\gamma
},\gamma)ds-t \bar{\Psi}\right|  \label{Eq:BoundForUniformStatement}%
\end{align}

Let us first treat the second term on the right hand side of
(\ref{Eq:BoundForUniformStatement}). By the ergodic theorem, Lemma
\ref{L:ErgodicTheorem0}, and Egoroff's theorem we know that there exists a set $N_{\eta}$ with $\pi(N_{\eta})\geq 1-\eta$ such that
\begin{equation*}
\lim_{\epsilon\downarrow0}\sup_{\gamma\in N_{\eta}}\sup_{t\in[0,T]}\mathbb{E}\left|  \left(
\delta^{2}/\epsilon\right)  \int^{\frac{t}{\delta^{2}/\epsilon}}_{0}\Psi
(\hat{Y}_{s}^{y_{0},\gamma},\gamma)ds-t \bar{\Psi}\right|  =0
\end{equation*}

So, if we choose $h(\epsilon)\downarrow0$ such that
\begin{equation*}
\lim_{\epsilon\downarrow0}\frac{1}{h(\epsilon)}\sup_{\gamma\in N_{\eta}}\sup_{t\in[0,T]}\mathbb{E}%
^{\gamma}\left|  \left(  \delta^{2}/\epsilon\right)  \int^{\frac{t}{\delta
^{2}/\epsilon}}_{0}\Psi(\hat{Y}_{s}^{y_{0},\gamma},\gamma)ds-t
\bar{\Psi}\right|  =0
\end{equation*}
we have that the second term on the right hand side of
(\ref{Eq:BoundForUniformStatement}) goes to zero. Next, we treat the first
term on the right hand side of (\ref{Eq:BoundForUniformStatement}). Since, the
function $\theta^{\gamma}(u)$ is decreasing, we get that
\[
\theta^{\gamma}\left(  \frac{t+h(\epsilon)}{\delta^{2}/\epsilon}\right)  \leq
\theta^{\gamma}\left(  \frac{h(\epsilon)}{\delta^{2}/\epsilon}\right)
\]

Thus, we have obtained that for every $\gamma\in \Gamma$
\begin{equation*}
\sup_{t\in[0,T]}\theta^{\gamma}\left(  \frac{t+h(\epsilon)}{\delta^{2}/\epsilon
}\right)  \leq\sup_{t\in[0,T]}\theta^{\gamma}\left(  \frac{h(\epsilon)}{\delta
^{2}/\epsilon}\right)  =\theta^{\gamma}\left(  \frac{h(\epsilon)}{\delta
^{2}/\epsilon}\right)
\end{equation*}
Notice that because $h(\epsilon)$ is chosen such that $\frac{\delta
^{2}/\epsilon}{h(\epsilon)}\downarrow0$, Lemma \ref{L:ErgodicTheorem0} and Egoroff's theorem,
imply that
\[
\lim_{\epsilon\downarrow0}\sup_{\gamma\in N_{\eta}}\theta^{\gamma}\left(  \frac{h(\epsilon)}{\delta
^{2}/\epsilon}\right)  =0
\]

Therefore, the first term on the right hand side of
(\ref{Eq:BoundForUniformStatement}) goes to zero, if we can choose
$h(\epsilon)$, such that $\sup_{\gamma\in N_{\eta}}\theta^{\gamma}\left(  \frac{h(\epsilon)}{\delta
^{2}/\epsilon}\right)  /h(\epsilon)\downarrow0$. This is a little bit tricky
here because the argument of $\theta$ depends on $h(\epsilon)$. However, this can be done as
follows. Fix $\beta\in(0,1)$ (e.g., $\beta=1/2$) and choose $h(\epsilon
)\geq\left(  \delta^{2}/\epsilon\right)  ^{1-\beta}$. Then, the monotonicity
of $f$, implies that
\[
\theta^{\gamma}\left(  \frac{h(\epsilon)}{\delta^{2}/\epsilon}\right)  \leq
\theta^{\gamma}\left(  \frac{1}{\left(  \delta^{2}/\epsilon\right)  ^{\beta}%
}\right)  \downarrow0
\]

This proves that we can choose $h(\epsilon)$ such that the first term of the
right hand of (\ref{Eq:BoundForUniformStatement}) goes to zero. The claim
follows, by noticing that the previous computations imply that we can choose
$h(\epsilon)$ that may go to zero, but slowly enough, such that both the first
and the second term on the right hand side of
(\ref{Eq:BoundForUniformStatement}) go to zero.
\end{proof}

\subsection{Ergodic theorems with perturbation by small drift-Uncontrolled case}

\begin{lemma}
\label{L:ErgodicTheorem2} Consider the process $Y_{t}^{\epsilon,y_{0},\gamma}$
satisfying the SDE
\[
Y_{t}^{\epsilon,x,y_{0},\gamma}=y_{0}+\frac{\epsilon}{\delta^{2}}\int_{0}%
^{t}f(Y_{s}^{\epsilon,x,y_{0},\gamma},\gamma)ds+\frac{1}{\delta}\int_{0}%
^{t}g(s,x,Y_{s}^{\epsilon,x,y_{0},\gamma},\gamma)ds+\frac{\sqrt{\epsilon}%
}{\delta}\int_{0}^{t}\kappa(Y_{s}^{\epsilon,x,y_{0},\gamma}%
,\gamma)dW_{s}%
\]

Let us consider a function $\tilde{\Psi}:[0,T]\times\mathbb{R}^{m}\times
\Gamma$ such that $\tilde{\Psi}(t,x,\cdot)\in L^{2}(\Gamma)\cap L^{1}(\pi
)$ and define $\Psi(t,x,y,\gamma)=\tilde{\Psi
}(t,x,\tau_{y}\gamma)$. We assume that the function $\Psi:[0,T]\times
\mathbb{R}^{m}\times\mathbb{R}^{d-m}\times\Gamma\mapsto\mathbb{R}$ is
measurable, piecewise constant in $t$ and uniformly continuous in $x$ with
respect to $(t,y)$.

Denote $\bar{\Psi}(t,x)\doteq\int_{\Gamma}\tilde{\Psi}(t,x,\gamma)\pi
(d\gamma)$ for all $(t,x)\in[0,T]\times\mathbb{R}^{m}$. Fix  $\eta>0$.  Then there exists
a set $N_{\eta}$ such that $\pi(N_{\eta})\geq 1-\eta$ and $h(\epsilon)\in \mathcal{H}_{1}^{N_{\eta}} $ such that
\begin{equation*}
\lim_{\epsilon\downarrow0}\sup_{\gamma\in N_{\eta}}\sup_{0\leq t\leq T}\mathbb{E}\left\vert
\frac{1}{h(\epsilon)}\int_{t}^{t+h(\epsilon)}\Psi(s,x,Y_{s}^{\epsilon,x,y_{0},\gamma},\gamma)ds-\bar{\Psi}(t,x)\right\vert =0
\end{equation*}
locally uniformly with respect to the parameter $x\in\mathbb{R}^{m}$.
\end{lemma}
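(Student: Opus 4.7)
My plan is to reduce the claim to Lemma~\ref{L:ErgodicTheorem1u} in two stages: first, remove the $(s,x)$-dependence of $\Psi$; second, handle the lower-order drift $\tfrac{1}{\delta}g$ by an It\^o/corrector argument.

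For the first stage, since $\Psi$ is piecewise constant in $s$ with finitely many breakpoints, for $\epsilon$ small the window $[t,t+h(\epsilon)]$ lies inside a single constant piece for all $t$ outside an $O(h(\epsilon))$-neighborhood of the breakpoints; in the exceptional set the integral is split at the breakpoint and each piece treated separately. The $x$-dependence is removed by fixing a compact set $K\subset\mathbb{R}^m$, covering it with an $\eta'$-net $\{x_i\}$, and invoking uniform continuity of $\Psi$ in $x$ (uniform in $(s,y,\gamma)$) to replace $\Psi(s,x,y,\gamma)$ by $\Psi(s,x_i,y,\gamma)$ at an $O(\eta')$ additive cost; a diagonal $\eta'\downarrow 0$ closes the reduction. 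It therefore suffices to establish the claim for $\Psi=\Psi(y,\gamma)=\tilde\Psi(\tau_y\gamma)$ with bounded $\tilde\Psi\in L^2(\Gamma)\cap L^1(\pi)$.

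For the second stage, introduce the corrector $\tilde\psi_\rho\in\mathcal{H}^1$ solving $\rho\tilde\psi_\rho-\tilde L\tilde\psi_\rho=\tilde\Psi-\bar\Psi$ with $\rho=\rho(\epsilon)\downarrow 0$ to be chosen, set $\psi_\rho(y,\gamma)=\tilde\psi_\rho(\tau_y\gamma)$, and apply It\^o's formula to $\psi_\rho(Y_s^{\epsilon,x,y_0,\gamma},\gamma)$ under the generator $\tfrac{\epsilon}{\delta^2}\mathcal{L}^\gamma+\tfrac{1}{\delta}g\cdot\nabla_y$. Using the identity $\mathcal{L}^\gamma\psi_\rho=\rho\psi_\rho-(\Psi-\bar\Psi)$ and rearranging yields the decomposition
\[
\frac{1}{h(\epsilon)}\int_t^{t+h(\epsilon)}\bigl(\Psi(Y_s,\gamma)-\bar\Psi\bigr)ds=\mathrm{I}+\mathrm{II}+\mathrm{III}+\mathrm{IV},
\]
with $\mathrm{I}=\tfrac{\rho}{h(\epsilon)}\int\psi_\rho ds$, $\mathrm{II}=-\tfrac{\delta^2}{\epsilon h(\epsilon)}[\psi_\rho(Y_{t+h(\epsilon)})-\psi_\rho(Y_t)]$ the It\^o boundary term, $\mathrm{III}=\tfrac{\delta}{\epsilon h(\epsilon)}\int\langle g,\nabla_y\psi_\rho\rangle ds$ the perturbation term, and $\mathrm{IV}=\tfrac{\delta}{\sqrt{\epsilon}\,h(\epsilon)}\int\langle\nabla_y\psi_\rho,\kappa\,dW_s\rangle$ the stochastic-integral remainder. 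Using the uniform-in-$\rho$ bounds $\rho\,\mathrm{E}^\pi\tilde\psi_\rho^2\leq K$ and $\mathrm{E}^\pi|D\tilde\psi_\rho|^2\leq K$ recalled in Section~\ref{S:AssumptionMainResult}, together with Lemma~\ref{L:ErgodicTheorem1u} applied to $|\tilde\psi_\rho|$ and $|D\tilde\psi_\rho|^2$ to replace time averages by $\pi$-expectations, and It\^o isometry for $\mathrm{IV}$, one obtains $\mathbb{E}|\mathrm{I}|=O(\sqrt{\rho})$, $\mathbb{E}|\mathrm{II}|=O(\delta^2/(\epsilon h(\epsilon)\sqrt{\rho}))$, $\mathbb{E}|\mathrm{III}|=O(\delta/\epsilon)$ and $\mathbb{E}|\mathrm{IV}|^2=O(\delta^2/(\epsilon h(\epsilon)))$, all uniformly in $t\in[0,T]$ and $\gamma\in N_\eta$. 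Choosing $\rho(\epsilon)\downarrow 0$ with $\rho\gg[\delta^2/(\epsilon h(\epsilon))]^{2/3}$ makes all four terms vanish, and local uniformity in $x$ is preserved through the first stage.

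\emph{Main obstacle.} The principal difficulty is that the corrector $\tilde\psi_\rho$ is only controlled in $L^2(\pi)$ with norm diverging like $1/\sqrt{\rho}$, so converting the pathwise decomposition into uniform-in-$(t,\gamma)$ $L^1$-estimates requires applying Lemma~\ref{L:ErgodicTheorem1u} to the $\rho$-dependent integrands $|\tilde\psi_\rho|$ and $|D\tilde\psi_\rho|^2$, and verifying that the exceptional set $N_\eta$ and the rate sequence $h(\epsilon)\in\mathcal{H}_1^{N_\eta}$ can be chosen uniformly across the family $\{\tilde\psi_\rho\}_{\rho>0}$ as $\rho(\epsilon)\downarrow 0$. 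The delicate joint balancing of $\rho(\epsilon)$, $h(\epsilon)$, and $\delta^2/(\epsilon h(\epsilon))$ so that all four error terms vanish simultaneously is where the technical care resides.
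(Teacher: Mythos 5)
Your approach is genuinely different from the paper's: the paper handles the lower-order drift $\tfrac{1}{\delta}g$ by a Girsanov change of measure on the rescaled process $\hat Y^{\epsilon,x,y_0,\gamma}_t = Y^{\epsilon,x,y_0,\gamma}_{\delta^2 t/\epsilon}$, showing the Radon--Nikodym martingale $M^{\epsilon,\gamma}_T$ converges to $1$ in $L^2$ uniformly over $\gamma$ (because the Girsanov exponent carries a factor $\delta/\epsilon$), and then feeding the \emph{unperturbed} time averages into Lemma~\ref{L:ErgodicTheorem1u}; you instead introduce a resolvent corrector $\tilde\psi_\rho$ and run an It\^o decomposition with a coupled choice of $\rho(\epsilon)$. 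Your stage~1 reductions (piecewise constancy in $s$, $\eta'$-net in $x$) are in the spirit of what the paper invokes via Skorokhod's Theorem~II.3.11, so that part is fine.

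However, the corrector route as you describe it has a genuine gap, and it is precisely the one you flag. Terms $\mathrm{I}$, $\mathrm{III}$, $\mathrm{IV}$ require replacing time averages of $|\psi_\rho|$ and $|D\psi_\rho|^2$ over a window $[t,t+h(\epsilon)]$ by the $\pi$-averages $\mathrm{E}^{\pi}|\tilde\psi_\rho|$, $\mathrm{E}^{\pi}|D\tilde\psi_\rho|^2$. You propose to do this via Lemma~\ref{L:ErgodicTheorem1u}, but that lemma is proved for a \emph{fixed} function $\tilde\Psi\in L^2(\Gamma)\cap L^1(\pi)$ and for the \emph{unperturbed} process $Y^{\epsilon,y_0,\gamma}$: the exceptional set $N_\eta$ and the rate $h(\epsilon)\in\mathcal H_1^{N_\eta}$ it produces (via Egoroff and the function $\theta^\gamma$ in Condition~\ref{A:h_functionUniform}) depend on the chosen function. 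You need it for a family $\{\tilde\psi_\rho\}_{\rho\downarrow 0}$ with $\mathrm{E}^\pi\tilde\psi_\rho^2\sim 1/\rho\to\infty$, with $N_\eta$ and $h(\epsilon)$ uniform across this family; no argument is given for this and it is not automatic. Worse, your time averages are along the \emph{perturbed} process $Y^{\epsilon,x,y_0,\gamma}$ (carrying the $\tfrac1\delta g$ drift), whereas Lemma~\ref{L:ErgodicTheorem1u} is stated for the unperturbed one; transferring the ergodic estimate from unperturbed to perturbed is exactly the content of Lemma~\ref{L:ErgodicTheorem2} itself, so invoking it here is circular unless you also add a Girsanov step --- at which point you have recovered the paper's argument plus extra machinery. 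A crude replacement by $L^\infty$ bounds does not help: the resolvent only gives $\|\tilde\psi_\rho\|_\infty=O(1/\rho)$, so $\rho\|\tilde\psi_\rho\|_\infty$ does not vanish. The paper's Girsanov argument avoids all of this: the corrector never appears, Lemma~\ref{L:ErgodicTheorem1u} is applied only to the fixed function $\Psi$ (and $\Psi^2$) along the unperturbed dynamics, and the perturbation is absorbed entirely into the density $M^{\epsilon,\gamma}_T$ via the estimate $(\delta/\epsilon)^2\mathbb{E}\int_0^T\|\phi\|^2\,ds\to 0$ uniformly in $\gamma$. (As a minor side remark, the exponent $2/3$ in your final balancing would more naturally be $\rho\asymp\delta^2/(\epsilon h(\epsilon))$, but since $\delta^2/(\epsilon h)\to 0$ this does not affect the logic; the decisive issue is the one above.)
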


\begin{proof}
[Proof of Lemma \ref{L:ErgodicTheorem2}]Let us set $\hat{Y}_{t}^{\epsilon, x,y_{0}%
,\gamma}=Y_{\delta^{2}t/\epsilon}^{\epsilon,x,y_{0},\gamma}$. Notice
that $\hat{Y}_{t}^{\epsilon,x,y_{0},\gamma}$ satisfies
\[
\hat{Y}_{t}^{\epsilon,x,y_{0},\gamma}=y_{0}+\int_{0}^{t}f(\hat{Y}_{s}%
^{\epsilon,x,y_{0},\gamma},\gamma)ds+\frac{\delta}{\epsilon}\int_{0}%
^{t}g\left(  \frac{\delta^{2}}{\epsilon}s,x,\hat{Y}_{s}^{\epsilon, x,y_{0}%
,\gamma},\gamma\right)  ds+\int_{0}^{t}\kappa(\hat{Y}%
_{s}^{\epsilon,x,y_{0},\gamma},\gamma)dW_{s}%
\]

Slightly abusing notation, we denote by $Y_{t}^{\epsilon,y_{0},\gamma}$ and
$\hat{Y}^{y_{0},\gamma}_{t}$ the processes corresponding to $Y_{t}%
^{\epsilon,x,y_{0},\gamma}$ and $\hat{Y}^{\epsilon,x,y_{0},\gamma}_{t}$ with
$c(t,x,y)=0$.

Lemma \ref{L:ErgodicTheorem1u} guarantees that the statement of the Lemma is
true for $Y_{t}^{\epsilon,y_{0},\gamma}$, namely that there exists
a set $N_{\eta}$ such that $\pi(N_{\eta})\geq 1-\eta$ and $h(\epsilon)\in \mathcal{H}_{1}^{N_{\eta}} $ such that
\begin{equation}
\lim_{\epsilon\downarrow0}\sup_{\gamma\in N_{\eta}}\sup_{t\in[0,T]}\mathbb{E}\left\vert
\frac{1}{h(\epsilon)}\int_{t}^{t+h(\epsilon)}\Psi(s,x,Y_{s}^{\epsilon,y_{0}%
,\gamma},\gamma)ds-\bar{\Psi}(t,x)\right\vert =0.
\label{Eq:ErgodicTheorem1}%
\end{equation}

 The fact that the convergence is
also locally uniform with respect to the parameter $x\in\mathbb{R}^{m}$
follows by the uniform continuity of $\Psi$ in $x$. This implies that in Lemma
\ref{L:ErgodicTheorem1u}, we can choose the sequence $h(\epsilon)$ so that the
convergence holds uniformly with respect to $x$ in each bounded region,
see for example Theorem II.3.11 in \cite{Skorokhod1987}.

To translate this statement to what we need we use Girsanov's theorem on the
absolutely continuous change of measures on the space of trajectories in
$C([0,T];\mathbb{R}^{d-m})$. Let
\[
\phi(s,x,y,\gamma)=-\kappa^{-1}(y,\gamma)g(s,x,y,\gamma)
\]
and define the quantity
\begin{equation*}
M^{\epsilon,\gamma}_{T}=e^{\frac{\delta}{\epsilon}\frac{1}{\sqrt{2}} \int
_{0}^{T}\phi(\delta^{2}s/\epsilon,x,\hat{Y}^{y_{0},\gamma}_{s}%
,\gamma)dW_{s}-\frac{1}{2}\left(  \frac{\delta}{\epsilon}\right)  ^{2}\frac
{1}{2}\int_{0}^{T}\left\Vert  \phi(\delta^{2}s/\epsilon,x,\hat{Y}^{y_{0},\gamma}_{s},\gamma)\right\Vert  ^{2}ds}%
\end{equation*}

Then, by the aforementioned Girsanov's theorem, for each$\gamma\in\Gamma$,
$M^{\epsilon,\gamma}_{T}$ is a $\mathbb{P}^{\gamma}$ martingale.
Therefore, we obtain
\begin{align}
\mathbb{E}\frac{1}{h(\epsilon)}\int_{t}^{t+h(\epsilon)}%
\Psi(s,x,Y^{\epsilon,x,y_{0},\gamma}_{s},\gamma)ds  &  = \mathbb{E}%
\frac{\delta^{2}/\epsilon}{h(\epsilon)}\int_{\frac{t}{\delta
^{2}/\epsilon}}^{\frac{t+h(\epsilon)}{\delta^{2}/\epsilon}}\Psi(s,x,\hat
{Y}^{\epsilon,x,y_{0},\gamma}_{s},\gamma)ds\nonumber\\
&  =\mathbb{E}\left[  \left(  \frac{\delta^{2}/\epsilon}{h(\epsilon
)}\int_{\frac{t}{\delta^{2}/\epsilon}}^{\frac{t+h(\epsilon)}{\delta
^{2}/\epsilon}}\Psi(s,x,\hat{Y}^{y_{0},\gamma}_{s},\gamma)ds\right)
M^{\epsilon,\gamma}_{T}\right]\nonumber
\end{align}

Next, we prove that, for every $\gamma\in\Gamma$, $M^{\epsilon,\gamma}_{T}$
converges to $1$ in probability as $\epsilon\downarrow
0$. For this purpose, let us write $M^{\epsilon,\gamma}_{T}=e^{\mathcal{E}%
^{\epsilon,\gamma}_{T}}$, where
\[
\mathcal{E}^{\epsilon,\gamma}_{T}=\frac{\delta}{\epsilon}\frac{1}{\sqrt{2}}
\int_{0}^{T}\phi(\delta^{2}s/\epsilon,x,\hat{Y}^{y_{0},\gamma}%
_{s},\gamma)dW_{s}-\frac{1}{2}\left(  \frac{\delta}{\epsilon}\right)
^{2}\frac{1}{2}\int_{0}^{T}\left\Vert  \phi(\delta^{2}s/\epsilon,x,\hat{Y}%
^{y_{0},\gamma}_{s},\gamma)\right\Vert  ^{2}ds
\]
Notice that
\[
\mathcal{E}^{\epsilon,\gamma}_{T}=N^{\epsilon,\gamma}_{T}-\frac{1}{2}\left<
N^{\epsilon,\gamma}\right>  _{T}
\]
where
\begin{align}
N^{\epsilon,\gamma}_{T}=\frac{\delta}{\epsilon}\frac{1}{\sqrt{2}} \int_{0}%
^{T}\phi(\delta^{2}s/\epsilon,x,\hat{Y}^{y_{0},\gamma}_{s}%
,\gamma)dW_{s}\nonumber
\end{align}

Since, $\phi$ is by assumption bounded, we obtain that $N^{\epsilon,\gamma
}_{T}$ is a continuous martingale and $\left<  N^{\epsilon,\gamma}\right>
_{T}$ is its quadratic variation. Boundedness of $\phi$ and the assumption
$\delta/\epsilon\downarrow0$ as $\epsilon\downarrow0$, implies that
\begin{align}
\lim_{\epsilon\downarrow0}\sup_{\gamma\in\Gamma}\mathbb{E}\left<  N^{\epsilon,\gamma
}\right>  _{T}  &  =\lim_{\epsilon\downarrow0}\sup_{\gamma\in\Gamma}\frac{1}{2}\left(  \frac{\delta
}{\epsilon}\right)  ^{2}\mathbb{E}\int_{0}^{T}\left\Vert  \phi(\delta
^{2}s/\epsilon,x,\hat{Y}^{y_{0},\gamma}_{s},\gamma)\right\Vert
^{2}ds\nonumber\\
&  =0. \label{Eq:QuadraticVariationOfExponent}%
\end{align}

Hence, uniformly in $\gamma\in\Gamma$, $\left<  N^{\epsilon,\gamma}\right>  _{T}$
converges to $0$  in probability and by Problem 1.9.2 in
\cite{LipsterShirayev1989}, the same convergence holds for the martingale
$N^{\epsilon,\gamma}_{T}$ as well. Thus, we have obtained that uniformly in
$\gamma\in\Gamma$
\begin{equation}
M^{\epsilon,\gamma}_{T}=e^{\mathcal{E}^{\epsilon,\gamma}_{t}}\text{ converges
to }1\text{ in probability, as }\epsilon
\downarrow0. \label{Eq:ExponentialmartingaleConvergence}%
\end{equation}

Moreover, (\ref{Eq:ExponentialmartingaleConvergence}) together with
Scheff\'{e}'s theorem (Theorem 16.12 in \cite{Billingsley}) imply that
\begin{equation}
\sup_{\gamma\in\Gamma}\mathbb{E}\left|  M^{\epsilon,\gamma}_{T}-1\right|  \rightarrow
0\text{, as }\epsilon\downarrow0.
\label{Eq:ExponentialmartingaleConvergenceL1}%
\end{equation}

In fact, boundedness of $\phi$ implies that for every $\epsilon\in(0,1)$ and
$\gamma\in\Gamma$, $M^{\epsilon,\gamma}_{T}$ is a square integrable
martingale. The latter statement and convergence in probability
(\ref{Eq:ExponentialmartingaleConvergence}), imply that
\begin{equation}
\sup_{\gamma\in\Gamma}\mathbb{E}\left|  M^{\epsilon,\gamma}_{T}-1\right|  ^{2}%
\rightarrow0\text{, as }\epsilon\downarrow0.
\label{Eq:ExponentialmartingaleConvergenceL2}%
\end{equation}

Now that (\ref{Eq:ExponentialmartingaleConvergenceL2}) has been established,
we continue with the proof of the lemma. Choose $h(\epsilon)$, such that
(\ref{Eq:ErgodicTheorem1}) holds, we obtain
\begin{align}
&  \mathbb{E}\left|  \frac{1}{h(\epsilon)}\int_{t}^{t+h(\epsilon
)}\Psi(s,x,Y_{s}^{\epsilon,x,y_{0},\gamma},\gamma)ds-\bar{\Psi
}(t,x)\right| \nonumber\\
&  \qquad= \mathbb{E}\left|  \left(  \frac{\delta^{2}/\epsilon
}{h(\epsilon)}\int_{\frac{t}{\delta^{2}/\epsilon}}^{\frac{t+h(\epsilon
)}{\delta^{2}/\epsilon}}\Psi(s,x,\hat{Y}^{y_{0},\gamma}_{s}%
,\gamma)ds\right)  M^{\epsilon,\gamma}_{T}-\bar{\Psi}(t,x)\right| \nonumber\\
&  \qquad\leq\mathbb{E}\left|  \frac{\delta^{2}/\epsilon}%
{h(\epsilon)}\int_{\frac{t}{\delta^{2}/\epsilon}}^{\frac{t+h(\epsilon)}%
{\delta^{2}/\epsilon}}\Psi(s,x,\hat{Y}^{y_{0},\gamma}_{s}%
,\gamma)ds-\bar{\Psi}(t,x)\right| \nonumber\\
&  \qquad\quad+\mathbb{E}\left|  \left(  \frac{\delta^{2}/\epsilon
}{h(\epsilon)}\int_{\frac{t}{\delta^{2}/\epsilon}}^{\frac{t+h(\epsilon
)}{\delta^{2}/\epsilon}}\Psi(s,x,\hat{Y}^{y_{0},\gamma}_{s}%
,\gamma)ds\right)  (M^{\epsilon,\gamma}_{T}-1)\right|\nonumber
\end{align}

Clearly, the first term converges to zero by (\ref{Eq:ErgodicTheorem1}). The
second term also converges to zero by H\"{o}lder's inequality, Lemma \ref{L:ErgodicTheorem1u}  applied to $\Psi^{2}$ and (\ref{Eq:ExponentialmartingaleConvergenceL2}).

The claim that the convergence is locally uniformly with respect to the parameter
$x\in\mathbb{R}^{m}$ follows by the fact that this is true for
(\ref{Eq:ErgodicTheorem1}).
\end{proof}

\subsection{Ergodic theorems with perturbation by small drift-Controlled case}

\begin{lemma}
\label{L:ErgodicTheorem3} Fix $T<\infty$ and consider $\mathcal{A}$ to
be the set of progressively measurable controls such that
\begin{equation}
\int_{0}^{T}\left\Vert u^{\epsilon}(s)\right\Vert^{2}ds< N , 
\label{Eq:UniformlySquareIntegrableControlsAdditional}%
\end{equation}
where the constant $N$ does not depend on $\epsilon,\delta, T$ or $\gamma$ and additionally such that for $\delta/\epsilon\ll 1$
\begin{equation}
\frac{1}{\epsilon}\mathbb{E}\int_{0}^{\frac{\delta^{2}T}{\epsilon}}\left\Vert u^{\epsilon}(s)\right\Vert^{2}ds\leq C \delta/\epsilon,
\label{Eq:UniformlySquareIntegrableControlsAdditionalb}%
\end{equation}
where the constant $C$ depends on $T$, but not on $\epsilon,\delta$ or $\gamma$. Consider
the process $\bar{Y}_{t}^{\epsilon,x,y_{0},\gamma}$ satisfying the SDE
\begin{align}
\bar{Y}_{t}^{\epsilon,x,y_{0},\gamma}  &  =y_{0}+\frac{\epsilon}{\delta^{2}%
}\int_{0}^{t}f(\bar{Y}_{s}^{\epsilon,x,y_{0},\gamma},\gamma)ds+\frac{1}%
{\delta}\int_{0}^{t}\left[  g(s,x,\bar{Y}_{s}^{\epsilon,x,y_{0},\gamma}%
,\gamma)+\kappa(\bar{Y}_{s}^{\epsilon,x,y_{0},\gamma},\gamma)
u^{\epsilon}(s)\right]  ds\nonumber\\
&  \qquad+\frac{\sqrt{\epsilon}}{\delta}\int_{0}^{t}\kappa(\bar{Y}%
_{s}^{\epsilon,x,y_{0},\gamma},\gamma)dW_{s}\nonumber
\end{align}

Let us consider a function $\tilde{\Psi}:[0,T]\times\mathbb{R}^{m}\times
\Gamma$ such that $\tilde{\Psi}(t,x,\cdot)\in L^{2}(\Gamma)\cap L^{1}(\pi)$
 and define $\Psi(t,x,y,\gamma)=\tilde{\Psi
}(t,x,\tau_{y}\gamma)$. We assume that the function $\Psi:[0,T]\times
\mathbb{R}^{m}\times\mathbb{R}^{d-m}\times\Gamma\mapsto\mathbb{R}$ is
measurable, piecewise constant in $t$ and uniformly continuous in $x$ with
respect to $(t,y)$.

Denote $\bar{\Psi}(t,x)\doteq\int_{\Gamma}\tilde{\Psi}(t,x,\gamma)\pi
(d\gamma)$ for all $(t,x)\in[0,T]\times\mathbb{R}^{m}$. Fix  $\eta>0$.  Then there exists
a set $N_{\eta}$ such that $\pi(N_{\eta})\geq 1-\eta$ and $h(\epsilon)\in \mathcal{H}_{1}^{N_{\eta}} $ such that
\begin{equation*}
\lim_{\epsilon\downarrow0}\sup_{\gamma\in N_{\eta}}\sup_{0\leq t\leq T}\mathbb{E}\left\vert
\frac{1}{h(\epsilon)}\int_{t}^{t+h(\epsilon)}\Psi(s,x,\bar{Y}_{s}%
^{\epsilon,x,y_{0},\gamma},\gamma)ds-\bar{\Psi}(t,x)\right\vert =0
\end{equation*}
locally uniformly with respect to the parameter $x\in\mathbb{R}^{m}$.
\end{lemma}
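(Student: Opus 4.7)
The plan is to mirror the argument of Lemma \ref{L:ErgodicTheorem2}, using Girsanov's theorem to remove the additional drift created by the control, and then invoke the already-proven uncontrolled case. First, rescale time by setting $\hat{\bar{Y}}^{\epsilon,x,y_{0},\gamma}_{t}=\bar{Y}^{\epsilon,x,y_{0},\gamma}_{\delta^{2}t/\epsilon}$, so that
\[
d\hat{\bar{Y}}_{t} = f(\hat{\bar{Y}}_{t})\,dt + \frac{\delta}{\epsilon}\!\left[g(\delta^{2}t/\epsilon,x,\hat{\bar{Y}}_{t},\gamma)+\kappa(\hat{\bar{Y}}_{t},\gamma)u^{\epsilon}(\delta^{2}t/\epsilon)\right]\!dt + \kappa(\hat{\bar{Y}}_{t},\gamma)\,d\tilde{W}_{t}
\]
for a standard Brownian motion $\tilde{W}$. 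Set $\phi^{\epsilon}(s,x,y,\gamma)=-\kappa^{-1}(y,\gamma)g(s,x,y,\gamma)-u^{\epsilon}(s)$ and define the Girsanov exponential $M^{\epsilon,\gamma}_{T}$ with drift $(\delta/\epsilon)\phi^{\epsilon}$, so that under the tilted measure $\hat{\bar{Y}}$ has the same law as the uncontrolled process $\hat{Y}^{y_{0},\gamma}$ of Lemma \ref{L:ErgodicTheorem0}. Novikov's condition is satisfied, since $g$ is bounded, $\kappa$ is uniformly nondegenerate, and $\int_{0}^{T}\|u^{\epsilon}\|^{2}ds<N$ almost surely by Condition \ref{Eq:UniformlySquareIntegrableControlsAdditional}, so $M^{\epsilon,\gamma}_{T}$ is a genuine mean-one martingale.

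The key step is to show that $M^{\epsilon,\gamma}_{T}\to 1$ in $L^{2}$ uniformly for $\gamma\in N_{\eta}$. Since $\mathbb{E}[M^{\epsilon,\gamma}_{T}]=1$, this reduces to $\mathbb{E}[(M^{\epsilon,\gamma}_{T})^{2}]\to 1$, and a standard manipulation (writing $(M^{\epsilon,\gamma}_{T})^{2}$ as an exponential martingale with doubled drift, multiplied by $\exp((\delta/\epsilon)^{2}\!\int\!\|\phi^{\epsilon}\|^{2}ds)$) shows that it suffices to verify
\[
\Big(\tfrac{\delta}{\epsilon}\Big)^{2}\mathbb{E}\!\int_{0}^{T}\big\|\phi^{\epsilon}(\delta^{2}s/\epsilon,x,\cdot,\gamma)\big\|^{2}ds\longrightarrow 0
\]
uniformly in $\gamma$. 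The contribution of $\kappa^{-1}g$ is $O((\delta/\epsilon)^{2})$ by Condition \ref{A:Assumption1}. For the contribution of $u^{\epsilon}$, a change of variable $r=\delta^{2}s/\epsilon$ yields
\[
\Big(\tfrac{\delta}{\epsilon}\Big)^{2}\mathbb{E}\!\int_{0}^{T}\!\big\|u^{\epsilon}(\delta^{2}s/\epsilon)\big\|^{2}ds \;=\; \frac{1}{\epsilon}\mathbb{E}\!\int_{0}^{\delta^{2}T/\epsilon}\!\|u^{\epsilon}(r)\|^{2}dr \;\leq\; C\,\delta/\epsilon \;\longrightarrow\; 0,
\]
which is precisely what Condition \ref{Eq:UniformlySquareIntegrableControlsAdditionalb} was introduced to guarantee.

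Finally, translate the result back via the change of measure: for each $\gamma\in N_{\eta}$ and $t\in[0,T]$,
\[
\mathbb{E}\!\left[\frac{1}{h(\epsilon)}\!\int_{t}^{t+h(\epsilon)}\!\Psi(s,x,\bar{Y}_{s}^{\epsilon,x,y_{0},\gamma},\gamma)\,ds\right] = \mathbb{E}\!\left[\frac{1}{h(\epsilon)}\!\int_{t}^{t+h(\epsilon)}\!\Psi(s,x,Y_{s}^{\epsilon,x,y_{0},\gamma},\gamma)\,ds\cdot M^{\epsilon,\gamma}_{T}\right],
\]
where on the right the process $Y^{\epsilon,x,y_{0},\gamma}$ is exactly that of Lemma \ref{L:ErgodicTheorem2}. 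Splitting into a ``main'' term plus an error of the form $\mathbb{E}[|\text{avg of }\Psi|\,|M^{\epsilon,\gamma}_{T}-1|]$, the main term is handled by Lemma \ref{L:ErgodicTheorem2}, while Cauchy–Schwartz combined with the $L^{2}$ convergence $M^{\epsilon,\gamma}_{T}\to 1$ (uniform in $\gamma\in N_{\eta}$) and Lemma \ref{L:ErgodicTheorem1u} applied to $\Psi^{2}$ (to bound the second moment of the average) controls the error uniformly in $\gamma\in N_{\eta}$ and $t\in[0,T]$. Local uniformity in $x$ is inherited from Lemma \ref{L:ErgodicTheorem2}. The principal obstacle in this scheme is the $L^{2}$ control of the Girsanov density: because $u^{\epsilon}$ is not pathwise bounded but only square–integrable, one cannot copy the bounded–drift argument of Lemma \ref{L:ErgodicTheorem2} verbatim, and the strengthened Condition \ref{Eq:UniformlySquareIntegrableControlsAdditionalb} is essential precisely at this point.
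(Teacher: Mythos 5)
Your proof follows the paper's argument: rescale time, remove the extra drift via Girsanov, and control the quadratic variation of the exponent by splitting off the $u^{\epsilon}$ contribution, which is exactly what Condition~\ref{Eq:UniformlySquareIntegrableControlsAdditionalb} is built to bound after the change of variable $r=\delta^{2}s/\epsilon$. The only quibble is a small notational slip in the final change-of-measure identity: your Girsanov drift $\phi^{\epsilon}=-\kappa^{-1}g-u^{\epsilon}$ tilts $\hat{\bar{Y}}$ to the \emph{bare} process $\hat{Y}^{y_{0},\gamma}$ of Lemma~\ref{L:ErgodicTheorem0} (so the displayed right-hand side should feature that process and invoke Lemma~\ref{L:ErgodicTheorem1u}), not the $g$-drifted process of Lemma~\ref{L:ErgodicTheorem2} as written --- though either choice of Girsanov shift leads to the same conclusion.
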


\begin{proof}
[Proof of Lemma \ref{L:ErgodicTheorem3}]Let us set $\hat{\bar{Y}}_{t}%
^{\epsilon,x,y_{0},\gamma}=\bar{Y}_{\delta^{2}t/\epsilon}^{\epsilon,x,y_{0}%
,\gamma}$. Notice that $\hat{\bar{Y}}_{t}^{\epsilon,x,y_{0},\gamma}$
satisfies
\begin{align}
\hat{\bar{Y}}_{t}^{\epsilon,x,y_{0},\gamma}  &  =y_{0}+\int_{0}^{t}f(\hat
{\bar{Y}}_{s}^{\epsilon,x,y_{0},\gamma},\gamma)ds+\frac{\delta}{\epsilon}%
\int_{0}^{t}\left[  g\left(  \frac{\delta^{2}}{\epsilon}s,x,\hat{\bar{Y}}%
_{s}^{\epsilon,x,y_{0},\gamma},\gamma\right)  +\kappa(\hat{\bar{Y}%
}_{s}^{\epsilon,x,y_{0},\gamma},\gamma) u^{\epsilon}\left(\delta^{2}s/\epsilon
\right)\right]  ds\nonumber\\
&  \qquad+\int_{0}^{t}\kappa(\hat{\bar{Y}}_{s}^{x,y_{0},\gamma
,\epsilon},\gamma)dW_{s}\nonumber
\end{align}

Essentially, based on the condition of the allowable controls
(\ref{Eq:UniformlySquareIntegrableControlsAdditional}), the arguments of the
uncontrolled case, Lemma \ref{L:ErgodicTheorem2}, go through verbatim. The
only place that needs some discussion is in regards to the proof of the
statement corresponding to (\ref{Eq:QuadraticVariationOfExponent}). Let us
show now how this term can be treated. In the controlled case we have that
\[
\phi(s,x,y,\gamma)=-\kappa^{-1}(y,\gamma)g(s,x,y,\gamma)-u^{\epsilon}(s)
\]
and we want to prove that for every $\gamma\in \Gamma$
\begin{align}
\lim_{\epsilon\downarrow0}\mathbb{E}\left<  N^{\epsilon,\gamma
}\right>  _{T}  &  =\lim_{\epsilon\downarrow0}\frac{1}{2}\left(  \frac{\delta
}{\epsilon}\right)  ^{2}\mathbb{E}\int_{0}^{T}\left\Vert  \phi(\delta
^{2}s/\epsilon,x,\hat{Y}^{y_{0},\gamma}_{s},\gamma)\right\Vert  ^{2}ds=0.
\label{Eq:QuadraticVariationOfExponentControlled}%
\end{align}

It is clear that
\begin{align}
\mathbb{E}\left<  N^{\epsilon,\gamma}\right>  _{T}  &  =\frac{1}%
{2}\left(  \frac{\delta}{\epsilon}\right)  ^{2}\mathbb{E}\int_{0}%
^{T}\left\Vert  \phi(\delta^{2}s/\epsilon,x,\hat{Y}^{y_{0}}_{s}%
,\gamma)\right\Vert  ^{2}ds\nonumber\\
&  \leq \left(  \frac{\delta}{\epsilon}\right)  ^{2}\mathbb{E}\int
_{0}^{T}\left\Vert  \kappa^{-1}(\hat{Y}^{y_{0}}_{s},\gamma)g(\delta
^{2}s/\epsilon,x,\hat{Y}^{y_{0}}_{s},\gamma)\right\Vert  ^{2}ds+ \left(
\frac{\delta}{\epsilon}\right)  ^{2}\mathbb{E}\int_{0}^{T}\left\Vert
u^{\epsilon}\left(\delta^{2}s/\epsilon\right)\right\Vert  ^{2}ds\nonumber
\end{align}
The first term of the right hand side of the last display goes to zero by the
boundedness of $\left\Vert  \kappa^{-1}g\right\Vert  ^{2}$ (as in Lemma
\ref{L:ErgodicTheorem2}). So we only need to consider the second term. Here we
use Condition \ref{Eq:UniformlySquareIntegrableControlsAdditionalb}. In
particular, we notice that Condition
\ref{Eq:UniformlySquareIntegrableControlsAdditionalb} gives
\begin{align}
\lim_{\epsilon\downarrow 0}\left(  \frac{\delta}{\epsilon}\right)  ^{2}\mathbb{E}\int_{0}^{T}\left\Vert  u^{\epsilon
}\left(\delta^{2}s/\epsilon\right)\right\Vert  ^{2}ds  &  =\lim_{\epsilon\downarrow 0}\frac{1}{\epsilon}\mathbb{E}\int
_{0}^{\delta^{2}T/\epsilon}\left\Vert  u^{\epsilon}(s)\right\Vert  ^{2}ds=0\nonumber
\end{align}
uniformly in $\gamma\in\Gamma$. Thus we have completed the
proof of (\ref{Eq:QuadraticVariationOfExponentControlled}). This concludes the
proof of the lemma.
\end{proof}

\subsection{Ergodic theorem with explicit dependence on the slow process. }

In this subsection we consider the pair $(\bar{X}_{s}^{\epsilon,\gamma},
\bar{Y}_{s}^{\epsilon,\gamma})$ satisfying (\ref{Eq:Main2}) and the purpose is to prove Lemma \ref{L:ErgodicTheorem4}.

\begin{lemma}
\label{L:ErgodicTheorem4} Consider the set-up, assumptions and notations of Lemma \ref{L:ErgodicTheorem3}. Fix  $\eta>0$.  Then there exists
a set $N_{\eta}$ such that $\pi(N_{\eta})\geq 1-\eta$ and $h(\epsilon)\in \mathcal{H}_{1}^{N_{\eta}} $ such that
\begin{equation*}
\lim_{\epsilon\downarrow0}\sup_{\gamma\in N_{\eta}}\sup_{0\leq t\leq T}\mathbb{E}\left\vert
\frac{1}{h(\epsilon)}\int_{t}^{t+h(\epsilon)}\Psi\left(s,\bar{X}_{s}^{\epsilon,\gamma},
\bar{Y}_{s}^{\epsilon,\gamma},\gamma\right)ds-\bar{\Psi}(t,\bar{X}_{t}^{\epsilon,\gamma})\right\vert =0.
\end{equation*}
\end{lemma}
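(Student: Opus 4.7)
The plan is to reduce Lemma \ref{L:ErgodicTheorem4} to Lemma \ref{L:ErgodicTheorem3} by a standard \emph{freezing the slow component} argument. The idea is that because $h(\epsilon)\downarrow 0$, the slow process $\bar{X}^{\epsilon,\gamma}$ cannot move appreciably on an interval of length $h(\epsilon)$, so on $[t,t+h(\epsilon)]$ we can replace $\bar{X}^{\epsilon,\gamma}_s$ by its value $\bar{X}^{\epsilon,\gamma}_t$ at the left endpoint, up to an error that vanishes by uniform continuity of $\Psi$ in the $x$-variable. Once the $x$-argument is frozen, Lemma \ref{L:ErgodicTheorem3}, applied with parameter $x=\bar{X}^{\epsilon,\gamma}_t$ and exploiting that its convergence is \emph{locally uniform in $x$}, immediately produces the desired limit $\bar{\Psi}(t,\bar{X}^{\epsilon,\gamma}_t)$.

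More precisely, I would first pick the set $N_\eta$ and the sequence $h(\epsilon)\in\mathcal{H}_1^{N_\eta}$ supplied by Lemma \ref{L:ErgodicTheorem3}. Then I would write
\begin{align*}
\frac{1}{h(\epsilon)}\int_{t}^{t+h(\epsilon)}\Psi\bigl(s,\bar{X}^{\epsilon,\gamma}_{s},\bar{Y}^{\epsilon,\gamma}_{s},\gamma\bigr)\,ds
&=\frac{1}{h(\epsilon)}\int_{t}^{t+h(\epsilon)}\Psi\bigl(s,\bar{X}^{\epsilon,\gamma}_{t},\bar{Y}^{\epsilon,\gamma}_{s},\gamma\bigr)\,ds \\
&\quad+\frac{1}{h(\epsilon)}\int_{t}^{t+h(\epsilon)}\bigl[\Psi\bigl(s,\bar{X}^{\epsilon,\gamma}_{s},\bar{Y}^{\epsilon,\gamma}_{s},\gamma\bigr)-\Psi\bigl(s,\bar{X}^{\epsilon,\gamma}_{t},\bar{Y}^{\epsilon,\gamma}_{s},\gamma\bigr)\bigr]\,ds,
\end{align*}
and similarly add and subtract $\bar{\Psi}(t,\bar{X}^{\epsilon,\gamma}_t)$. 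The first term, conditional on $\bar{X}^{\epsilon,\gamma}_t=x$, is exactly the object controlled by Lemma \ref{L:ErgodicTheorem3}; by the local uniformity in $x$ provided there (and by a localisation to a large ball in $\mathbb R^m$, which $\bar{X}^{\epsilon,\gamma}$ does not leave with high probability thanks to (\ref{Eq:Ubound})), its $L^1$-distance from $\bar{\Psi}(t,\bar{X}^{\epsilon,\gamma}_t)$ tends to zero uniformly in $t\in[0,T]$ and $\gamma\in N_\eta$.

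The main obstacle is to show that the second, \textbf{freezing error}, term is negligible, since it reduces to bounding
\[
\mathbb{E}\sup_{s\in[t,t+h(\epsilon)]}\bigl\Vert \bar{X}^{\epsilon,\gamma}_{s}-\bar{X}^{\epsilon,\gamma}_{t}\bigr\Vert,
\]
and the slow equation (\ref{Eq:Main2}) contains the highly oscillatory term $(\epsilon/\delta)b(\bar{Y}^{\epsilon}_s)$ which a priori is of order $\epsilon/\delta\uparrow\infty$. This is handled exactly as in the tightness proof of Lemma \ref{L:TightnessControlledProcess}: apply It\^o's formula to $\delta\chi_{\rho}(\bar{Y}^{\epsilon}_s,\gamma)$ with $\rho=\delta^{2}/\epsilon$ and use the auxiliary equation (\ref{Eq:RandomCellProblem}) to rewrite
\[
\bar{X}^{\epsilon,\gamma}_{s}-\bar{X}^{\epsilon,\gamma}_{t}=\int_{t}^{s}\lambda\bigl(\bar{X}^{\epsilon,\gamma}_{r},\bar{Y}^{\epsilon,\gamma}_{r},u^{\epsilon}_1(r),u^{\epsilon}_2(r)\bigr)\,dr-\delta\bigl[\chi_{0}(\bar{Y}^{\epsilon,\gamma}_{s})-\chi_{0}(\bar{Y}^{\epsilon,\gamma}_{t})\bigr]+\sqrt{\epsilon}\,\mathcal{M}^{\epsilon,\gamma}_{t,s},
\]
where $\mathcal{M}^{\epsilon,\gamma}_{t,s}$ is a stochastic-integral remainder. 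Boundedness of $\chi_{0}$ and the It\^o isometry, combined with (\ref{Eq:Ubound}) and the Cauchy--Schwarz estimate $\int_t^{t+h(\epsilon)}\Vert u_i^\epsilon\Vert\,dr\le \sqrt{h(\epsilon)\,N}$, then give an estimate of the form $C(h(\epsilon)+\sqrt{h(\epsilon)}+\delta+\sqrt{\epsilon h(\epsilon)})$ uniformly in $t\in[0,T]$ and $\gamma\in N_\eta$, which vanishes since $h(\epsilon),\delta\downarrow 0$.

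Combining the two pieces yields the claim; no new $h$ needs to be chosen, and the exceptional set is precisely the $N_\eta$ inherited from Lemma \ref{L:ErgodicTheorem3}. The uniform continuity of $\Psi$ in $x$ assumed in Lemma \ref{L:ErgodicTheorem3} is used crucially in the freezing step, and the same assumption ensures continuity of $x\mapsto\bar{\Psi}(t,x)$ so that the limit is well defined as a function of the random variable $\bar{X}^{\epsilon,\gamma}_t$.
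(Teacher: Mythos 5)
The paper's own proof of Lemma \ref{L:ErgodicTheorem4} is a one-line reference to "the standard argument of freezing the slow component" together with Lemma \ref{L:ErgodicTheorem3} and the citations to \cite{FW1} and \cite{PardouxVeretennikov1}, with details omitted. Your proposal is a fleshed-out version of exactly this argument, and the main structural ingredients you identify are the right ones: inherit $N_{\eta}$ and $h(\epsilon)$ from Lemma \ref{L:ErgodicTheorem3}, localize $\bar{X}^{\epsilon,\gamma}$ to a large compact ball, exploit the local uniformity in $x$, and control the modulus of continuity of $\bar{X}^{\epsilon,\gamma}$ over windows of length $h(\epsilon)$ using the corrector $\chi_{\rho}$ exactly as in Lemma \ref{L:TightnessControlledProcess} to tame the $(\epsilon/\delta)\,b$ term.

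There is one point that you treat as if it were free but is in fact a second freezing step. After you replace $\Psi(s,\bar{X}^{\epsilon,\gamma}_{s},\bar{Y}^{\epsilon,\gamma}_{s},\gamma)$ by $\Psi(s,\bar{X}^{\epsilon,\gamma}_{t},\bar{Y}^{\epsilon,\gamma}_{s},\gamma)$, the quantity you call "exactly the object controlled by Lemma \ref{L:ErgodicTheorem3}" still contains the \emph{true} fast process $\bar{Y}^{\epsilon,\gamma}_{s}$ whose drift is $g(\bar{X}^{\epsilon,\gamma}_{s},\cdot)$, not the parametrized process $\bar{Y}^{\epsilon,x,y_{0},\gamma}_{s}$ of Lemma \ref{L:ErgodicTheorem3} whose drift is $g(x,\cdot)$ with $x$ frozen. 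Conditioning on $\bar{X}^{\epsilon,\gamma}_{t}=x$ does not make these coincide: the $x$-argument of $g$ in the $Y$-equation must also be frozen. This is handled in the standard argument by a Girsanov change of measure that replaces $g(\bar{X}^{\epsilon,\gamma}_{s},\cdot)$ with $g(\bar{X}^{\epsilon,\gamma}_{t},\cdot)$ on $[t,t+h(\epsilon)]$; the Radon--Nikodym derivative is close to $1$ because $g$ is Lipschitz in $x$ and your own modulus-of-continuity estimate shows $\sup_{s\in[t,t+h(\epsilon)]}\Vert \bar{X}^{\epsilon,\gamma}_{s}-\bar{X}^{\epsilon,\gamma}_{t}\Vert$ is small, and this uses the same $L^{2}$-martingale/Scheff\'{e} machinery already deployed in the proof of Lemma \ref{L:ErgodicTheorem2}. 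With this extra (routine) step added, your sketch matches what the paper is invoking; without it, the reduction to Lemma \ref{L:ErgodicTheorem3} is not yet complete.
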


\begin{proof}
[Sketch of proof of Lemma \ref{L:ErgodicTheorem4}]
  Due to Lemma \ref{L:ErgodicTheorem3}, the statement follows by using the standard argument of freezing the slow component, see for example Chapter 7.9 of \cite{FW1} or \cite{PardouxVeretennikov1}. Details are omitted.
\end{proof}

\section{Acknowledgements}
The author would like to thank Paul Dupuis for discussions on aspects of this work.  The author was partially supported by the National Science Foundation (DMS 1312124).

\end{document}